\definecolor{red}{rgb}{0.7,0.15,0.15}
\definecolor{green}{rgb}{0,0.5,0}
\definecolor{blue}{rgb}{0,0,0.7}
\makeatletter \@addtoreset{equation}{section}
\newtheorem{theorem}{Theorem}[section]
\newtheorem{assumption}[theorem]{Assumption}
\newtheorem{corollary}[theorem]{Corollary}
\newtheorem{lemma}[theorem]{Lemma}
\newtheorem{proposition}[theorem]{Proposition}
\newtheorem{definition}[theorem]{Definition}
\newtheorem{remark}[theorem]{Remark}
\def \E{\mathbb{E}}
\def \F{\mathbb{F}}
\def \L{\mathbb{L}}
\def \N{\mathbb{N}}
\def \O{\mathbb{O}}
\def \P{\mathbb{P}}
\def \Q{\mathbb{Q}}
\def \R{\mathbb{R}}
\def \X{\mathbb{X}}
\def \Y{\mathbb{Y}}
\def\Ac{{\cal A}}
\def\Bc{{\cal B}}
\def\Cc{{\cal C}}
\def\Fc{{\cal F}}
\def\Gc{{\cal G}}
\def\Mc{{\cal M}}
\def\Pc{{\cal P}}
\def\Sc{{\cal S}}
\def\Tc{{\cal T}}
\def\Wc{{\cal W}}
\def\Xc{{\cal X}}
\def\Yc{{\cal Y}}
\def\Zc{{\cal Z}}
\def\Yb{{\bar Y}}
\def\O{{\Omega}}
\def\o{{\omega}}
\def\cplc{{\rm Cpl_c}}
\def\cplac{{\rm Cpl_{ac}}}
\def\cplbc{{\rm Cpl_{bc}}}
\def\cplmc{{\rm Cpl_{mc}}}
\def\cpl{{\rm Cpl}}
\def \tY{\widetilde{Y}}
\author{Daniel {\sc Kr\v{s}ek} \footnote{ETH Z\"urich, Department of Mathematics, Zurich, Switzerland, daniel.krsek@math.ethz.ch, ORCID: 0009-0001-6996-4874} \and Gudmund {\sc Pammer} \footnote{ETH Z\"urich, Department of Mathematics, Zurich, Switzerland, gudmund.pammer@math.ethz.ch, ORCID: 0000-0003-2494-8739}}
\title{General duality and dual attainment for adapted transport}
\date{\today}	
\begin{document}
\maketitle

\begin{abstract}
We investigate duality and existence of dual optimizers for several adapted optimal transport problems under minimal assumptions. 
This includes the causal and bicausal transport, the causal and bicausal barycenter problem, and a multimarginal problem incorporating causality constraints. Moreover, we characterize polar sets in the causal and bicausal setting and discuss applications of our results in robust finance. We consider a non-dominated model of several financial markets where stocks are traded dynamically, but the joint stock dynamics are unknown. We show that a no-arbitrage assumption naturally leads to sets of multicausal couplings. Consequently, computing the robust superhedging price is equivalent to solving an adapted transport problem, and finding a superhedging strategy means solving the corresponding dual.
\vspace{0.5cm}

{\bf Keywords:} Adapted transport, duality, dual attainment, robust superhedging, Bellman principle.
\end{abstract}

\section{Introduction} \label{sec:intro}

\medskip Being an integral part of modern optimal transport, the duality theory traces its roots back to the 1940s with the seminal work of \citeauthor*{ka42} \cite{ka42}, subsequently extended by \citeauthor*{KaRu58} \cite{KaRu58}, \citeauthor*{Du76} \cite{Du76}, \citeauthor*{Ke84} \cite{Ke84}, \citeauthor*{KnSm84} \cite{KnSm84}, \citeauthor*{Br87} \cite{Br87,Br91}, \citeauthor*{GaMc96} \cite{GaMc96}, and many others. It has gained the interest of the mathematical community and evolved into a well-studied tool. Its numerous applications include the study of quantitative properties of optimal transport plans, stability of optimal transport problems, and it finds use in fields such as statistics, computer science, machine learning, and image processing. In this work, we study the dual problems of several versions of adapted optimal transport problems and provide duality results and show dual attainment in a very general setting. We further discuss applications of our results in robust hedging and provide additional intuition using the dynamic programming approach.

\medskip
Starting with the seminal articles of \citeauthor*{Ly95} \cite{Ly95}, \citeauthor*{AvLePa95} \cite{AvLePa95} and \citeauthor*{SoToZh11} \cite{SoToZh11} on markets with uncertain volatilities, \citeauthor*{Ho98a} \cite{Ho98a} and \citeauthor*{Co06} \cite{Co06} on robust hedging and pricing,
robust finance has emerged as an important counterpart complementing the classical theory of mathematical finance, and significantly improved our understanding of model uncertainty. 
Due to the vast number of contributions to the field in recent years, we can not do justice to all, but refer readers to 
\citeauthor*{GaLaTo} \cite{GaLaTo}, \citeauthor*{BeHePe12} \cite{BeHePe12}, \citeauthor*{TaTo13} \cite{TaTo13}, \citeauthor*{dolinsky2014robust} \cite{dolinsky2014robust}, \citeauthor*{BiBoKaNu14} \cite{BiBoKaNu14}
and the references therein.
General frameworks for model uncertainties were introduced in \citeauthor*{BoNu13} \cite{BoNu13} for discrete-time financial markets.
They establish  in great generality a robust analogue of the first fundamental theorem of asset pricing: the absence of arbitrage in a quasi-sure sense entails the existence of a suitable family of martingale measures.
Building on this, they prove existence of optimal superhedging strategies as well as a robust superhedging duality. 
In this work, we consider a particular setting that falls well within the framework of \citeauthor*{BoNu13} \cite{BoNu13} and give interpretation to an adapted multimarginal optimal transport problem as a robust hedging problem.

\medskip
In recent years, adapted transport has become increasingly popular as a means to measure distances between stochastic processes.
At least in a discrete-time seting, the adapted Wasserstein distance turns out to be the correct distance for most purposes that involve stochastic processes and their filtration.
Applications of adapted transport and topologies include:
Continuity of optimal stopping \citeauthor*{BaBaBeEd19a} \cite{BaBaBeEd19a} and multistage stochastic optimization problems \citeauthor*{PfPi12} \cite{PfPi12}, pricing, hedging, and utility indifference pricing \citeauthor*{BaBaBeEd19b} \cite{BaBaBeEd19b}, Lipschitz continuity of the Doob decomposition \citeauthor*{BaBePa21} \cite{BaBePa21}, sensitivity analysis of convex stochastic optimization problems \citeauthor*{BaWi23} \cite{BaWi23}, stability of martingale optimal transport \citeauthor*{JoPa23} \cite{JoPa23}, enlargement of filtrations \citeauthor*{AcBaZa20} \cite{AcBaZa20}, interest rate uncertainty \citeauthor*{AcBePa20} \cite{AcBePa20}, higher-rank expected signatures \citeauthor*{BoLiOb23} \cite{BoLiOb23}, and their application in the context of machine learning for filtration-dependent problems \citeauthor*{HoLeLiLySa23} \cite{HoLeLiLySa23}. Additionally, they find use in solving dynamic matching problems, see \citeauthor*{BaHa23} \cite{BaHa23} and \citeauthor*{AcKrPa23a} \cite{AcKrPa23a}.

\medskip In adapted optimal transport, one can derive a dual problem similar to that of Kantorovich and Rubinstein. The first result on duality was established in the work of \citeauthor*{BaBeLiZa17} \cite{BaBeLiZa17}, and it has been further used
or extended in several other works, \emph{e.g.} \citeauthor*{AcBaZa20} \cite{AcBaZa20}, \citeauthor*{AcBaJi20} \cite{AcBaJi20}.

\medskip
As of now, there has been a lack of results regarding general duality and dual attainment for the adapted versions of optimal transport. The main challenge we face is the relatively complex nature of the dual potentials. Firstly, these involve martingale terms with a specific structure, further dependent on the transition kernels of the given marginals, as well as variables from both (or, in the multimarginal case, all) involved spaces. Since there is no fixed coupling of the marginals, we
lack a canonical measure on the product space and the set of admissible couplings generally contains mutually singular measures. Consequently, standard arguments using Komlós' lemma are not sufficient. Moreover, unlike in entropic optimal transport, as considered in the adapted case \emph{e.g.}\ in the work of \citeauthor*{EcPa22} \cite{EcPa22}, the dual of the unregularized adapted optimal transport problem requires pointwise constraints on the potentials, which we need to verify. The complexity increases when one considers the dual of the adapted versions of the barycenter problem, where even more intricate constraints on the potentials are imposed.

\medskip
Our main contribution is the proof of duality and the existence of dual optimizers for both causal and bicausal optimal transport, as well as for the causal and bicausal barycenter problem and a multimarginal problem with causality constraints between the marginals. 
Notably, we establish these results with minimal assumptions on the cost function and the transition kernels. A slight exception here are the results for barycenters, where certain regularity is imposed to obtain duality, but can be dropped when showing attainment of the dual. As a consequence, our results cover essentially any sensible framework. Nonetheless, it is important to note that while our method allows for an extremely general setting, in general we do not expect any regularity from the potentials either. The main approach we employ involves the Choquet capacitability theorem, originally used by \citeauthor*{Ke84} \cite{Ke84} for classical optimal transport, together with showing that a maximizing sequence of potentials admits measurable convex combinations of a subsequence converging to a limit. As mentioned earlier, the dual potentials depend in a measurable way on variables potentially from several spaces. Since there is no reference measure, we cannot employ the Komlos' lemma for all potentials. We instead show that there exist measurable convex combinations that converge to a limit. This, however, has to be done in an inductive way forward in time to ensure the right structure of the limits. A slightly different approach is employed in the causal barycenter problem, where our method can only be used to show dual attainment with martingale compensators depending on a chosen barycenter candidate. To aggregate these over all candidates, we additionally assume the continuum hypothesis and apply transfinite induction. This aligns with similar results in the literature that deal with non-dominated sets of measures. Moreover, we provide a complete description of the polar sets in causal and bicausal optimal transport, an interpretation of the dual problem in the context of the robust superhedging problem and further give intuition of the dual optimizers using Bellman's optimality principle. We also refer to \cite{AcKrPa23a} where the dual problem naturally appears when one studies equilibria in dynamic matching models.

\medskip
The remainder of the paper is organized as follows. The results in \Cref{sec:main_results} are presented in a significantly simplified setting for ease of exposition and will later be largely generalized. Specifically, in \Cref{sec:AOT}, the adapted optimal transport is introduced and in \Cref{sec:multi,sec:bary}, we formulate the multicausal transport and the barycenter problems, presenting our main results. In \Cref{sec:appl}, we apply the theory to robust hedging and also address the dynamic programming approach. In doing so, we offer different points of view and interpretations for the dual problem. Finally, in \Cref{sec:general}, we formulate the problems in full generality and provide the proofs. \Cref{sec:appendix} then contains technical lemmata and supporting results.

\medskip
\textbf{Notations:} We denote by $\N$ the set of positive integers and $\R$ the set of real numbers. If $(\Omega,\Fc)$ is a measurable space, we write $\Pc(\Omega,\Fc)$ for the set of all probability measures on $(\Omega,\Fc)$. If $\Omega$ is a Polish space and $\Fc$ is the corresponding Borel $\sigma$-algebra, we write $\Pc(\Omega)=\Pc(\Omega,\Fc)$ for brevity, and equip $\Pc(\Omega)$ with the topology of weak convergence of measures. Let $\Pc \subseteq \Pc(\Omega,\Fc).$ We say that a statement holds $\Pc$--quasi-surely, abbreviated to $\Pc$--q.s., if it holds outside of a set $A \in\Fc$ satisfying $\P(A)=0$ for every $\P \in \Pc.$ For $\P \in \Pc(\Omega,\Fc)$ and $p \geq 1$, we denote by $\L^p(\Fc,\P)$ the set of all real-valued random variables on $(\Omega,\Fc)$ with finite $p$-th moment. If the $\sigma$-algebra $\Fc$ is obvious from the context, we shall write $\L^p(\P)$ instead for brevity. If $(Y,\Yc)$ is a measurable space and $f : \Omega \longrightarrow Y$ is measurable, we write $f_{\#}\P \in \Pc(Y,\Yc)$ for the push-forward of $\P$ under $f$. Further, if $N \in \N,$ we denote the elements of the canonical basis of $\R^N$ by $e_i.$ For $T \in \N$ and some given elements $x_s \in \R,$ $s \in \{1,\ldots,T\},$ we use the notation $x_{t:u}\coloneqq (x_t,x_{t+1},\ldots,x_u),$ $1\leq t \leq u \leq T.$ Let $\mu \in \Pc(\R^T)$, we say that $\mu_1 \in \Pc(\R),$ $K_t : \R^{t-1} \longrightarrow \Pc(\R),$ $t \in \{2,\ldots,T\}$ are the successive disintegrations of $\mu$ if it admits the disintegration
\[ \mu(\mathrm{d}x_1,\ldots,\mathrm{d}x_T)=\mu_1(\mathrm{d}x_1) \otimes K_2(x_1;\mathrm{d}x_2)\otimes\ldots\otimes K_T(x_{1:T-1};\mathrm{d}x_T).\]
Finally, for given sets $A^i$ and $x^i \in A^i$, $i \in \{1,\ldots,N\}$, we use the shorthand notation $x^{1:N} \coloneqq (x^1,\ldots,x^N) \in A^{1:N}\coloneqq\prod_{i=1}^N A^i.$ Similar notation shall be used for other indices.

\section{Main results} \label{sec:main_results}

In this section, we introduce the main results in a simplified setting for the ease of exposition. 
All statements will be rigorously formulated and the involved sets will be properly introduced in \Cref{sec:general}, where we also refer for more discussion and details.

\subsection{Adapted optimal transport} \label{sec:AOT}

Adapted optimal transport heavily borrows ideas from optimal transport, which we now briefly introduce.
Let us consider two probability measures on $\R^T$, say $\mu,\,\nu \in \mathcal P(\mathbb R^T)$, for some $T \in \N$.
Given a cost function $c : \R^T \times \R^T \longrightarrow \R$, the task is to find an efficient way of moving $\mu$ to $\nu$, where the transportation aspect is formalized via transport maps.
A transport map $S : \R^T \longrightarrow \R^T$ from $\mu$ to $\nu$ is a measurable map that satisfies the pushforward constraint $S_\# \mu = \nu$.
Loosely speaking, all mass that $\mu$ puts at $x$ is moved to $S(x)$, which incurs the cost $c(x,S(x))$ and thereby the total transport cost is $\int c(x,S(x)) \mu(\mathrm{d}x)$.
As the concept of transport maps has its natural limitations, one considers couplings, which can be understood as randomized transport plans.
A coupling $\pi \in \Pc(\R^T \times \R^T)$ between $\mu$ and $\nu$ is a probability measure such that $\pi( \mathrm{d}x \times \R^T) = \mu( \mathrm{d} x)$ and $\pi(\R^T \times \mathrm{d}y) = \nu(\mathrm{d}y)$ and incurs the transport cost $\int c(x,y)  \pi(\mathrm{d}x,\mathrm{d}y)$.
The set of all couplings between $\mu$ and $\nu$ is denoted by $\cpl(\mu,\nu)$.

\medskip
Adapted transport extends this approach to stochastic processes. Let us view $\R^T$ as a path space for real-valued paths in $T$ time steps, and $\mu$, $\nu$ as laws of processes.
Instead of considering all transport maps, we restrict to those $S : \R^T \longrightarrow \R^T$ with $S_\# \mu = \nu$ that are adapted, also sometimes referred to as causal. That is to say,
\begin{equation}
    \label{eq:adapted.map}
    S(x_1,\ldots,x_T) = (S_1(x_1), \dots, S_T(x_1,\ldots,x_T)), \quad (x_1,\ldots,x_T) \in \mathbb R^T,
\end{equation}
where $S_t : \R^t \longrightarrow \R$ for $t \in \{ 1,\ldots,T \}$.
We remark that in \eqref{eq:adapted.map} the value of $S$ at time $t$ does not depend on the future evolution of the path $(x_1,\ldots,x_T)$.
In this sense, $S$ is adapted to the available information at time $t$.
As above, we relax the problem by considering couplings, see for example \citeauthor*{BePaSc21c} \cite{BePaSc21c} for a recent study.
For this reason, we have to adequately translate \eqref{eq:adapted.map}.
A coupling $\pi$ between $\mu$ and $\nu$ is called causal if
\begin{equation}
    \label{eq:causal.coupling}
    (X_1,\ldots,X_T) \text{ is } \text{independent of } (Y_1,\ldots,Y_t) \text{ given }(X_1,\ldots,X_t),
\end{equation}
where $(X,Y)$ is distributed according to $\pi$, for all $t \in \{ 1,\ldots,T-1 \}$.
Moreover, we call $\pi$ bicausal if \eqref{eq:causal.coupling} also holds true when the roles of $X$ and $Y$ are reversed.
The set of all causal, resp.\ bicausal, couplings between $\mu$ and $\nu$ is denoted by $\cplc(\mu,\nu),$ resp.\ $\cplbc(\mu,\nu)$.
The set of dual variables $\Sc^{\rm c}(\mu,\nu),$ resp.\ $\Sc^{\rm bc}(\mu,\nu),$ is a subset of real-valued measurable functions on $\R^T \times \R^T$ which have a certain martingale property under couplings in $\cplc(\mu,\nu),$ resp.\ $\cplbc(\mu,\nu)$.
They will be rigorously defined in \Cref{sec:general_adapt_causal}.
As our main contribution to adapted transport we establish duality as well as dual attainment:
\begin{theorem} \label{thm:intro_causa_adapt}
    Let $c : \R^T \times \R^T \longrightarrow \R \cup \{ - \infty \}$ be measurable and bounded from above.
    Then
    \begin{align} \label{eq:thm.cbc.intro.1}
        \Cc\Wc_c(\mu,\nu) \coloneqq& \inf_{\cpl_{\rm c}(\mu,\nu)}
        \int c(x,y)  \pi(\mathrm{d}x,\mathrm{d}y) = 
        \sup_{s \in \mathcal S^{\rm c}(\mu,\nu),\,s \leq c}
        \int s(x,y) (\mu \otimes \nu)(\mathrm{d}x,\mathrm{d}y), \\ \label{eq:thm.cbc.intro.2}
        \Ac\Wc_c(\mu,\nu) \coloneqq& \inf_{\cpl_{\rm bc}(\mu,\nu)}
        \int c(x,y)  \pi(\mathrm{d}x,\mathrm{d}y) = 
        \sup_{s \in \mathcal S^{\rm bc}(\mu,\nu),\,s \leq c}
        \int s(x,y) (\mu \otimes \nu)(\mathrm{d}x,\mathrm{d}y).
    \end{align}
    Moreover, if either side in \eqref{eq:thm.cbc.intro.1}, resp.\ \eqref{eq:thm.cbc.intro.2}, is finite, then the respective right-hand side is attained.
\end{theorem}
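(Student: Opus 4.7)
The proof naturally splits into three parts: weak duality (``$\geq$''), strong duality (``$\leq$''), and dual attainment. I focus on the causal statement \eqref{eq:thm.cbc.intro.1}; the bicausal case \eqref{eq:thm.cbc.intro.2} follows by the same method, with the conditional-independence constraint imposed symmetrically on both coordinates and a correspondingly enlarged dual class $\Sc^{\rm bc}(\mu,\nu)$.

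Weak duality is direct. For $s\in\Sc^{\rm c}(\mu,\nu)$ with $s\leq c$ and $\pi\in\cpl_{\rm c}(\mu,\nu)$, the martingale increments in the definition of $\Sc^{\rm c}(\mu,\nu)$ integrate to zero both under $\mu\otimes\nu$, by independence, and under $\pi$, by the causal conditional-independence \eqref{eq:causal.coupling}. Hence
\[ \int s\,d(\mu\otimes\nu)\;=\;\int s\,d\pi\;\leq\;\int c\,d\pi, \]
so taking sup on the left and inf on the right yields the ``$\geq$'' inequality in \eqref{eq:thm.cbc.intro.1}.

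For the non-trivial ``$\leq$'' direction I would first handle the case of bounded upper semicontinuous $c$: here $\cpl_{\rm c}(\mu,\nu)$ is weakly compact, an optimizer $\pi^{\ast}$ exists, and a matching potential $s^{\ast}\in\Sc^{\rm c}(\mu,\nu)$ can be produced by a time-backwards recursion based on the successive disintegrations of $\mu$ and $\nu$. The extension from bounded upper semicontinuous to arbitrary measurable $c$ bounded from above would then proceed via the Choquet/Kellerer route mentioned in the introduction: one checks that $c\mapsto -\Cc\Wc_c(\mu,\nu)$ has the structural properties of a Choquet capacity on $\R^T\times\R^T$, so the capacitability theorem permits value-preserving approximation of measurable bounded-above costs by upper semicontinuous ones from below, and strong duality is inherited in the limit.

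Dual attainment is the substantive step. Fix a maximizing sequence $s_n\in\Sc^{\rm c}(\mu,\nu)$, $s_n\leq c$. Since $\cpl_{\rm c}(\mu,\nu)$ generically contains mutually singular measures, there is no reference measure on $\R^T\times\R^T$ and every limiting operation must be carried out $\cpl_{\rm c}(\mu,\nu)$--q.s. I would build the limit inductively forward in $t=1,\ldots,T$: at step $t$ the $(x,y_{1:t-1})$-variables are frozen as a parameter, $s_n$ is disintegrated along the natural time structure, and a parameter-measurable variant of Koml\'os' lemma is applied to replace $s_n$ by finite convex combinations whose sections converge in the remaining variable almost surely with respect to the corresponding successive disintegration of $\nu$, while depending measurably on the frozen parameter. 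Convexity of $\Sc^{\rm c}(\mu,\nu)$ and linearity of the objective transfer to the limit, so the resulting $s_\infty$ lies in $\Sc^{\rm c}(\mu,\nu)$, satisfies $s_\infty\leq c$ quasi-surely, and attains the dual value. The principal obstacle is precisely this step: one must simultaneously preserve the intricate causal-martingale structure of the potentials, retain joint measurability across spatial variables and the frozen parameter, and avoid any appeal to a fixed measure on $\R^T\times\R^T$ --- which is exactly what forces the inductive time-forward construction combined with a parametric Koml\'os argument.
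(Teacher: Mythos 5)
Your weak-duality argument and your outline of the attainment step essentially match the paper's proof: there too, the dual optimizer is obtained by a forward-in-time induction in which the past variables are frozen as a parameter and a parametric Koml\'os-type lemma (\Cref{komlos}) produces measurably weighted convex combinations converging quasi-surely, the necessary one-sided bounds being extracted by integrating the constraint $s^n\le c^n\le 0$ against suitable kernels. One correction: in the causal case the relevant almost-sure convergence is with respect to the successive disintegrations of $\mu$ (whose kernels define the compensators in $\Sc^{\rm c}$), not those of $\nu$.

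The strong-duality step, however, contains a genuine gap. For a bounded \emph{upper} semicontinuous $c$ the map $\pi\mapsto\int c\,\mathrm d\pi$ is upper semicontinuous, so its \emph{infimum} over the weakly compact set $\cplc(\mu,\nu)$ need not be attained; primal attainment and the standard minimax duality require \emph{lower} semicontinuity, and indeed the paper's base case (\Cref{thm:caus_dual}, via \cite{EcPa22}) is duality for l.s.c.\ costs, with measurable costs then approximated from above. There is no off-the-shelf duality theorem for u.s.c.\ costs to launch your inner approximation from, and the proposed ``time-backwards recursion'' producing a matching $s^\ast\in\Sc^{\rm c}(\mu,\nu)$ is itself an unproven dual-attainment claim: it would require measurable, parameter-dependent dual attainment for each one-step problem and a verification that the assembled potentials have the integrability demanded by $\Sc^{\rm c}$. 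More structurally, in the paper the Choquet capacitability theorem is applied to the \emph{dual} value $D$ as well as to $V$, and its decisive hypothesis is continuity of $D$ from above along decreasing sequences of measurable costs --- which is proved by exactly the Koml\'os construction you postpone to the attainment step. Duality for measurable $c$ and dual attainment are therefore not separable in the way your outline suggests: the hard work in your last paragraph is also what your middle step needs, and checking capacity properties of the primal value alone does not close it.
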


\begin{remark} $(i)$ Here, our main contribution
is the proof of duality for measurable cost functions, as well as the attainment of the dual. So far, there have only been results concerning duality under certain continuity assumptions:
{\rm\citeauthor*{BaBeLiZa17} \cite{BaBeLiZa17}} deal with the causal setting requiring lower-semicontinuity and boundedness from below of the cost function as well as weak continuity of the successive disintegration kernels associated with the marginals.
{\rm\citeauthor*{EcPa22} \cite{EcPa22}} show duality for the causal and bicausal optimal transport problems assuming lower semicontinuity and boundedness from below of the cost.
In contrast to that, {\rm\Cref{thm:intro_causa_adapt}} establishes duality for general measurable cost functions that are bounded from above.
Moreover, to the best of our knowledge, this is the first result treating the attainment of the dual problem in adapted optimal transport.

\medskip $(ii)$ The assumption of having an upper bound of the cost function in {\rm\Cref{thm:intro_causa_adapt}} can be relaxed to being upper-bounded by integrable functions. We refer to {\rm\Cref{existDualC,existDualBC}} for details.

\medskip $(iii)$ While the dual problems in \eqref{eq:thm.cbc.intro.1} and \eqref{eq:thm.cbc.intro.2} are attained, in this generality the primal problems are not necessarily attained.
\end{remark}

\begin{remark} 
    As mentioned earlier, the dual problem is of importance when studying the properties of the primal problem and primal optimizers. It is thus natural to investigate its attainment. Moreover, motivated by standard optimal transport, see \emph{e.g.}\ {\rm \citeauthor*{Ke84} \cite{Ke84}} and {\rm \citeauthor*{BeLeSc12a} \cite{BeLeSc12a}}, measurability of the cost function is a natural condition to obtain duality.
    In light of this, the assumption on lower-semicontinuity, as considered in {\rm\cite{BaBeLiZa17, EcPa22}}, appears to be overly restrictive.
\end{remark}

\subsection{Multimarginal adapted transport} \label{sec:multi}
In this section we address the multimarginal generalization of adapted transport which requires us to introduce appropriate notions of causality specific to the setting.
This refinement is advantageous in the study of adapted Wasserstein barycenters and the robust hedging problem, as discussed in \Cref{sec:robust} below. 

\medskip
Let us consider $N$ processes with laws $(\mu^1,\ldots,\mu^N) \in \Pc(\R^T)^N.$
Naturally, the set of couplings on $(\R^T)^N$ with marginals $(\mu^1,\ldots,\mu^N)$ is denoted by $\cpl(\mu^1,\ldots,\mu^N)$.
\begin{definition}
A coupling $\pi \in \cpl(\mu^1,\ldots,\mu^N)$ is called multicausal if, for every $i \in \{1,\ldots,N\}$ and $t \in \{1,\ldots, T\}$,
\begin{equation}
    (X_1^i,\ldots,X_T^i) \text{ is } \text{independent of }\{ (X^j_1,\ldots, X^j_t) : j \neq i \} \text{ given } (X^i_1,\ldots,X^i_t),
\end{equation}
where $(X^1,\ldots,X^N)$ is a vector of $\R^T$-valued random variables distributed according to $\pi$.
The set of multicausal couplings is denoted by $\cpl_{\rm mc}(\mu^1,\ldots,\mu^N)$.
\end{definition}

\begin{remark} \label{rem:J_causal} We note that in the case $N=2,$ the notion of multicausality coincides with bicausality.
\end{remark}

Analogously to \Cref{sec:AOT}, we write $\mathcal S^{\rm mc}$ for the set of dual variables which consists of real-valued measurable functions on $(\R^T)^N$ that have a certain martingale property under couplings in $\cplmc(\mu^1,\ldots,\mu^N)$.
A proper definition of $\mathcal S^{\rm mc}$ can be found in \Cref{sec:general_multi} below.
Thanks to this set, we have the following general duality result.

\begin{theorem} \label{thm:simple_multi}
    Let $c : \R^{T \cdot N} \longrightarrow \R \cup \{ - \infty\}$ be measurable and bounded from above.
    Then
    \begin{equation}
        \inf_{\pi \in \cpl_{\rm mc}(\mu^1,\ldots,\mu^N)} \int c(x^1,\ldots,x^N) \pi(\mathrm{d}x^1,\ldots,\mathrm{d}x^N) =
        \sup_{s \in \mathcal S^{\rm mc},\, s \le c}
        \int s(x^1,\ldots,x^N)  \Big(\bigotimes_{i = 1}^N \mu^i \Big)(\mathrm{d}x^1,\ldots,\mathrm{d}x^N).
    \end{equation}
    Moreover, if either side is finite, then the right-hand side is attained.
\end{theorem}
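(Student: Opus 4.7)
The plan is to first get weak duality essentially for free, then establish strong duality via a Choquet capacitability argument in the spirit of Kellerer, and finally obtain dual attainment by an inductive, parametric extraction of measurable convex combinations from a maximizing sequence. For weak duality, any $s \in \mathcal{S}^{\mathbf{J}}$ with $s \le c$ is, by construction, a sum of $\mathbf{J}$-martingale increments whose integrals against an arbitrary $\pi \in \cpl_{\mathbf{J}}(\mu^1,\ldots,\mu^N)$ are insensitive to the coupling of the marginals, so
$$\int c\, d\pi \;\ge\; \int s\, d\pi \;=\; \int s\, d\bigotimes_{i=1}^N \mu^i,$$
and passing to $\inf_\pi$ and $\sup_s$ yields $\inf \ge \sup$.

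For the reverse inequality, I would verify that, at fixed marginals and fixed $\mathbf{J}$, the functional
$$\mathcal{C}(c) \coloneqq \inf_{\pi \in \cpl_{\mathbf{J}}(\mu^1,\ldots,\mu^N)} \int c\, d\pi$$
is a Choquet capacity on bounded measurable cost functions: monotone, continuous from below along increasing sequences, and continuous from above along decreasing sequences of bounded upper-semicontinuous functions---the latter following from tightness of $\cpl(\mu^1,\ldots,\mu^N)$ and weak closedness of the linear conditional-independence constraints that cut out $\cpl_{\mathbf{J}}$ inside it. By capacitability it then suffices to prove duality for bounded upper-semicontinuous $c$; in that regime $\cpl_{\mathbf{J}}$ is convex and weakly compact, so a minimax / Hahn--Banach argument applied to the linear $\mathbf{J}$-causality constraints identifies the dual admissible functions as precisely $\mathcal{S}^{\mathbf{J}}$. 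The extension from $c$ bounded above to $c$ dominated by an integrable function (cf.\ the remark) is then routine monotone-class reasoning.

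For attainment, assuming the common value is finite, I would take a maximizing sequence $(s_n)_n \subset \mathcal{S}^{\mathbf{J}}$ with $s_n \le c$ and construct a limit $s_\infty \in \mathcal{S}^{\mathbf{J}}$ with $s_\infty \le c$ achieving the supremum. Each $s_n$ decomposes into martingale components indexed by $(i,t) \in \{1,\ldots,N\} \times \{1,\ldots,T\}$, and since mutually singular $\mathbf{J}$-causal couplings preclude a single reference measure on $(\R^T)^N$, a direct application of Komlós' lemma is not available. Instead, I would proceed inductively forward in time and across marginals: at step $(i,t)$, treat the variables already handled as measurable parameters and apply a parametric Komlós-type lemma to the relevant martingale increment, obtaining measurable convex combinations that converge $\mu^i$-almost surely for $\bigotimes_j \mu^j$-almost every parameter value. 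The main obstacle---and the reason standard arguments fail---lies exactly in this inductive step: one must simultaneously guarantee measurable dependence of the convex-combination coefficients on all parameter coordinates, preserve the $\mathbf{J}$-conditional-independence structure defining $\mathcal{S}^{\mathbf{J}}$ under the limit, and control the $N\cdot T$ null sets that accumulate across the induction so that the resulting convergence upgrades to quasi-sure convergence against every $\pi \in \cpl_{\mathbf{J}}(\mu^1,\ldots,\mu^N)$; the ordering of the induction is in turn dictated by the sets $J^i$.
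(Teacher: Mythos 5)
Your overall architecture (weak duality, Choquet capacitability, parametric Komlós-type extraction of measurable convex combinations with a forward induction in time and across marginals) is the same as the paper's, and your attainment step in particular matches the paper's construction closely, including the key difficulties you identify (measurable dependence of the coefficients on the parameters, preservation of the martingale structure in the limit, and upgrading almost-sure to quasi-sure convergence). However, the duality portion of your argument has a genuine gap in how the capacitability machinery is oriented. You propose to verify that the \emph{primal} functional $\mathcal{C}$ is a capacity and then to reduce to bounded \emph{upper}-semicontinuous costs, for which you invoke a minimax argument. For an infimum over couplings, the soft monotone-convergence continuity of $\mathcal{C}$ is along \emph{decreasing} sequences of arbitrary measurable costs, while continuity along \emph{increasing} sequences (which the capacitability theorem requires on the large class) needs weak compactness plus lower semicontinuity of $\pi \mapsto \int c\,\mathrm{d}\pi$ and is therefore only available for increasing sequences of continuous (hence l.s.c.\ limit) costs; it is not available for general measurable increasing sequences. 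Moreover, duality for bounded u.s.c.\ costs does not follow from a soft minimax: the standard compactness-based duality holds for l.s.c.\ costs, and passing from continuous to u.s.c.\ costs already requires the downward continuity of the \emph{dual} functional $D$, which is exactly the hard step.

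The correct scheme, and the one the paper follows, runs the other way: one shows that both $V$ and $D$ are continuous from below on continuous bounded costs (where they agree by the known l.s.c.\ duality) and continuous from above along decreasing sequences of arbitrary measurable costs; capacitability then yields $D(c)=\inf\{D(\hat c):\hat c\ge c,\ \hat c\ \text{l.s.c.}\}=\inf\{V(\hat c):\hat c\ge c,\ \hat c\ \text{l.s.c.}\}=V(c)$. The continuity from above of $V$ is trivial (exchange of infima), but the continuity from above of $D$ is the substantive analytic input, and it is established by precisely the convex-combination construction you describe --- except that it must be run along a decreasing sequence $c^n\searrow c$ with $1/n$-optimizers $s^n\le c^n$ (not with a maximizing sequence $s_n\le c$ for the fixed limit $c$), so that one obtains simultaneously $D(c^n)\searrow D(c)$ and a dual optimizer for $c$. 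You also leave implicit where the one-sided bounds needed to apply the parametric Komlós lemma come from; in the paper these are extracted by integrating the constraint $s^n\le c^n\le 0$ against the kernels $K^i_t$, which is what makes the whole induction work. With the capacitability argument reoriented towards l.s.c.\ majorants and your attainment construction promoted to the proof of downward continuity of $D$, your proposal becomes the paper's proof.
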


\begin{remark} $(i)$ Similarly to the previous section, our main contribution is duality and dual attainment for general multimarginal optimal transport with a measurable cost function. We refer to {\rm \citeauthor*{AcKrPa23a} \cite{AcKrPa23a}} for potential applications of the `multicausal' transport case.

\medskip $(ii)$ The assumption that the cost is bounded from above can be relaxed to being bounded from above by integrable functions. See {\rm\Cref{existDualMC}} for details.
\end{remark}

\subsection{Causal and bicausal barycenters} \label{sec:bary}
Wassserstein barycenters offer a method of averaging probability measures that stays truthful to the geometry of the underlying base space.
This concept, initially studied by \citeauthor*{AgCa11} \cite{AgCa11}, has found significant applications in machine learning.
In mathematical economics, \citeauthor*{CaEk10} \cite{CaEk10} explored the team matching problem establishing its connection to Wasserstein barycenters.
Expanding upon this, \cite{AcKrPa23a} introduced a dynamic version of the matching problem, replacing the Wasserstein barycenter with the causal Wasserstein barycenter.
This variation of averages takes into account the geometry of the underlying spaces as well as the flow of information of the different processes.

\medskip Again, we fix $N$ laws of stochastic processes $(\mu^1,\ldots,\mu^N) \in \Pc(\R^T)^N$ and a compact set $A=\prod_{t=1}^T A_t \subseteq \R^T$.
For each $i$, we have a cost function $c^i : \R^T \times A \longrightarrow \R$.
Then the causal barycenter problem is then the minimization of
\[
    \sum_{i = 1}^N \Cc\Wc_{c^i}(\mu^i,\nu)
\]
over all measures $\nu \in \mathcal P(A)$.
The set of dual variables $\Phi(c^1,\ldots,c^N)$ associated with this problem consists of measurable functions $f^i : \R^T \longrightarrow \R$, such that there are measurable $g^i : A \longrightarrow \R$ and $M^i : \R^T \times A \longrightarrow \R$, $i \in \{1,\ldots,N\},$ where $\sum_{i = 1}^N g^i = 0$ and $M^i$ have a certain martingale property, with
\[
    f^i(x^i) + g^i(y) + M^i(x^i,y) \le c^i(x^i,y)\quad (x^i,y) \in \R^T \times A.
\]
Using the set $\Phi^0(c^1,\ldots,c^N)$, rigorously defined in \Cref{sec:general_bary} in \eqref{eq:def.Phi0}, we establish the following duality result.

\begin{theorem} \label{thm:intro.barycenter}
    Let $A \subseteq \R^T$ be compact and for $i \in \{1,\ldots,N\}$, let $c^i : \R^T \times A \longrightarrow \R$ be measurable and bounded.
    Then
    \begin{equation} \label{eq:thm.intro.barycenter}
        \inf_{\nu \in \Pc(A)} \sum_{i = 1}^N \Cc\Wc_{c^i}(\mu^i, \nu) =
        \sup_{(f^i)_{i = 1}^N \in \Phi^0(c^1,\ldots,c^N)} \sum_{i = 1}^N \int f^i(x^i) \mu^i(\mathrm{d}x^i).
    \end{equation}
     Moreover, the right-hand side is attained.
\end{theorem}

\begin{remark} The duality result for causal barycenters with lower-semicontinuous cost functions was proven in {\rm  \citeauthor*{AcKrPa23a} \cite{AcKrPa23a}}, where we also refer for more discussions. 
Here, we once again relax the assumption on the cost function and show attainment. The assumption on the cost being bounded as well as compactness of $A$ can, similarly as before, be relaxed, see {\rm\Cref{thm:attain_bary}} for details.
\end{remark}

\begin{remark}
    We want to mention that the dual problem to the barycenter problem considered in {\rm\Cref{thm:intro.barycenter}}, resp.\ {\rm\Cref{thm:attain_bary}}, closely resembles a robust version of the collective superreplication problem recently introduced by
    {\rm\citeauthor*{BiDoFoFrMe23} \cite{BiDoFoFrMe23}}.
    We leave exploring this connection for future research.
\end{remark}

Similarly, the bicausal barycenter problem is the minimization of
\[
    \sum_{i = 1}^N \Ac\Wc_{c^i}(\mu^i,\nu)
\]
over all measures $\nu \in \Pc(A)$.
The set of dual variables $\Phi^\Zc(c^1,\dots,c^N)$ associated with this problem consists of measurable functions $f^i : \R^T \longrightarrow \R$, such that there are measurable $g^i : \prod_{t = 1}^T (A_t \times \Pc(A_{t+1:T})) \longrightarrow \R$ and $M^i : \R^T \times \prod_{t = 1}^T (A_t \times \Pc(A_{t+1:T})) \longrightarrow \R$, $i \in \{1,\dots,N\}$, where $\sum_{i = 1}^N g^i = 0$ and $M^i$ have again a certain martingale property, with
\[
    f^i(x^i) + g^i(z_1) + M^i(x^i,z) \le c^i(x^i,y), \quad (x^i,z) \in \R^T \times \prod_{t = 1}^T (A_t \times \Pc(A_{t+1:T})).
\]
For $z_t = (y_t,p_t) \in A_t \times \Pc(A_{t+1:T})$, we interpret $y_t$ as the location of the process at time $t$ and $p_t$ as the predicted future evolution of $y$ at time $t$.
Using the set $\Phi^\Zc(c^1,\dots,c^N)$, rigorously defined in \Cref{sec:bcbary} in \eqref{eq:def.bcbary.potentials}, we establish the following duality result.

\begin{theorem} \label{thm:intro.bcbarycenter}
    Let $A$ be compact and for $i \in \{1,\dots,N\}$, let $c^i : \R^T \times A \longrightarrow \R$ be continuous and bounded.
    Then
    \begin{equation}
        \label{eq:thm.intro.bcbarycenter}
        \inf_{\nu \in \Pc(A)} \sum_{i = 1}^N \Ac\Wc_{c^i}(\mu^i,\nu) = \sup_{(f^i)_{i =1}^N \in \Phi^\Zc(c^1,\dots,c^N)} \sum_{i = 1}^N \int f^i(x^i) \mu^i(\mathrm{d}x^i).
    \end{equation}
    Moreover, the right-hand side is attained.
\end{theorem}

\begin{remark} In comparison to {\rm\Cref{thm:intro.barycenter}}, we require in {\rm\Cref{thm:intro.bcbarycenter}} continuity of the cost functions $c^i$, $i \in \{1,\dots,N\}$.
    The reason behind this slightly more restrictive assumption lies in the different nature of the causal and bicausal barycenter problems.  While for the causal barycenter problem there is no difference in taking the infimum either over $\Pc(A)$ or the larger space ${\rm FP}(A)$ of all filtered processes with paths in $A$, see \eqref{eq:bary.canonicalvsfp}, the very same fails for the bicausal barycenter problem.
    Nonetheless, $\Pc(A)$ can be identified with the set of stochastic processes that are equipped with their generated filtration.
    Since the latter is, according to {\rm \cite[Theorem 1.2]{BaBePa21}}, a dense subset of ${\rm FP}(\R^T)$, we recover \eqref{eq:bary.canonicalvsfp} in the bicausal case under the additional continuity assumption. We treat the general case in {\rm \Cref{sec:general}}, where we relax this assumption as well as the compactness assumption on $A$ by directly working with ${\rm FP}(\R^T)$.
\end{remark}

\section{Applications and discussions} \label{sec:appl}

The aim of this section is to discuss some applications of the results above and provide additional intuition regarding the dual problem. As before, we consider simplified settings for readers' convenience, and remark that most of the results can be further generalized.

\subsection{Robust hedging} \label{sec:robust}
In this section, we present an application of our results to robust hedging. 
This setting falls within the framework considered in the work of \citeauthor*{BoNu13} \cite{BoNu13}. The main aim of this section is thus to link our results to the previously known theory and provide, to the best of our knowledge, a new point of view on adapted optimal transport and the corresponding dual problem.

\medskip
Let $T$ be the time horizon and $N$ be the number of assets whose values are modeled by $\R$-valued processes $X^i=(X^i_t)_{t=1}^T$, $i \in \{1,\ldots,N\},$ with distributions $\mu^i \in \Pc(\R^T)$ under some risk-neutral measure. That is, $X^i$ is an $(\F^{i},\mu^i)$-martingale, where $\F^{i}$ denotes the canonical filtration of $X^i$.  We assume that $X^i$ is the canonical process on $(\R^T,\Bc(\R^T),\F^{i},\mu)$ for simplicity.
We set $X_0^i\coloneqq x_0^i$, where $x_0^i \coloneqq \E^{\mu^i} [X_1^i]$ and denote by $K^{i}_t : \R^{t-1} \longrightarrow \Pc(\R),$ $t \in \{2,\ldots,T\}$ the successive disintegrations of $\mu^i.$ Moreover, we assume that the restricted market consisting of $X^i$ is complete in the sense that for every $(\F^{i},\mu^i)$-martingale $M^i$ there exists an $\F^{i}$-adapted process $\Delta^i=(\Delta^i_t)_{t=0}^{T-1}$ such that for $t \in \{0,\ldots,T\}$
\begin{equation}
    \label{eq:intro.completeness}
    M_t^i =m_0^i +\sum_{s=1}^{t} \Delta^i_{s-1} (X_{s}^i-X_{s-1}^i)\quad \mu^i\text{{\rm--a.s.}}
\end{equation}
with $m_0^i \coloneqq \E^{\mu^i}[M^i_1]$.

\medskip
We assume that the joint distribution of the given processes is unknown. That is, we do not have information about the joint law $\pi \in \cpl(\mu^1,\ldots,\mu^N) \subset \Pc(\R^{T \times N})$ of $(X^1,\ldots,X^N)$. However, it is natural to consider only those $\pi \in \cpl(\mu^1,\ldots,\mu^N)$ which satisfy some appropriate no-arbitrage condition. More specifically, we define the following condition.

\begin{definition} We say that a coupling $\pi \in \cpl(\mu^1,\ldots,\mu^N)$ satisfies condition {\rm(NA)} if the $\R^{N}$-dimensional canonical process $\mathbf{X}\coloneqq(X^1,\ldots,X^N)$ on the product space $\R^{T \cdot N}$ is an $(\F^\mathbf{X},\pi)$-martingale, where $\F^\mathbf{X}$ denotes the filtration on $\R^{T \cdot N}$ generated by the process $\mathbf{X}.$
\end{definition}

\begin{remark} Let us point out that, slightly abusing the notations, we denote by $X^i,$ $i \in \{1,\ldots,N\},$ both the canonical processes introduced above as well as the components of the process $\mathbf{X}.$
\end{remark}

 Let us recall that $\cplmc(\mu^1,\ldots,\mu^N)$ denotes the set of all multicausal couplings introduced in \Cref{rem:J_causal}. We have the following characterization of couplings satisfying condition (NA).

\begin{proposition} \label{prop:NA}
    Let $\pi \in \cpl(\mu^1,\ldots,\mu^N)$. Then $\pi$ satisfies condition {\rm (NA)} if and only if $\pi \in \cplmc(\mu^1,\ldots,\mu^N)$.
\end{proposition}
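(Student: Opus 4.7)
The claim has two implications, and only the converse uses the completeness assumption. For the easy direction $\pi \in \cplmc(\mu^1,\ldots,\mu^N) \Rightarrow (\mathrm{NA})$, I fix $i$ and $t$ and invoke multicausality: $X^i_{t+1}$ is conditionally independent of $\F^\mathbf{X}_t$ given $\F^i_t$. Combining this with the facts that the $i$-th marginal of $\pi$ is $\mu^i$ and that $X^i$ is an $\F^i$-martingale under $\mu^i$, I obtain
\[
\E^\pi[X^i_{t+1}\,|\,\F^\mathbf{X}_t] = \E^\pi[X^i_{t+1}\,|\,\F^i_t] = \E^{\mu^i}[X^i_{t+1}\,|\,\F^i_t] = X^i_t,
\]
so each $X^i$, and thus $\mathbf{X}$ itself, is an $(\F^\mathbf{X},\pi)$-martingale.

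For the converse, a monotone class argument reduces multicausality to showing that for every bounded measurable $\phi : \R^T \to \R$ and every $i,t$, the conditional expectation $\E^\pi[\phi(X^i_{1:T})\,|\,\F^\mathbf{X}_t]$ coincides with $\E^\pi[\phi(X^i_{1:T})\,|\,\F^i_t]$. To this end, I define the $(\F^i,\mu^i)$-martingale $M^i_t \coloneqq \E^{\mu^i}[\phi(X^i_{1:T})\,|\,\F^i_t]$, which is bounded by $\|\phi\|_\infty$. Completeness \eqref{eq:intro.completeness} furnishes an $\F^i$-adapted $\Delta^i$ with
\[
M^i_t = m_0^i + \sum_{s=1}^t \Delta^i_{s-1}(X^i_s - X^i_{s-1}) \quad \mu^i\text{-a.s.,}
\]
and since the identity involves only the $i$-th coordinate, it also holds $\pi$-a.s.

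Under (NA), $X^i$ is an $(\F^\mathbf{X},\pi)$-martingale, $\Delta^i$ is also $\F^\mathbf{X}$-adapted, and the increments $M^i_s - M^i_{s-1} = \Delta^i_{s-1}(X^i_s - X^i_{s-1})$ are bounded $\pi$-a.s. Hence $M^i$ is itself an $(\F^\mathbf{X},\pi)$-martingale, and since $M^i_T = \phi(X^i_{1:T})$ $\pi$-a.s., I deduce
\[
\E^\pi[\phi(X^i_{1:T})\,|\,\F^\mathbf{X}_t] = M^i_t = \E^\pi[\phi(X^i_{1:T})\,|\,\F^i_t],
\]
which is the desired equality. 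The principal subtlety lies in transferring the martingale property of $M^i$ from the marginal $\mu^i$ to the joint law $\pi$; the completeness hypothesis is precisely what enables this lift via the stochastic-integral representation, and without it the converse implication would fail in general.
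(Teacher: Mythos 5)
Your proof is correct and follows essentially the same route as the paper: the implication from multicausality to (NA) is the paper's ``trivial'' direction, and for the converse you use completeness to represent the $(\F^i,\mu^i)$-martingale $\E^{\mu^i}[\phi(X^i)\,|\,\F^i_\cdot]$ as a stochastic integral against $X^i$ and then lift the martingale property to $(\F^\mathbf{X},\pi)$ via (NA), exactly as the paper does with general integrable $\xi^i$. Your choice of bounded test functions $\phi$ plus a monotone class argument is a slightly cleaner packaging of the integrability step the paper dismisses as ``straightforward to verify,'' but it is not a different proof.
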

\begin{proof}
    First, let us assume that $\pi \in \cpl(\mu^1,\ldots,\mu^N)$ is such that $\textbf{X}$ is an $(\F^\mathbf{X},\pi)$-martingale. 
    Let $i \in \{1,\ldots, N\}$ be arbitrary and let $\xi^i \in \L^1(\Fc_T^{i},\mu^i)$. 
    Using the completeness assumption in \eqref{eq:intro.completeness}, there is an $\F^{i}$-adapted process $\Delta^i$ such that 
    \[ 
        \xi^i=p_0+\sum_{t=1}^T \Delta_{t-1}^i (X_{t}^i-X_{t-1}^i), 
    \] 
    with $p_0 \coloneqq \E^{\mu^i}[\xi]$.
    Since $\mathbf{X}$ is an $(\F^\mathbf{X},\pi)$-martingale and $\xi^i$ is integrable, it is straightforward to verify that the process $\sum_{s=1}^\cdot \Delta_{s-1}^i (X_{s}^i-X_{s-1}^i)$ is an $(\F^\mathbf{X},\pi)$-martingale and, thus,
    \[ 
        \E^\pi\big[\xi^i \big\vert \Fc^\mathbf{X}_t\big]=p_0 +\sum_{s=1}^t \Delta_{s-1}^i (X_{s}^i-X_{s-1}^i)=\E^{\mu^i}\big[\xi^i \big\vert \Fc_t^{i}\big].
    \] 
    We have shown that $\xi^i$ is conditionally on $\Fc_t^{i}$ independent of the $\sigma$-algebra $\Fc^\mathbf{X}_t$ under $\pi$. Because $\xi^i \in \L^1(\Fc_T^{i},\mu^i)$ was arbitrary, this proves multicausality.

    \medskip 
    Conversely, if $\pi \in \cplmc(\mu^1,\ldots,\mu^N),$ then for any $i \in \{1,\ldots,N\}$ and $s \leq t$ we have
    \[ \E^{\pi}[ X_t^i \vert \Fc^\mathbf{X}_s]=\E^{\pi}[ X_t^i \vert \Fc^i_s]=X_s^i\quad \pi \text{--a.s},\] where the first equality follows from the fact that $\Fc^i_t$ is conditionally on $\Fc^i_s$ independent of $\Fc^\mathbf{X}_s$ under $\pi.$ Thus, $\mathbf{X}=(X^1,\ldots,X^N)$ is an $(\F^\mathbf{X},\pi)$-martingale and the proof is completed.
\end{proof}

Let us now consider a measurable payoff 
\[\xi : \R^{T\cdot N} \longrightarrow \R\] 
that we wish to hedge.
As mentioned earlier, we assume that the joint dynamics of the market are unknown. However, we are allowed to trade with each of the assets while using information about all assets. That is to say, we introduce the set of admissible trading strategies as follows:
\[ 
    \Ac \coloneqq \Big\{ \mathbf{\Delta}=(\mathbf{\Delta}_t)_{t=0}^{T-1} : \R^{T \cdot N} \longrightarrow \R \;\Big\vert\;  \mathbf{\Delta}\;\text{is}\; \F^\mathbf{X} \text{-adapted}   \Big\}.
\]

\begin{remark} 
    The trading strategy $\mathbf{\Delta}$ being $\F^\mathbf{X}$-adapted simply means that $\mathbf{\Delta}_t$ is a measurable function of the vector $(X^1_{1:t},\ldots,X^N_{1:t})$ for every $t \in \{0,\ldots,T\}$. 
    That is to say, we choose the hedging strategy while observing the paths of all the processes $X^1,\ldots,X^N$ up to time $t$. 
    Thus, to emphasize this point we sometimes write $\mathbf{\Delta}_t=\mathbf{\Delta}_t(X^1_{1:t},\ldots,X^N_{1:t})$.
\end{remark}

We are interested in the following superhedging problem
\[ p(\xi)\coloneqq \inf \bigg\{ p_0 \in \R \,\bigg\vert\, \exists \mathbf{\Delta} \in \Ac : p_0 + \sum_{t=1}^T \mathbf{\Delta}_{t-1} \cdot (\mathbf{X}_{t}-\mathbf{X}_{t-1}) \geq \xi(\mathbf{X})\; \cplmc(\mu^1,\ldots,\mu^n)\text{\rm --q.s.} \bigg\}.\]

To be able to employ attainment results for the primal and the dual multicausal problem, we assume the following boundedness condition on the payoff $\xi$, see \Cref{existDualMC} and \Cref{prop:multi_attain}.

\begin{assumption} \label{ass:xi_growth} 
    There exist functions $f^i \in \L^1(\Fc_T^{i},\mu^i)$, $i \in \{1,\ldots,N\},$
    such that $\xi(x^1,\ldots,x^N) \leq \sum_{i=1}^N f^i(x^i)$.
\end{assumption}

 We have the following duality result. We postpone the proof to \Cref{sec:general_multi} since it requires further preliminaries.

\begin{theorem}[Superhedging duality] \label{thm:intro.robustsuperhedging}
    Let $\xi$ be a measurable function satisfying {\rm \Cref{ass:xi_growth}}. 
    Then we have
    \[ 
        p(\xi)=\sup_{\P \in \cplmc(\mu^1,\ldots,\mu^N)} \E^{\P}[\xi].
    \]
\end{theorem}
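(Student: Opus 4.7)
The plan is to reformulate the superhedging problem as a multicausal adapted transport problem via \Cref{prop:NA} and then invoke the duality of \Cref{thm:simple_multi} (strictly, the more general integrable version \Cref{existDualMC} together with the attainment \Cref{prop:multi_attain}). The two inequalities $p(\xi) \ge \sup_{\P} \E^{\P}[\xi]$ and $p(\xi) \le \sup_{\P} \E^{\P}[\xi]$ are treated separately.

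For the weak direction, take any admissible $(p_0,\mathbf{\Delta})$ and any $\P \in \cplmc(\mu^1,\ldots,\mu^N)$. By \Cref{prop:NA}, $\mathbf{X}$ is an $(\F^{\mathbf{X}},\P)$-martingale, so the discrete stochastic integral $M_t \coloneqq \sum_{s=1}^{t}\mathbf{\Delta}_{s-1} \cdot (\mathbf{X}_s - \mathbf{X}_{s-1})$ is a $\P$-local martingale. Combining the q.s.\ inequality $p_0 + M_T \ge \xi$ with \Cref{ass:xi_growth} (so that $\xi \le \sum_i f^i(X^i)$ with $\E^{\P}[\sum_i f^i(X^i)] = \sum_i \E^{\mu^i}[f^i] < \infty$), an Ansel--Stricker / Fatou-type argument shows that $M$ is a $\P$-supermartingale, hence $p_0 \ge \E^{\P}[M_T] + \E^{\P}[\xi - M_T - p_0] + p_0$ collapses to $p_0 \ge \E^{\P}[\xi]$. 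Taking the supremum over $\P$ and the infimum over $(p_0,\mathbf{\Delta})$ yields $p(\xi) \ge \sup_{\P}\E^{\P}[\xi]$.

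For the strong direction, apply \Cref{thm:simple_multi} (in its integrable form) to the reward $\xi$, using \Cref{ass:xi_growth} to validate the growth condition. This gives
\[
\sup_{\pi \in \cplmc(\mu^1,\ldots,\mu^N)} \int \xi \, d\pi = \inf_{s \in \mathcal S^{\bf J_4},\, s \ge \xi} \int s \, d(\mu^1 \otimes \cdots \otimes \mu^N).
\]
Every dual variable $s \in \mathcal S^{\bf J_4}$ has, by construction, the shape $s(x^1,\ldots,x^N) = \sum_i f^i(x^i) + R(x^1,\ldots,x^N)$, where $R$ is a finite sum of multicausal martingale terms of the form $g^i_t(x^1_{1:t},\ldots,x^N_{1:t})(x^i_{t+1} - x^i_t)$ and therefore integrates to zero under the product measure. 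The completeness assumption \eqref{eq:intro.completeness} applied to each $f^i \in \L^1(\Fc^i_T,\mu^i)$ provides an $\F^i$-adapted $h^i$ with $f^i(X^i) = \E^{\mu^i}[f^i] + \sum_t h^i_{t-1}(X^i_{1:t-1})(X^i_t - X^i_{t-1})$ $\mu^i$--a.s.; since every $\pi \in \cplmc$ has $i$-th marginal $\mu^i$ and this identity depends only on the $i$-th coordinate, it holds $\cplmc$--q.s. Absorbing the $h^i$-increments into $R$ produces an $\F^{\mathbf{X}}$-adapted strategy $\mathbf{\Delta} \in \Ac$ with initial wealth $p_0 = \sum_i \E^{\mu^i}[f^i] = \int s \, d(\mu^1 \otimes \cdots \otimes \mu^N)$ that $\cplmc$--q.s.\ dominates $\xi$. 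Hence $p(\xi) \le \int s \, d(\otimes_i \mu^i)$; passing to the infimum over $s$ completes the argument.

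The main obstacle is the precise interplay between the abstract dual set $\mathcal S^{\bf J_4}$ from \Cref{sec:general_multi} and the financially meaningful class $\Ac$: one must verify that the multicausal martingale terms in $\mathcal S^{\bf J_4}$ are exactly stochastic integrals of $\F^{\mathbf{X}}$-adapted processes against the marginal martingale increments $X^i_{t+1}-X^i_t$, so that together with the completeness rewriting of the $f^i(x^i)$ pieces they assemble into an admissible trading strategy. A secondary but nontrivial point is the passage between $\mu^i$--a.s., $\pi$--a.s., and $\cplmc$--q.s.\ statements for the stochastic integrals, along with the integrability arguments (Ansel--Stricker) underlying the weak-duality step; the quasi-sure framework means one cannot appeal to a single reference measure and must handle each $\pi \in \cplmc$ separately.
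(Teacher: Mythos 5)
Your overall architecture is the paper's: identify $p(\xi)$ with the dual of the multicausal transport problem and invoke \Cref{existDualMC}. Your weak direction takes a different route (a direct supermartingale/Ansel--Stricker argument under each $\P\in\cplmc(\mu^1,\ldots,\mu^N)$, using \Cref{prop:NA}) from the paper's, which instead maps each admissible $(p_0,\mathbf{\Delta})$ to an admissible dual variable $s=p_0+\sum_t\mathbf{\Delta}_{t-1}\cdot(\mathbf{X}_t-\mathbf{X}_{t-1})$; both are workable, and yours is arguably more explicit about why $p_0\ge\E^{\P}[\xi]$, though you should note that $\xi$ is only bounded \emph{above} by \Cref{ass:xi_growth}, so the one-sided bound needed for the Fatou step has to come from $\E^{\P}[\xi]>-\infty$ (the case $\E^{\P}[\xi]=-\infty$ being trivial).

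The strong direction, however, contains a genuine gap. You assert that every $s\in\Sc^{\mathbf{J}_4}$ has ``by construction'' the shape $\sum_i f^i(x^i)+R$ with $R$ a sum of stochastic integrals $g^i_t(\mathbf{x}_{1:t})(x^i_{t+1}-x^i_t)$. This is false: by \eqref{eqn:dual_potent_multi}, the martingale terms of a dual variable are general kernel-compensated increments $a^i_t(x^i_{1:t},(x^j_{1:t-1})_{j\neq i})-\int a^i_t(x^i_{1:t-1},\tilde{x}^i_t,(x^j_{1:t-1})_{j\neq i})\,K^i_t(x^i_{1:t-1};\mathrm{d}\tilde{x}^i_t)$ as in \eqref{eqn:martingale}, which are in no way stochastic integrals against $X^i$ a priori, and they depend on the other coordinates. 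Converting them into $\Delta^i_{t-1}(\mathbf{x}_{1:t-1})(x^i_t-x^i_{t-1})$ is exactly where the market-completeness assumption \eqref{eq:intro.completeness} must be spent: one applies it to the $(\F^i,\mu^i)$-martingale $M^i_\cdot(\cdot,(x^j)_{j\neq i})$ for each \emph{fixed} value of the other coordinates, and then argues by forward induction in time that the resulting integrand $\Delta^i_{t-1}$ is a jointly measurable function of $\mathbf{x}_{1:t-1}$, hence an element of $\Ac$. You instead apply completeness only to the $f^i(x^i)$ pieces (whose analogues in the actual dual set are merely functions $f^i_1(x^i_1)$ of the first coordinate) and relegate the real crux to an acknowledged but unresolved ``main obstacle.'' Without that step the passage from a dual optimizer to an admissible trading strategy, and hence the inequality $p(\xi)\le\sup_{\P}\E^{\P}[\xi]$, is not established.
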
 

\begin{theorem}[Existence] \label{thm:intro.robustsuperhedging_existence}
    If $\xi$ is measurable and satisfies {\rm\Cref{ass:xi_growth}}, then the problem $p(\xi)$  admits a solution. If further $\xi$ is  upper-semicontinuous and {\rm\Cref{ass:xi_growth}} is satisfied by $-\xi$, then $\sup \{ \E^{\P}[\xi] : \P \in \cplmc(\mu^1,\ldots,\mu^N) \}$ is attained. That is to say, in such a case, there exist a coupling $\pi^\star \in \cplmc(\mu^1,\ldots,\mu^N)$, a superhedging strategy $\Delta^\star \in \Ac$, and an initial capital $p_0^\star \in \R$ such that $p_0^\star=p(\xi)$ and 
    \[
        p_0^\star + \sum_{t=1}^{T} \Delta_{t-1}^\star \cdot (\mathbf{X}_{t}^i-\mathbf{X}_{t-1}^i) \geq \xi(\mathbf{X})\quad\cplmc(\mu^1,\ldots,\mu^N)\text{{\rm--quasi-surely,}}
    \]
    and equality holds $\pi^\star$--almost surely.
    In particular, $p(\xi)=\E^{\pi^\star}[\xi]$.
\end{theorem}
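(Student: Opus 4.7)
The plan is to deduce both existence statements directly from primal and dual attainment for the multicausal optimal transport, using the explicit translation between dual admissible potentials and superhedging strategies already carried out in the proof of Theorem~\ref{thm:intro.robustsuperhedging}.

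For the first statement, we use Theorem~\ref{thm:intro.robustsuperhedging} to identify $p(\xi)$ with the multicausal primal value $\sup_{\P \in \cplmc(\mu^1,\ldots,\mu^N)} \E^\P[\xi]$, which is finite by Assumption~\ref{ass:xi_growth}. Applying the dual attainment part of Theorem~\ref{existDualMC} yields functions $a_t^i$ realising the dual supremum, i.e.\ satisfying inequality~\eqref{eqn:martingale} with $p_0 = p(\xi)$. Following the construction in the proof of Theorem~\ref{thm:intro.robustsuperhedging}, we transform these potentials into the $(\F^{i},\mu^i)$-martingales $M^i$, invoke completeness~\eqref{eq:intro.completeness} to produce $\F^{i}$-adapted integrands $\Delta^i$, and verify inductively forward in time that each $\Delta_s^i$ is a measurable function of $\mathbf{x}_{1:s}$. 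Setting $\mathbf{\Delta}^\star \coloneqq (\Delta^1,\ldots,\Delta^N) \in \Ac$ and $p_0^\star \coloneqq p(\xi)$ would yield the desired superhedging optimizer.

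For the second statement, when $\xi$ is lower-semicontinuous and both $\xi$ and $-\xi$ satisfy Assumption~\ref{ass:xi_growth}, we invoke the primal attainment part of Proposition~\ref{prop:multi_attain}. Tightness of $\cplmc(\mu^1,\ldots,\mu^N)$ together with upper-semicontinuity of $\pi \mapsto \E^\pi[\xi]$ along a maximizing sequence — provided by the lower-semicontinuity of $\xi$ and the two-sided growth bound — yields $\pi^\star \in \cplmc(\mu^1,\ldots,\mu^N)$ with $\E^{\pi^\star}[\xi] = p(\xi)$.

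Finally, to upgrade the quasi-sure superhedging inequality to $\pi^\star$-almost-sure equality, we observe that by Proposition~\ref{prop:NA} the process $\mathbf{X}$ is an $(\F^\mathbf{X},\pi^\star)$-martingale. Once the $\pi^\star$-integrability of the stochastic integral $\sum_{t=1}^T \mathbf{\Delta}_{t-1}^\star \cdot (\mathbf{X}_t - \mathbf{X}_{t-1})$ is verified, this integral is a mean-zero $\pi^\star$-martingale, so taking expectations gives $p_0^\star = \E^{\pi^\star}\bigl[p_0^\star + \sum_{t=1}^T \mathbf{\Delta}_{t-1}^\star \cdot (\mathbf{X}_t - \mathbf{X}_{t-1})\bigr] \ge \E^{\pi^\star}[\xi] = p(\xi) = p_0^\star$, and the quasi-sure inequality combined with equality of $\pi^\star$-expectations then forces $\pi^\star$-almost-sure equality. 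The main obstacle we anticipate is precisely this integrability verification — sandwiching the integrand in $L^1(\pi^\star)$ via the two-sided growth control on $\xi$ and the superhedging inequality — together with ensuring that Proposition~\ref{prop:multi_attain} really applies in the sup/lower-semicontinuous form needed here.
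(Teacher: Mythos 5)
Your proposal follows essentially the same route as the paper's (three-line) proof: dual attainment from \Cref{existDualMC} combined with the strategy construction already carried out in the proof of \Cref{thm:intro.robustsuperhedging} for the first part, primal attainment via \Cref{prop:multi_attain} for the second, and the $\pi^\star$--almost-sure equality by taking expectations under $\pi^\star$, for which $\mathbf{X}$ is a martingale by \Cref{prop:NA}. The one caveat you rightly flag---that attaining $\sup_{\pi}\E^{\pi}[\xi]$ over the weakly compact set $\cplmc(\mu^1,\ldots,\mu^N)$ calls for \emph{upper} semicontinuity of $\pi \longmapsto \E^{\pi}[\xi]$, so the lower-semicontinuity hypothesis must be read against the sign conventions of \Cref{prop:multi_attain}---is inherited from the theorem's own hypotheses rather than introduced by your argument, and the paper's proof does not address it either.
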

\begin{proof} 
 To see the first part, we employ \Cref{thm:simple_multi}, see \Cref{existDualMC} for full generality, to show that the dual to $\sup\{\E^{\P}[\xi] :\P \in \cplmc(\mu^1,\ldots,\mu^N) \}$ is attained. Similarly as in proof of \Cref{thm:intro.robustsuperhedging} we can construct a solution to the problem $p(\xi).$ Existence of an optimizer for the latter problem follows from standard arguments, see \Cref{prop:multi_attain}.
\end{proof}

\begin{remark}[Market completeness] 
    The assumption of having the martingale representation property is indeed rather restrictive in discrete-time models. 
    However, we would like to point out that our results extend to the following situation. Let $X^i$, $i \in \{1,\ldots,N\}$, be continuous-time processes on some time interval $[0,\tau]$ with $\tau \in (0,\infty)$. The causality constraint is only imposed on some finite time grid, say at times $0=s_0\leq s_1 \leq \ldots\leq s_T=\tau.$ That is to say, we are given laws of continuous-time processes $\mu^i \in \Pc(\Cc([0,\tau],\R^d))$, but interpret them as laws of discrete-time processes with paths in $\prod_{t=1}^{T}\Xc_t,$ where $\Xc_t\coloneqq \Cc([s_{t-1},s_{t}],\R^d),$ \emph{c.f.}\ {\rm \Cref{sec:general}}. In such a scenario, we would thus assume that we can continuously trade in each market while only having limited information about the other markets, which is updated only at specified times $\{s_0,\ldots,s_T \}$. If all $X^i$ have the martingale representation property (as continuous-time processes), {\rm\Cref{thm:intro.robustsuperhedging,thm:intro.robustsuperhedging_existence}} extend to the above setting with the obvious modifications having been made.
\end{remark} 

\subsection{Bellman's principle} \label{sec:bellman}
A standard approach to tackling dynamic stochastic control problems is the so-called martingale optimality principle, sometimes referred to as the Bellman principle. Tracing back to the 1950s with the pioneering work of \citeauthor*{bellman1957dynamic} \cite{bellman1957dynamic}, it provides a universal way of finding optimal control through backward induction by studying the martingale properties of the value process, provided that some form of dynamic programming equation can be derived. In continuous time, this translates to a suitable form of the Hamilton--Jacobi--Bellman partial differential equation, or in the weak formulation, a suitable form of a first or second-order backward stochastic differential equation.

\medskip It has already been noted, see \emph{e.g.}\ \citeauthor*{PfPi14} \cite{PfPi14} and \citeauthor*{BaBeLiZa17} \cite{BaBeLiZa17}, that the bicausal optimal transport problem enjoys the dynamic programming principle, and thus the Bellman principle can be employed to find an optimizer by locally solving an optimization problem at every time step backwards in time. As a consequence, we can treat the problem as an optimal control problem.

\medskip In this section, we elaborate on the corresponding value process of this control problem and its link to the dual problem. In fact, we show that the dual optimizers coincide with the value process $\pi^\star$--almost surely for any optimal transport map $\pi^\star \in \cplbc(\mu,\nu)$. Thus, we provide a new interpretation for the dual potentials.

\medskip
    More specifically, let $(\mu,\nu) \in \Pc(\R^T) \times \Pc(\R^T)$ be given marginals and let $c : \R^{T} \times \R^{T} \longrightarrow [0,\infty)$ be lower-semicontinuous with $c(x,y)\leq \ell(x)+k(y)$ for some $\ell \in \L^1(\Bc(\R^T),\mu)$ and $k \in \L^1(\Bc(\R^T),\nu)$. Let us denote by $K^\mu_t$ and $K^\nu_t,$ $t \in \{2,\ldots,T\}$ the successive disintegrations of $\mu$ and $\nu,$ respectively. 
    We have the following dynamic programming principle for the bicausal optimal transport.

    \medskip
    Set $V_T(x,y)\coloneqq c(x,y)$ and define for $t \in \{ 1,\ldots, T-1\}$ inductively backwards in time
    \begin{align*}
        V_t(x_{1:t},y_{1:t})&\coloneqq\inf \bigg\{ \int V_{t+1}(x_{1:t+1},y_{1:t+1}) \pi_{t+1}(\mathrm{d}x_{t+1},\mathrm{d}y_{t+1}) \,\bigg\vert\, \pi_{t+1} \in \cpl \Big(K_{t+1}^\mu(x_{1:t};\,\cdot\,),K_{t+1}^\nu(y_{1:t};\,\cdot\,)\Big) \bigg\},    \\
        V_0&\coloneqq\inf \bigg\{ \int V_{1}(x_{1},y_{1}) \pi_{1}(\mathrm{d}x_{1},\mathrm{d}y_{1}) \,\bigg\vert\, \pi_{1} \in \cpl (\mu_1,\nu_1) \bigg\},
    \end{align*}
where by $\mu_1,$ resp.\ $\nu_1,$ we denote the projection on $\mu,$ resp.\ $\nu,$ on the first coordinate. That is, $\mu_1(\mathrm{d}x_1)\coloneqq \mu(\mathrm{d}x_1\times \R^{T-1})$ and $\nu(\mathrm{d}y_1)\coloneqq \nu(\mathrm{d}y_1\times \R^{T-1})$.
    \begin{remark} \label{rem:meas.V} 
        Because $c$ is lower-bounded and lower-semicontinuous, one can readily verify using standard methods that the map
        \[ 
            (x_{1:T-1},y_{1:T-1},\gamma,\eta)\longmapsto \inf \bigg\{ \int c(x_{1:T-1},x_T,y_{1:T-1},y_T) \pi(\mathrm{d}x_{T},\mathrm{d}y_{T}) \,\bigg\vert\, \pi \in \cpl (\gamma,\eta) \bigg\} 
        \] 
        is jointly lower-semicontinuous on $\R^{T-1}\times \R^{T-1} \times \Pc(\R) \times \Pc(\R)$. 
        As a consequence, we have that the map $ (x_{1:T-1},y_{1:T-1}) \longmapsto V_{T-1}(x_{1:T-1},y_{1:T-1})$ is Borel measurable.
        Analogously, one verifies measurability of $V_t$ for any $t \in \{1,\ldots,T-1\}$ backwards in time. 
    \end{remark}
    Note that the value process $V=(V_t)_{t=0}^T$ is by construction $\F^{X,Y}$-adapted, where $\F^{X,Y}$ denotes the filtration generated by the canonical process $(X,Y) \in \R^T \times \R^T$ and $\Fc^{X,Y}_0$ is the trivial $\sigma$-algebra on $\R^T \times \R^T$. Roughly speaking, the dynamic programming principle asserts that $\pi\in \cplbc(\mu, \nu)$ is globally optimal if an only if its transition kernels are locally optimal for every time step. This is summarized in the following proposition.

    \begin{proposition} \label{lem:DPP} 
        We have $V_0=\inf_{\pi \in \cplbc(\mu, \nu)} \int c \mathrm{d}\pi.$ Moreover, $\pi^\star \in \cplbc(\mu,\nu)$ is optimal if and only if $\pi^\star$ admits a successive disintegration
        \[ 
            \pi^\star(\mathrm{d}x,\mathrm{d}y)=\pi_1^\star(\mathrm{d}x_1,\mathrm{d}y_1)\otimes \pi_2^\star(x_1,y_1;\mathrm{d}x_2,\mathrm{d}y_2) \otimes \ldots\otimes\pi_{T}^\star(x_{1:T-1},y_{1:T-1};\mathrm{d}x_T,\mathrm{d}y_T),
        \] 
        such that $\pi^\star_t(x_{t-1},y_{t-1};\,\cdot\,)\in\cpl (K_{t}^\mu(x_{1:t-1};\,\cdot\,),K_{t}^\nu(y_{1:t-1};\,\cdot\,)),$ resp.\ $\pi^\star_1 \in \cpl(\mu_1,\nu_1),$ is optimal for the problem $V_{t-1}(x_{1:t-1},y_{1:t-1})$ for every $t \in \{1,\ldots, T \}$ and $\cplbc(\mu,\nu)$--quasi-almost every $(x_{1:t-1},y_{1:t-1}).$
    \end{proposition}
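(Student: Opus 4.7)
The plan is to proceed by backward induction in time, exploiting the well-known characterization of bicausal couplings: $\pi \in \cplbc(\mu,\nu)$ if and only if it admits a successive disintegration $\pi = \pi_1 \otimes \pi_2 \otimes \cdots \otimes \pi_T$ whose kernels satisfy $\pi_1 \in \cpl(\mu_1,\nu_1)$ and $\pi_t(x_{1:t-1},y_{1:t-1};\,\cdot\,) \in \cpl(K^\mu_t(x_{1:t-1};\,\cdot\,), K^\nu_t(y_{1:t-1};\,\cdot\,))$. Given this, Fubini allows us to decompose $\int c \, \mathrm{d}\pi$ along the filtration.

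The first claim, $V_0 = \inf_{\pi \in \cplbc(\mu,\nu)} \int c \, \mathrm{d}\pi$, is established by showing both inequalities. The inequality $V_0 \leq \int c \, \mathrm{d}\pi$ for any bicausal $\pi$ follows by iterated application of the definition of $V_t$: for $\pi_{1:t}$-a.e.\ $(x_{1:t},y_{1:t})$,
\[
  \int V_{t+1}(x_{1:t+1},y_{1:t+1}) \, \pi_{t+1}(x_{1:t},y_{1:t};\mathrm{d}x_{t+1},\mathrm{d}y_{t+1}) \geq V_t(x_{1:t},y_{1:t}),
\]
and integrating from $t = T-1$ down to $t = 0$ gives the bound. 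For the reverse inequality, we use a measurable selection argument: by the remark $V_t$ is lower-semicontinuous and bounded below by an integrable function, so for each $\varepsilon > 0$ and each $t$ the set-valued map $(x_{1:t-1},y_{1:t-1}) \mapsto \{\pi \in \cpl(K^\mu_t,K^\nu_t) : \int V_t \, \mathrm{d}\pi \leq V_{t-1}+\varepsilon\}$ has nonempty values and admits a measurable selector (by Kuratowski--Ryll-Nardzewski together with lower-semicontinuity of the optimal transport value, see the remark). Gluing these $\varepsilon$-optimal kernels produces a bicausal coupling $\pi^\varepsilon$ with $\int c \, \mathrm{d}\pi^\varepsilon \leq V_0 + T\varepsilon$, and letting $\varepsilon \to 0$ concludes.

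For the characterization of optimal couplings, observe that the inequalities in the iterative decomposition above become equalities precisely when each kernel $\pi^\star_t$ is locally optimal. More precisely, if $\pi^\star$ is optimal then $\int c \, \mathrm{d}\pi^\star = V_0$, and chaining the backward inequalities forces
\[
  \int V_t(x_{1:t},y_{1:t}) \, \pi^\star_t(x_{1:t-1},y_{1:t-1};\mathrm{d}x_t,\mathrm{d}y_t) = V_{t-1}(x_{1:t-1},y_{1:t-1})
\]
for $\pi^\star$-almost every (and thus $\cplbc(\mu,\nu)$-quasi-every) $(x_{1:t-1},y_{1:t-1})$ and every $t$; this is exactly the stated local optimality. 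The converse is immediate by reversing the induction.

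The main technical obstacle is the measurable selection of $\varepsilon$-optimal kernels: one must verify that the graph of the multifunction of couplings is Borel, that the value $V_{t-1}(x_{1:t-1},y_{1:t-1})$ is the lower-semicontinuous value of an optimal transport problem whose data $(K^\mu_t,K^\nu_t)$ depend measurably on $(x_{1:t-1},y_{1:t-1})$, and that an $\varepsilon$-optimal selection exists measurably. This is standard (cf.\ Berge's maximum theorem and the measurable selection results in, e.g., Srivastava or Bogachev), but requires a careful combination of the lower-semicontinuity observed in the remark with the fact that the successive disintegrations $K^\mu_t$ and $K^\nu_t$ are themselves measurable in their conditioning variables.
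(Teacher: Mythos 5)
The paper does not actually prove this proposition: it defers entirely to \cite[Section 5]{BaBeLiZa17} and \cite[Theorem 2.2]{AcKrPa23a}. Your argument is the standard self-contained dynamic-programming proof that those references carry out, and its structure is sound: the characterization of bicausal couplings via successive disintegrations with kernels in $\cpl(K^\mu_t,K^\nu_t)$, the telescoping inequality giving $V_0\le\int c\,\mathrm{d}\pi$, the measurable selection of $\varepsilon$-optimal kernels for the reverse inequality (using the lower semicontinuity recorded in \Cref{rem:meas.V}), and the equality-case analysis for the optimality characterization are all the right ingredients, so this is a legitimate proof where the paper gives none.

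One step needs repair. In the ``only if'' direction you deduce local optimality of the kernels $\pi^\star_t$ for $\pi^\star$-almost every conditioning path and then assert parenthetically that this holds ``thus'' for $\cplbc(\mu,\nu)$-quasi-almost every path. That implication goes the wrong way: quasi-sure statements are stronger than $\pi^\star$-almost-sure ones, since a set can be $\pi^\star$-null yet charged by another bicausal coupling, and on such a set the disintegration of $\pi^\star$ is unconstrained and need not be locally optimal. To obtain the quasi-sure version stated in the proposition you must use the freedom in the phrase ``admits a successive disintegration'': since each conditional problem $V_{t-1}(x_{1:t-1},y_{1:t-1})$ is attained (compactness of $\cpl(K^\mu_t,K^\nu_t)$ together with lower semicontinuity and the integrable bounds on $c$) and admits a measurable selector of exact optimizers, you can redefine the kernels of $\pi^\star$ on the $\pi^\star$-null exceptional set by that selector without changing $\pi^\star$, which yields a version whose kernels are optimal quasi-everywhere. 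With that modification the argument is complete.
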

    \begin{proof}
    See, for example, \cite[Section 5]{BaBeLiZa17} and \cite[Theorem 2.2]{AcKrPa23a}.
    \end{proof}

    \begin{remark}
    A version of the dynamic programming principle can be derived for the causal (see e.g.\ {\rm\cite[Theorem 2.7]{BaBeLiZa17}}) and multicausal optimal transport problems as well. 
    For ease of exposition we consider only the bicausal case here.
    \end{remark}

As usual in optimal control theory, optimality of a transport plan is equivalent to martingale property of the value process.

\begin{theorem} \label{thm:marti_optimality}
    The value process $(V_t)_{t=0}^T$ is an $(\F^{X,Y},\pi)$-submartingale for any $\pi \in \cplbc(\mu,\nu)$.
    Moreover, $V$ is an $(\F^{X,Y},\pi)$-martingale if and only if $\pi \in \cplbc(\mu,\nu)$ is an optimal coupling.
\end{theorem}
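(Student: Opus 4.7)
The plan is to combine the dynamic programming principle (\Cref{lem:DPP}) with the observation that, under bicausality, the one-step conditional kernels of any $\pi \in \cplbc(\mu,\nu)$ are automatically admissible in the local minimization problems defining $V_t$. This turns the submartingale inequality into a tautology and reduces the martingale/optimality equivalence to the DPP.

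First, I fix $\pi \in \cplbc(\mu,\nu)$ and successively disintegrate it as $\pi(\mathrm{d}x,\mathrm{d}y) = \pi_1(\mathrm{d}x_1,\mathrm{d}y_1)\otimes\ldots\otimes \pi_T(x_{1:T-1},y_{1:T-1};\mathrm{d}x_T,\mathrm{d}y_T)$, where $\pi_{t+1}(x_{1:t},y_{1:t};\cdot)$ is a regular conditional version of the law of $(X_{t+1},Y_{t+1})$ given $\Fc^{X,Y}_t$. The bicausality of $\pi$ means exactly that the $X$-marginal of $\pi_{t+1}(x_{1:t},y_{1:t};\cdot)$ equals $K^\mu_{t+1}(x_{1:t};\cdot)$ and its $Y$-marginal equals $K^\nu_{t+1}(y_{1:t};\cdot)$ for $\pi$-a.e.\ $(x_{1:t},y_{1:t})$; so this kernel is admissible in the infimum defining $V_t$. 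Directly from the definition of $V_t$ and the tower property, I therefore get
\[
V_t(X_{1:t},Y_{1:t}) \;\le\; \int V_{t+1}(X_{1:t},x_{t+1},Y_{1:t},y_{t+1})\,\pi_{t+1}(X_{1:t},Y_{1:t};\mathrm{d}x_{t+1},\mathrm{d}y_{t+1}) \;=\; \E^{\pi}\bigl[V_{t+1}\,\big|\,\Fc^{X,Y}_t\bigr],
\]
which is the submartingale inequality. Integrability of each $V_t$ under $\pi$ follows by backward induction: $V_T = c \le \ell(x)+k(y)$ belongs to $\L^1(\pi)$ because $\pi \in \cpl(\mu,\nu)$, and the display above passes integrability from $V_{t+1}$ to $V_t$ together with the lower bound $V_t \ge 0$.

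For the equivalence, the forward direction is now immediate from \Cref{lem:DPP}: if $\pi^\star \in \cplbc(\mu,\nu)$ is optimal, the DPP states that $\pi^\star_{t+1}(x_{1:t},y_{1:t};\cdot)$ attains the infimum defining $V_t(x_{1:t},y_{1:t})$ for $\pi^\star$-a.e.\ $(x_{1:t},y_{1:t})$, so equality holds in the previous display $\pi^\star$-a.s., making $V$ a martingale. For the converse, if $V$ is an $(\F^{X,Y},\pi)$-martingale, then applying the martingale property between times $0$ and $T$ gives
\[
\int c\,\mathrm{d}\pi \;=\; \E^{\pi}[V_T] \;=\; V_0 \;=\; \inf_{\pi' \in \cplbc(\mu,\nu)} \int c\,\mathrm{d}\pi',
\]
where the last equality is the first assertion of \Cref{lem:DPP}. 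Hence $\pi$ attains the infimum and is optimal.

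The only delicate point is the identification of the marginals of $\pi_{t+1}(x_{1:t},y_{1:t};\cdot)$ as $K^\mu_{t+1}(x_{1:t};\cdot)$ and $K^\nu_{t+1}(y_{1:t};\cdot)$; this is a standard consequence of the conditional independence built into the definition of bicausal couplings, but it must be done carefully since a priori $\pi_{t+1}$ depends on both $x_{1:t}$ and $y_{1:t}$ while the marginals may not. Apart from this verification, the proof is essentially a reformulation of the DPP, so I do not expect any substantial obstacle.
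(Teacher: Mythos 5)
Your proposal is correct and follows essentially the same route as the paper: both rest on \Cref{lem:DPP} together with the fact that the successive disintegrations of a bicausal coupling are admissible kernels for the local problems defining $V_t$. If anything, your submartingale step (comparing $V_t$ with $\E^{\pi}[V_{t+1}\mid \Fc^{X,Y}_t]$ via admissibility of $\pi_{t+1}$, plus the backward integrability induction) is spelled out more carefully than the paper's one-line justification, and the marginal identification you flag is indeed the standard recursive characterization of bicausal couplings, so there is no gap.
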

\begin{proof} 
    The first part can readily be seen since, clearly $0\leq V_t\leq \E^{\pi}[ V_{t+1} \vert \Fc_t^{X,Y}]$ $\pi$--almost surely for any $t \in \{0,\ldots,T-1\}$ and any $\pi \in \cplbc(\mu,\nu)$. 
    The latter property is a direct consequence of \Cref{lem:DPP}.
\end{proof}

Conversely to \Cref{thm:marti_optimality}, if there exists a process having the right martingale properties, then its initial value corresponds to the value of the transport problem. This is summarized in the following lemma.

\begin{lemma} 
    Let $\tilde{V}=(\tilde{V}_t)_{t=0}^T$ be an $\F^{X,Y}$-adapted process with $\tilde{V}_T(x,y)=c(x,y)$ $\cplbc(\mu,\nu)$--quasi-surely. 
    If $\tilde{V}$ is an $(\F^{X,Y},\pi)$-submartingale for every $\pi \in \cplbc(\mu,\nu)$ and an $(\F^{X,Y},\pi^\star)$-martingale for some $\pi^\star \in \cplbc(\mu,\nu)$.
    Then $\tilde{V}_0 = V_0$ and $\pi^\star$ is an optimal coupling.
\end{lemma}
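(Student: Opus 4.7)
The proof is short and follows the standard optimal-control template: once the submartingale property pins $\tilde V_0$ from above by the value of every admissible coupling, and the martingale property for $\pi^\star$ pins $\tilde V_0$ from below by $V_0$, we are done. Here is how I would lay it out.

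First, I would note that $\Fc^{X,Y}_0$ was defined to be the trivial $\sigma$-algebra, so $\tilde{V}_0$ is (up to a quasi-sure null set) a deterministic constant, and in particular $\tilde{V}_0=\E^\pi[\tilde V_0]$ for every $\pi \in \cplbc(\mu,\nu)$. Integrability is not an issue: $0 \le c(x,y) \le \ell(x)+k(y)$ with $\ell \in \L^1(\mu)$ and $k \in \L^1(\nu)$, hence $\E^\pi[c]<\infty$ for every $\pi\in\cplbc(\mu,\nu)$, and the submartingale hypothesis bakes integrability of each $\tilde V_t$ into the very definition.

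Next, for an arbitrary $\pi \in \cplbc(\mu,\nu)$, the submartingale property combined with $\tilde V_T = c$ ($\pi$-a.s., since the equality holds $\cplbc(\mu,\nu)$--quasi-surely) gives
\[
    \tilde V_0 \;=\; \E^\pi[\tilde V_0] \;\le\; \E^\pi[\tilde V_T] \;=\; \E^\pi[c].
\]
Taking the infimum over $\pi \in \cplbc(\mu,\nu)$ and invoking \Cref{lem:DPP} (which identifies $V_0 = \inf_{\pi\in\cplbc(\mu,\nu)}\int c\,\mathrm{d}\pi$) yields $\tilde V_0 \le V_0$.

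Finally, applying the $\pi^\star$-martingale hypothesis with the terminal condition gives the matching lower bound
\[
    \tilde V_0 \;=\; \E^{\pi^\star}[\tilde V_T] \;=\; \E^{\pi^\star}[c] \;\ge\; V_0,
\]
the last inequality because $\pi^\star$ is admissible for the infimum defining $V_0$. Chaining the two bounds delivers $\tilde V_0 = V_0 = \E^{\pi^\star}[c]$, and the latter equality is exactly the statement that $\pi^\star$ attains the infimum, i.e.\ $\pi^\star$ is an optimal bicausal coupling. There is no real obstacle: the only points to double-check are the triviality of $\Fc^{X,Y}_0$ so that $\tilde V_0$ is constant and the quasi-sure terminal condition can be read as a $\pi$-a.s.\ identity for every admissible $\pi$, both of which are already built into the hypotheses.
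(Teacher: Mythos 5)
Your proof is correct and follows essentially the same route as the paper's: the submartingale property yields $\E^\pi[c]=\E^\pi[\tilde V_T]\ge \tilde V_0$ for every admissible $\pi$, and the $\pi^\star$-martingale property turns this into an equality, identifying $\tilde V_0$ with $V_0=\inf_{\pi}\int c\,\mathrm{d}\pi$ and showing that $\pi^\star$ attains the infimum. The only difference is that you spell out the triviality of $\Fc^{X,Y}_0$ and the integrability bookkeeping, which the paper leaves implicit.
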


\begin{proof} 
    For any $\pi \in \cplbc(\mu,\nu),$ we have the lower bound $\E^{\pi}[c]=\E^{\pi}[\tilde{V}_T] \geq \tilde{V}_0$. 
    Moreover, the lower bound is attained for $\pi^\star,$ showing the optimality of $\pi^\star$ and $V_0=\E^{\pi^\star}[c]$.
\end{proof}

In summary, we conclude that the value of the optimal transport problem $V_0$ can be characterized by submartingales whose terminal value coincides with $c$. We will now connect this observation with the dual problem introduced in the sections above. We reformulate the dynamic programming principle as follows: Instead of studying submartingales with a terminal value of $c$, we consider martingales with a terminal value less than $c$.

\begin{remark} Vaguely speaking, we expect these two formulations to be equivalent because of the following observation: If a submartingale has the terminal value $c$, its compensated martingale counterpart will have a terminal value less than $c$. Conversely, if a martingale's terminal value is less than $c$, we can add a positive drift to target the value $c$, thus obtaining a submartingale. Indeed, if one wishes to formalize this observation, an appropriate notion of robust Doob's decomposition similarly to {\rm\cite[Theorem 6.1]{BoNu13}} has to be employed.
\end{remark}

More specifically, we consider the following problem

    \begin{equation} \label{eqn:D_0}
        D_0\coloneqq\sup \big\{ M_0 \,\big|\, M=(M_t)_{t=0}^T \in \Mc^{\mu,\nu},\; M_T(x,y)\leq c(x,y)\; \cplbc(\mu,\nu) \text{{\rm--q.s.}}\big\},
    \end{equation}
    where
    \[ 
        \Mc^{\mu,\nu} \coloneqq \big\{ M=(M_t)_{t=0}^T \text{ is an } \big(\F^{X,Y},\pi\big)\text{-martingale for any }\pi \in \cplbc(\mu,\nu)\big\}.
    \]
    
\begin{remark} Let us point out that the set $\Mc^{\mu,\nu}$ is always non-empty as it contains all martingales that are admissible for the dual problem of the adapted optimal transport, see { {\rm \eqref{eqn:dual_potent_cau_bicau}}}.
\end{remark}
\begin{theorem} \label{thm:DPP} We have $V_0=D_0$. Moreover, if $M^\star \in \Mc^{\mu,\nu}$ is a solution to the right-hand side in \eqref{eqn:D_0} and $\pi^\star \in \cplbc(\mu,\nu)$ is a solution to the adapted optimal transport problem, then $V=M^\star$ $\pi^\star$--almost surely.
\end{theorem}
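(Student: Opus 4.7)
The plan is to establish $V_0 = D_0$ via two matching inequalities, and then identify $V$ with $M^\star$ path-wise under $\pi^\star$ using a short martingale argument.

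For the weak inequality $D_0 \le V_0$, I take any admissible $M \in \Mc^{\mu,\nu}$ with $M_T \le c$ $\cplbc(\mu,\nu)$--q.s.\ and any $\pi \in \cplbc(\mu,\nu)$. Since $\Fc_0^{X,Y}$ is trivial, $M_0$ is a constant, and the $\pi$-martingale property combined with $M_T \le c$ $\pi$--a.s.\ gives $M_0 = \E^\pi[M_T] \le \E^\pi[c]$. Infimizing over $\pi$ and then supremizing over $M$ delivers the claim.

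For the reverse inequality $V_0 \le D_0$, I would invoke the bicausal duality in \Cref{thm:intro_causa_adapt}, which represents
\[
V_0 = \sup_{s \in \Sc^{\rm bc}(\mu,\nu),\; s \le c} \int s(x,y)\, (\mu\otimes\nu)(\mathrm{d}x,\mathrm{d}y).
\]
By the definition of $\Sc^{\rm bc}(\mu,\nu)$ recalled in \Cref{sec:general_adapt_causal}, each admissible $s$ decomposes as $s = p_0 + \Sigma$, where $\Sigma$ is a sum of martingale increments built from the transition kernels $K^\mu_t$ and $K^\nu_t$. These increments are martingale differences under \emph{every} bicausal coupling, so the associated partial-sum process $M$ lies in $\Mc^{\mu,\nu}$ with $M_T = s \le c$ q.s. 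Since $\mu\otimes\nu \in \cplbc(\mu,\nu)$, we read off $M_0 = \E^{\mu\otimes\nu}[M_T] = \int s\, \mathrm{d}(\mu\otimes\nu) \le D_0$, and taking the supremum over $s$ finishes this inequality.

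For the second assertion, let $M^\star$ attain $D_0$ and $\pi^\star$ attain the primal bicausal transport. By \Cref{thm:marti_optimality}, $V$ is a $\pi^\star$-martingale, and $M^\star$ is one by admissibility. Their initial values agree, $V_0 = D_0 = M^\star_0$, while $V_T - M^\star_T = c - M^\star_T \ge 0$ $\pi^\star$--a.s. Taking $\pi^\star$-expectations yields
\[
\E^{\pi^\star}[V_T - M^\star_T] = V_0 - M^\star_0 = 0,
\]
which forces $V_T = M^\star_T$ $\pi^\star$--a.s. The difference $V - M^\star$ is then a $\pi^\star$-martingale with vanishing terminal value, hence vanishes identically $\pi^\star$--a.s.

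The only mildly delicate step I foresee is the construction of $M \in \Mc^{\mu,\nu}$ out of an admissible dual potential $s \in \Sc^{\rm bc}(\mu,\nu)$: it requires the increments in the representation of $s$ to be \emph{simultaneous} martingale differences under every bicausal coupling, which is a direct consequence of the conditional independence that defines bicausality together with the marginal transition kernels. Everything else is a routine expectation computation.
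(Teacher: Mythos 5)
Your argument is correct and follows essentially the same route as the paper: the inequality $D_0 \le V_0$ by the martingale/submartingale comparison, the reverse inequality by identifying each dual potential $s \in \Sc^{\rm bc}$ with an element of $\Mc^{\mu,\nu}$ via \Cref{thm:intro_causa_adapt} (resp.\ \Cref{existDualBC}), and the identification $V = M^\star$ $\pi^\star$--a.s.\ from $\E^{\pi^\star}[c - M^\star_T] = V_0 - D_0 = 0$. You merely spell out the steps the paper labels ``readily seen,'' including the observation that the martingale increments in the representation of $s$ are simultaneous martingale differences under every bicausal coupling.
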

\begin{proof} It is readily seen that $D_0 \leq V_0$ and the equality follows from \Cref{thm:intro_causa_adapt}, resp.\ \Cref{existDualBC}. Let now $\pi^\star \in \cplbc(\mu,\nu)$ and $M^\star \in \Mc^{\mu,\nu}$ be optimal. Then,
\begin{equation*} 0\leq\E^{\pi^\star}[c-M_T^\star]= \E^{\pi^\star}[c]-\E^{\pi^\star}[M^\star_T]= V_0-M^\star_0= V_0-D_0=0.\end{equation*}
In particular $M_T^\star=c$ $\pi^\star$--almost surely, which proves for every $t \in \{0,\ldots,T\}$ that \[V_t=\E^{\pi^\star}\Big[c \Big\vert \Fc^{X,Y}_t\Big]=\E^{\pi^\star}\Big[M_T^\star \Big\vert \Fc^{X,Y}_t\Big]=M^\star_t,\;\pi^\star\text{{\rm--a.s.}}\]
This concludes the proof.
\end{proof}

The main result of this section follows. As a consequence of the preceding discussion, we have that the solution to the problem introduced in \eqref{eqn:D_0} coincides with the value process $V.$

\begin{corollary} \label{thm:dpp.duality} Any solution to the dual problem for adapted transport, see {\rm \eqref{eq:thm.cbc.intro.1}}, coincides $\pi^\star$--almost surely with the value process $V$ for any optimal coupling $\pi^\star$.
\end{corollary}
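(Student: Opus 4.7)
The plan is to exhibit that any dual optimizer $s^\star$ canonically induces an optimizer $M^{s^\star}$ of the problem $D_0$, and then invoke Theorem \ref{thm:DPP}, which directly states that any such optimizer coincides $\pi^\star$-almost-surely with $V$.

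First, recall that (by the definition of $\Sc^{\rm bc}(\mu,\nu)$ given in Section \ref{sec:general_adapt_causal}) each $s \in \Sc^{\rm bc}(\mu,\nu)$ is of the form
\[
s(x,y) = s_0 + \sum_{t=1}^T \Big(a_t(x_{1:t},y_{1:t-1}) - \int a_t(x_{1:t-1},\tilde{x}_t,y_{1:t-1}) K_t^\mu(x_{1:t-1};\mathrm{d}\tilde{x}_t)\Big) + \sum_{t=1}^T \Big(b_t(x_{1:t-1},y_{1:t}) - \int b_t(x_{1:t-1},y_{1:t-1},\tilde{y}_t) K_t^\nu(y_{1:t-1};\mathrm{d}\tilde{y}_t)\Big)
\]
for some $s_0 \in \R$ and measurable $a_t, b_t$. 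Associate to $s$ the $\F^{X,Y}$-adapted process $M^s = (M^s_t)_{t=0}^T$ obtained by truncating both sums at time $t$, so that $M^s_T = s$ and $M^s_0 = s_0 = \int s \, \mathrm{d}(\mu \otimes \nu)$, the last identity following by integrating each centred increment against the appropriate kernel.

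Second, I claim $M^s \in \Mc^{\mu,\nu}$. For any $\pi \in \cplbc(\mu,\nu)$, bicausality entails that the $\pi$-conditional law of $X_t$ given $\Fc^{X,Y}_{t-1}$ coincides with $K_t^\mu(X_{1:t-1};\,\cdot\,)$, and symmetrically for $Y_t$. Hence each increment of $M^s$ has vanishing conditional expectation with respect to $\Fc^{X,Y}_{t-1}$ under $\pi$, so $M^s$ is a $\pi$-martingale. Applying this to an optimal $s^\star$, the process $M^{s^\star}$ satisfies $M^{s^\star}_T = s^\star \leq c$ $\cplbc(\mu,\nu)$-quasi-surely, so it is admissible for $D_0$, and
\[
M^{s^\star}_0 = \int s^\star \, \mathrm{d}(\mu \otimes \nu) = \Ac\Wc_c(\mu,\nu) = V_0 = D_0
\]
by Theorem \ref{thm:intro_causa_adapt}, Proposition \ref{lem:DPP}, and Theorem \ref{thm:DPP}, respectively. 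Thus $M^{s^\star}$ is optimal for $D_0$, and another application of Theorem \ref{thm:DPP} yields $V_t = M^{s^\star}_t$ $\pi^\star$-a.s.\ for any optimal $\pi^\star$, which is the desired conclusion.

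The main obstacle will be the second step, since $\Sc^{\rm bc}(\mu,\nu)$ has only been informally described in the present section and its precise structure is deferred. Verifying that the canonically associated partial-sum process is a martingale under \emph{every} bicausal $\pi$ -- rather than only under the product measure $\mu\otimes\nu$ -- relies essentially on the fact that bicausality forces the one-sided transition kernels $K_t^\mu$ and $K_t^\nu$ to serve also as the joint conditional distributions of $X_t$ and $Y_t$ given $\Fc^{X,Y}_{t-1}$ at each time step.
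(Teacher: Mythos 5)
Your proof is correct and follows essentially the same route as the paper: identify the dual optimizer with its associated partial-sum martingale, check that it is admissible and optimal for $D_0$ via $M^{s^\star}_0=\int s^\star\,\mathrm{d}(\mu\otimes\nu)=V_0=D_0$, and conclude by \Cref{thm:DPP}. The only difference is that you spell out the admissibility step (that the centred partial sums form a martingale under every bicausal coupling, because bicausality forces the one-sided kernels to be the conditional laws given $\Fc^{X,Y}_{t-1}$), which the paper dismisses as clear with a pointer to \eqref{eqn:dual_potent_cau_bicau}.
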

\begin{proof} Let $M^\star$ be a solution to the dual. It suffices to verify that $M^\star$ is also a solution to the problem $D_0$ and apply \Cref{thm:DPP}. It is clear that $M^\star$ is admissible for $D_0,$ see {\eqref{eqn:dual_potent_cau_bicau}}. We thus have to verify optimality. Indeed, we have
\[ M^\star_0=V_0=D_0.\]
This concludes the proof.
\end{proof}

Let us conclude this section with the following observation. Provided that the transition kernels as well as the cost function are continuous, we can show continuity of the value process in the $(x,y)$-entries. This then naturally translates to certain regularity of the dual potentials at least on supports of optimal transport maps.

\begin{theorem} \label{thm:dpp.continuity} Assume that $\mu$ and $\nu$ have weakly continuous successive disintegrations and $c$ is continuous and bounded. Then, $(x_{1:t},y_{1:t}) \longmapsto V_t(x_{1:t},y_{1:t})$ are continuous for any $t \in \{1,\ldots,T\}$.
\end{theorem}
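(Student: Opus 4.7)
The plan is to proceed by backward induction on $t \in \{1,\ldots,T\}$, reducing the continuity of $V_t$ at each step to a stability statement for one-dimensional optimal transport under joint variation of cost and marginals. The base case $t=T$ is immediate since $V_T = c$ is continuous by assumption.

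For the inductive step, assume $V_{t+1}$ is continuous on $\R^{2(t+1)}$, and fix a convergent sequence $(x^n_{1:t}, y^n_{1:t}) \to (x^\infty_{1:t}, y^\infty_{1:t})$. Continuity of the successive disintegrations yields the weak convergences $\gamma_n \coloneqq K_{t+1}^\mu(x^n_{1:t};\,\cdot\,) \to \gamma_\infty$ and $\eta_n \coloneqq K_{t+1}^\nu(y^n_{1:t};\,\cdot\,) \to \eta_\infty$ in $\Pc(\R)$, while continuity of $V_{t+1}$ makes $f_n(u,v) \coloneqq V_{t+1}(x^n_{1:t}, u, y^n_{1:t}, v)$ continuous and convergent to $f_\infty$ locally uniformly on $\R^2$. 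Iterating the bound $|c|\le \ell + k$ through the recursion---using the independent coupling for an upper bound---a short backward induction produces $|V_s(x_{1:s}, y_{1:s})| \le \ell_s(x_{1:s}) + k_s(y_{1:s})$, where $\ell_s, k_s$ are the $\mu$- and $\nu$-conditional expectations of $\ell, k$ given the first $s$ coordinates. These estimates supply the uniform integrability needed to pass to the limit under the integral.

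The task now reduces to the one-dimensional stability assertion
\[
    \inf_{\pi \in \cpl(\gamma_n, \eta_n)} \int f_n\, \mathrm{d}\pi \longrightarrow \inf_{\pi \in \cpl(\gamma_\infty, \eta_\infty)} \int f_\infty\, \mathrm{d}\pi.
\]
The lower-semicontinuity direction is standard: pick optimizers $\pi_n^\star$ (existence follows from continuity of $f_n$ and tightness via the integrable majorant), extract a weak cluster point $\pi_\infty^\star \in \cpl(\gamma_\infty, \eta_\infty)$ (tightness comes from the weakly convergent marginals), and combine the locally uniform convergence $f_n \to f_\infty$ with the majorant to apply Fatou. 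This is essentially the lower-semicontinuity argument already invoked in \Cref{rem:meas.V}.

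The main obstacle is the upper-semicontinuity direction. Given $\varepsilon > 0$ and an $\varepsilon$-optimal $\pi^\star \in \cpl(\gamma_\infty, \eta_\infty)$, one needs to exhibit $\pi_n \in \cpl(\gamma_n, \eta_n)$ with $\pi_n \to \pi^\star$ weakly. In one dimension this can be carried out explicitly via the quantile representation: writing $\pi^\star = \mathrm{Law}(U,V)$ on an auxiliary probability space, set
\[
    \pi_n \coloneqq \mathrm{Law}\bigl(F_{\gamma_n}^{-1}(F_{\gamma_\infty}(U)), F_{\eta_n}^{-1}(F_{\eta_\infty}(V))\bigr).
\]
Convergence of the inverse distribution functions at continuity points of $F_{\gamma_\infty}^{-1}$ and $F_{\eta_\infty}^{-1}$ yields $\pi_n \to \pi^\star$ weakly. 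Combining with the locally uniform convergence of $f_n$ and the uniform majorant gives $\int f_n \, \mathrm{d}\pi_n \to \int f_\infty \, \mathrm{d}\pi^\star$ via dominated convergence, whence $\limsup_n V_t(x^n_{1:t}, y^n_{1:t}) \le V_t(x^\infty_{1:t}, y^\infty_{1:t}) + \varepsilon$. Letting $\varepsilon \downarrow 0$ closes the induction.
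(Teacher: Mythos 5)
Your overall strategy is exactly the paper's: backward induction in time, reducing continuity of $V_t$ to joint stability of the one-step value $\inf_{\pi\in\cpl(\gamma,\eta)}\int f\,\mathrm{d}\pi$ under simultaneous perturbation of the cost $f$ and the marginals $\gamma,\eta$. The paper compresses this into the assertion that the map of \Cref{rem:meas.V} is continuous and that $V_{T-1}$ is a composition of continuous maps; your contribution is to spell out the ``standard methods''. Most of it is fine, but one concrete step fails as written. In the upper-semicontinuity direction, if $U\sim\gamma_\infty$ then $F_{\gamma_\infty}(U)$ is uniform on $(0,1)$ only when $\gamma_\infty$ is atomless, so $F_{\gamma_n}^{-1}(F_{\gamma_\infty}(U))$ need not have law $\gamma_n$: take $\gamma_\infty=\delta_0$, in which case $F_{\gamma_\infty}(U)\equiv 1$ and your image measure is a single Dirac mass regardless of $\gamma_n$. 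Since the kernels $K_{t+1}^\mu(x_{1:t};\,\cdot\,)$ may certainly carry atoms, this case is in scope. The standard repair is to represent $\pi^\star$ through a copula, \emph{i.e.}\ write $\pi^\star=(F_{\gamma_\infty}^{-1}\otimes F_{\eta_\infty}^{-1})_{\#}\chi$ for some $\chi\in\cpl(\lambda,\lambda)$ with $\lambda$ the uniform law on $(0,1)$, and set $\pi_n\coloneqq(F_{\gamma_n}^{-1}\otimes F_{\eta_n}^{-1})_{\#}\chi$; almost-everywhere convergence of the quantile functions then yields $\pi_n\longrightarrow\pi^\star$ weakly. (Equivalently, glue $\pi^\star$ with couplings of $(\gamma_n,\gamma_\infty)$ and $(\eta_\infty,\eta_n)$ concentrated near the diagonal.)

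A second, softer point: the sentence claiming that the envelopes $|V_{t+1}(x^n_{1:t},u,y^n_{1:t},v)|\le \ell_{t+1}(x^n_{1:t},u)+k_{t+1}(y^n_{1:t},v)$ ``supply the uniform integrability needed'' is doing more work than it can. Passing to the limit along the varying measures $\pi_n$ requires in addition that $\int\ell_{t+1}(x^n_{1:t},\cdot)\,\mathrm{d}\gamma_n=\ell_t(x^n_{1:t})$ converge to $\ell_t(x^\infty_{1:t})$, and the conditional expectations of an $\L^1$ function along weakly continuous kernels need not be continuous. This gap is shared with the paper's own argument, which asserts continuity of the map of \Cref{rem:meas.V} on all of $\Pc(\R)\times\Pc(\R)$ even for unbounded $c$; for bounded continuous $c$ (or under an explicit uniform-integrability hypothesis on $\ell$ and $k$ along the kernels) both directions of your argument, with the copula fix above, close cleanly.
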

\begin{proof} Similarly as in \Cref{rem:meas.V} we can show that \[ (x_{1:T-1},y_{1:T-1},\gamma,\eta)\longmapsto \inf \bigg\{ \int c(x_{1:T-1},x_T,y_{1:T-1},y_T) \pi(\mathrm{d}x_{T},\mathrm{d}y_{T}) \,\bigg\vert\, \pi \in \cpl (\gamma,\eta) \bigg\} \] is a continuous map on $\R^{T-1}\times \R^{T-1} \times \Pc(\R) \times \Pc(\R)$. Thus, $(x_{1:T-1},y_{1:T-1}) \longmapsto V_{T-1}(x_{1:T-1},y_{1:T-1})$ is continuous since it is a composition of continuous maps. The result then can be verified using analogous arguments backward in time.
\end{proof}

Using this result we can prove at least some regularity of the dual optimizer, provided that the functions $c$ as well as the transition kernels of $\mu$ and $\nu$ are continuous.

\begin{corollary} Assume that $\mu$ and $\nu$ have continuous successive disintegrations and $c$ is continuous and bounded. Then, any solution $M^\star$ to the dual problem in {\rm\eqref{eq:thm.cbc.intro.2}}, see also {\rm \Cref{sec:general_adapt_causal}}, $\pi^\star$--almost surely coincides with $V$ for any $\pi^\star \in \cplbc(\mu,\nu)$ which is optimal, and thus for any $t \in \{1,\ldots,T \}$ the map $(x_{1:t},y_{1:t}) \longmapsto M^\star_t(x_{1:t},y_{1:t})$ continuous on the support of $\pi^\star.$
\end{corollary}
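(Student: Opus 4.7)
The plan is to synthesize \Cref{thm:dpp.duality} and \Cref{thm:dpp.continuity}: the first identifies any dual optimizer with the value process $\pi^\star$-a.s., the second provides continuity of the value process, and combining the two is what produces a continuous representative of the dual optimizer on $\mathrm{supp}(\pi^\star)$.

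Before invoking \Cref{thm:dpp.duality} one must deal with a minor mismatch: \Cref{sec:bellman} is set up under $c \geq 0$, while the present hypothesis is only $|c| \leq \ell(x)+k(y)$. I would resolve this by the standard shift $\tilde c \coloneqq c + \ell + k \geq 0$, observing that this modifies the cost only by a separable integrable term; hence it shifts primal and dual values by the fixed quantity $\int \ell \,\mathrm{d}\mu + \int k \,\mathrm{d}\nu$, translates the value process by the integrable function $\ell(x)+k(y)$, and leaves causality, the set of optimal couplings, the martingale structure of the dual potentials in $\Sc^{\rm bc}(\mu,\nu)$, and the difference $V_t - M^\star_t$ all unchanged. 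The shifted cost $\tilde c$ is non-negative, continuous, and bounded by $2(\ell+k)$, placing us squarely in the framework of \Cref{sec:bellman}.

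With this reduction in place, \Cref{thm:dpp.duality} applied to $\pi^\star$ and $M^\star$ gives $M^\star_t = V_t$ outside a $\pi^\star$-null set, for every $t \in \{1,\ldots,T\}$, and \Cref{thm:dpp.continuity}, whose hypotheses coincide with those of the present corollary, yields that $(x_{1:t},y_{1:t}) \longmapsto V_t(x_{1:t},y_{1:t})$ is continuous on $\R^t\times\R^t$. The final step is routine: replacing $M^\star_t$ on the $\pi^\star$-null set $\mathrm{supp}(\pi^\star) \setminus \{M^\star_t = V_t\}$ by $V_t$ produces a representative of $M^\star_t$ that coincides pointwise with the continuous $V_t$ on $\mathrm{supp}(\pi^\star)$, hence is continuous there. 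I do not anticipate any genuine obstacle—the corollary is essentially a concatenation of the two preceding statements, and the only substantive point is the bookkeeping of the shift $c \mapsto \tilde c$.
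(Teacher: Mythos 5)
Your argument is correct and is essentially the paper's own proof, which simply cites \Cref{thm:dpp.duality} together with \Cref{thm:dpp.continuity}; the additional bookkeeping you supply for the shift $c\mapsto \tilde c$ to enter the non-negative framework of \Cref{sec:bellman} is a reasonable (and slightly more careful) elaboration of the same route.
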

\begin{proof}
    The statement is a direct consequence of \Cref{thm:dpp.duality} and \Cref{thm:dpp.continuity}.
\end{proof}

\section{General formulation and proofs} \label{sec:general}
In this section, we formulate the problems in full generality and provide the main results of this work. 
First, we address the situation with two marginals, namely the causal and bicausal optimal transport problems.
In the second part, we study the multimarginal variant and the barycenter problems.
All results are formulated with as little assumptions as possible.

\medskip Let us point out that the bicausal case discussed in \Cref{sec:general_adapt_causal} is covered by the result concerning the multimarginal case in \Cref{sec:general_multi}. For clarity of presentation, we begin by addressing the case with just two marginals. This approach is taken to help the reader, because the proof for the general case involves a rather heavy notation, which may reduce the transparency of some arguments.

\subsection{Adapted optimal transport and duality} \label{sec:general_adapt_causal}
Let $T \in \N$ be a fixed time horizon and let us write $\Tc \coloneqq \{1,\ldots,T\}$.
For each $t \in \Tc$ we consider Polish spaces $\Xc_t$ and $\Yc_t$ and denote their products by $\Xc\coloneqq  \prod_{t=1}^T\Xc_t$ and, similarly, $\Yc\coloneqq \prod_{t=1}^T\Yc_t$. 
We fix filtered processes
\[ \X=\big(\O^\X,\Fc^\X,\F^\X,\P^\X,X \big)\quad{\rm and}\quad\Y=\big(\O^\Y,\Fc^\Y,\F^\Y,\P^\Y,Y \big),\] 
where $(\O^\X, \Fc^\X)$ and $(\O^\Y,\Fc^\Y)$ are standard Borel spaces, 
$(\O^\X,\Fc^\X,\F^\X,\P^\X)$, $(\O^\Y,\Fc^\Y,\F^\Y,\P^\Y)$ are filtered probability spaces, $X$ is an $\F^\X$-adapted process with $X_t \in \Xc_t$, and $Y$ is an $\F^\Y$-adapted process with $Y_t \in \Yc_t$. We work under the following simplifying standing assumption, which, however, is in most cases without loss of generality thanks to \citeauthor*{BaBePa21} \cite{BaBePa21}.
Indeed, we may replace $\X$ and $\Y$ with their `canonical filtered process{\rm'} counterparts, \emph{c.f.}\ \cite[Definition 3.7]{BaBePa21}, which satisfy the assumption below.
In turn, they allow us to pull back primal and dual optimizers to the original problem with $\X$ and $\Y$, see \Cref{rem:pull_back}.

\begin{assumption} 
    The probability space $\O^\X $ is the product of some Polish spaces $\O^\X_t$, $t \in \Tc$, \emph{i.e.}\ $\O^\X = \prod_{t=1}^T \O^\X_t$.
    The filtration $\F^\X$ is the corresponding canonical filtration generated by the coordinate projections on $\prod_{t=1}^T \O^\X_t$. 
    That is to say, $\Fc^\X_t = \bigotimes_{s=1}^t\Bc(\O^\X_s)\otimes \bigotimes_{s=t+1}^T \{\emptyset, \O^\X_s \}$ for $t \in \Tc$.
    Further, we have that $\Fc^\X=\Fc^\X_T.$ The same conditions \emph{mutatis mutandis} hold true for the filtered process $\Y$.
\end{assumption}

We use the notation $\Xc_{1:t} \coloneqq \prod_{s=1}^t \Xc_s$ and similarly $\O^\X_{1:t}\coloneqq \prod_{s=1}^t \O^\X_s$. Generic elements of $\O^\X$, resp.\ $\O^\X_t$, will be denoted by $\o^\X$, resp.\ $\o^\X_t$, and we write $\o^\X_{1:t}\coloneqq (\o^\X_1,\ldots,\o^\X_t) \in \O^\X_{1:t}$ for generic elements of $\O^\X_{1:t},$ $t \in \Tc$. Analogous notation shall be used for $\Y$. We further write $K^\X_t: \O^\X_{1:t-1}\longrightarrow \Pc(\O^\X_{t:T})$ for a regular version of $\P^\X(\mathrm{d}\omega_{t:T}^\X\vert \Fc_{t-1}^\X)$ for $t \in \{2,\ldots,T\}.$ Similarly, we use the notation $K^\Y_t,$ $t \in \{2,\ldots,T\},$ for $\P^\Y(\mathrm{d}\omega_{t:T}^\X\vert \Fc_{t-1}^\Y)$.
Finally, we set $\Fc_0^\X\coloneqq\{\Omega^\X,\emptyset \}$ and $\Fc_0^\Y\coloneqq\{\Omega^\Y,\emptyset \}.$

\begin{definition} \label{def:couplings}
    We denote by $\cpl(\X,\Y)$ the set of all probability measures on $\O^\X \times \O^\Y$ with marginals $\P^\X$ and $\P^\Y$ and call its elements couplings. We say that a coupling $\pi \in \cpl(\X,\Y)$ is:
\begin{enumerate}[label = (\roman*), leftmargin=*]
    \item \label{it:def.couplings.causal} causal, denoted by $\pi \in \cplc(\X,\Y)$, if $\Fc^\X_T \otimes \Fc^\Y_0$ is $\pi$-independent of $\Fc^\X_0 \otimes \Fc^\Y_t$ conditionally on $\Fc^\X_t \otimes \Fc^\Y_0,$ $t \in \Tc$;
    \item \label{it:def.couplings.anticausal} anticausal, denoted by $\pi \in \cplac(\X,\Y)$, if $\Fc^\X_0 \otimes \Fc^\Y_T$ is $\pi$-independent of $\Fc^\X_t \otimes \Fc^\Y_0$ conditionally on $\Fc^\X_0 \otimes \Fc^\Y_t,$ $t \in \Tc$;
    \item \label{it:def.couplings.bicausal} bicausal, denoted by $\pi \in \cplbc(\X,\Y)$, if $\pi \in \cplc(\X,\Y) \cap \cplac(\X,\Y)$.
\end{enumerate}
\end{definition}

Let $c: \O^\X \times \O^\Y \longrightarrow \R$ be a measurable cost function. We consider the following optimal transport problems
\begin{align*} \Cc\Wc_c(\X,\Y)&\coloneqq\inf_{\pi \in \cplc(\X,\Y)} \int_{\O^\X \times \O^\Y} c(\o^\X,\o^\Y) \pi(\mathrm{d}\o^\X,\mathrm{d}\o^\Y),
\\
\Ac\Wc_c(\X,\Y)&\coloneqq\inf_{\pi \in \cplbc(\X,\Y)} \int_{\O^\X \times \O^\Y} c(\o^\X,\o^\Y) \pi(\mathrm{d}\o^\X,\mathrm{d}\o^\Y)
\end{align*}
We have the following standard result, see \cite[Remark 2.3]{EcPa22}.
\begin{proposition} 
 Let $c : \O^\X \times \O^\Y \longrightarrow \R \cup \{+\infty\}$ be a lower-semicontinuous function and
 $\ell \in \L^1(\P^\X)$ and $k \in \L^1(\P^\Y)$ be such that $\ell(\o^\X)+k(\o^\Y) \leq c(\o^\X,\o^\Y),$ $(\o^\X,\o^\Y) \in \O^\X \times \O^\Y$. 
 Then the infima in $\Cc\Wc_c(\X,\Y)$ and $\Ac\Wc_c(\X,\Y)$ are attained.
\end{proposition}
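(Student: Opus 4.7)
The plan is to apply the direct method of the calculus of variations: extract a weakly convergent subsequence from a minimizing sequence of (bi)causal couplings, show that the limit remains (bi)causal, and pass to the limit in the cost using lower-semicontinuity. I treat the causal case; the bicausal case follows by symmetry.

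First, I would observe that $\cpl(\X,\Y)$ is tight. Since $\P^\X$ and $\P^\Y$ are Borel probability measures on Polish spaces, each is tight, and a standard compact-rectangle argument produces, for every $\varepsilon > 0$, a compact set $K^\X \times K^\Y \subset \O^\X \times \O^\Y$ with $\pi(K^\X \times K^\Y) \geq 1 - \varepsilon$ for all $\pi \in \cpl(\X,\Y)$. By Prokhorov's theorem, any minimizing sequence $(\pi_n)$ in $\cplc(\X,\Y)$ admits a weakly convergent subsequence $\pi_{n_k} \to \pi^\star$; the weak limit automatically has marginals $\P^\X$ and $\P^\Y$.

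The central step is the weak closedness of $\cplc(\X,\Y)$ in $\cpl(\X,\Y)$. Causality of $\pi$ can be recast as the identity
\begin{equation*}
\int f(\o^\X)\, g(\o^\Y_{1:t})\, \pi(\mathrm{d}\o^\X, \mathrm{d}\o^\Y) = \int \E^{\P^\X}\!\bigl[f \,\big\vert\, \Fc^\X_t\bigr](\o^\X_{1:t})\, g(\o^\Y_{1:t})\, \pi(\mathrm{d}\o^\X, \mathrm{d}\o^\Y),
\end{equation*}
holding for every $t \in \Tc$, $f \in C_b(\O^\X)$, and $g \in C_b(\O^\Y_{1:t})$. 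The left-hand side is weakly continuous in $\pi$; the right-hand side is more delicate because the conditional expectation is generally only bounded measurable. However, since every $\pi \in \cpl(\X,\Y)$ shares the fixed marginal $\P^\X$, one can pass to the limit in the right-hand side by standard adapted-transport arguments, see for instance \cite{BaBeLiZa17, BaBePa21}. Anticausality is handled analogously by exchanging the roles of $\X$ and $\Y$, hence bicausality is also weakly closed.

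Finally, passing to the limit in the cost is routine. Because $c$ is lower-semicontinuous and $c(x,y) \geq \ell(x) + k(y)$ with $\ell, k$ integrable against the fixed marginals, and since $\int (\ell(X) + k(Y)) \, \mathrm{d}\pi$ is a constant over $\cpl(\X,\Y)$, the lower-semicontinuous Portmanteau theorem---applied to a lower truncation $c \vee (-n)$ followed by a monotone-limit argument to absorb the unbounded negative part---yields
\begin{equation*}
\int c(X,Y)\, \mathrm{d}\pi^\star \leq \liminf_{k \to \infty} \int c(X,Y)\, \mathrm{d}\pi_{n_k} = \Cc\Wc_c(\X,\Y),
\end{equation*}
so that $\pi^\star$ attains the infimum. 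The main obstacle is the weak closedness of the causal couplings: it is conceptually transparent but demands care since the relevant conditional expectations are not continuous, and must be addressed by exploiting the fixed-marginal structure.
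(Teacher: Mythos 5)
Your argument is correct and is exactly the standard route the paper itself invokes (it gives no proof, citing \cite{EcPa22}): tightness and weak closedness of $\cplc(\X,\Y)$, resp.\ $\cplbc(\X,\Y)$, plus weak lower semicontinuity of $\pi \longmapsto \int c\,\mathrm{d}\pi$ on the set of couplings with fixed marginals. The two points you flag as delicate are indeed the only ones requiring care, and your fixes are the right ones: the fixed $\X$-marginal lets you pass to the limit against $\E^{\P^\X}[f\,\vert\,\Fc^\X_t]\cdot g$, and the truncation $c\vee(-m)$ converges to $c$ uniformly over $\cpl(\X,\Y)$ because the error is controlled by tails of $\ell$ and $k$ against the fixed marginals.
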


As we are interested in studying the dual problems, we introduce the set of dual functions in a similar manner as in \cite[Section 5]{EcPa22}.
The causality and anticausality property of a coupling, \emph{c.f.}\ \Cref{def:couplings}$.\ref{it:def.couplings.causal}$ and $\ref{it:def.couplings.anticausal}$, can be verified by testing, for each $t \in \{2,\ldots,T\}$, against the following sets of dual variables
\begin{align*}
    \Ac_{\X,t}\coloneqq\bigg\lbrace f_t(\o^\X,\o^\Y)&=a_t(\o^\X_{1:t},\o^\Y_{1:t-1})-\int_{\O^\X_t} a_t(\o^\X_{1:t-1},\tilde{\omega}^\X_t,\o^\Y_{1:t-1}) K_t^{\X}(\o^\X_{1:t-1};\mathrm{d}\tilde{\omega}^\X_t)\,\bigg\vert\, \\
    &a_t \text{ is }\Fc_t^\X \otimes \Fc_{t-1}^\Y\text{-measurable and } a_t(\o^\X_{1:t-1},\,\cdot\,,\o^\Y_{1:t-1}) \in \L^1\big( \Bc(\O_t^\X),K_t^\X(\o^\X_{1:t-1};\,\cdot\,)\big)\bigg\rbrace,
    \\
    \Ac_{\Y,t}\coloneqq\bigg\lbrace g_t(\o^\X,\o^\Y)&=a_t(\o^\X_{1:t-1},\o^\Y_{1:t})-\int_{\O^\Y_t} a_t(\o^\X_{1:t-1},\o^\Y_{1:t-1},\tilde{\omega}^\Y_t)K_t^{\Y}(\Bc(\O_t^\Y);\mathrm{d}\tilde{\omega}^\Y_t)\,\bigg\vert\, \\
    &a_t \text{ is }\Fc_{t-1}^\X \otimes \Fc_{t}^\Y\text{-measurable and } a_t(\o^\X_{1:t-1},\o^\Y_{1:t-1},\,\cdot\,) \in \L^1\big(\Bc(\O_t^\Y),K_t^\Y(\o^\Y_{1:t-1};\,\cdot\,)\big)\bigg\rbrace.
\end{align*}
Similarly, the next sets of functions
\begin{align*}
    \Ac_{\X,1}\coloneqq \lbrace f_1(\o^\X,\o^\Y)
    =a_1(\o^\X_1) \,\vert\, a_1 \in \L^1(\Fc^\X_1,\P^\X)\rbrace
    \quad&{\rm and}\quad
    \Ac_{\Y,1}\coloneqq\lbrace g_1(\o^\X,\o^\Y)=a_1(\o^\Y_1) \,\vert\, a_1 \in \L^1(\Fc^\Y_1,\P^\Y)\rbrace,
    \\
    \Sc_{\X,0}\coloneqq  \lbrace s(\o^\X,\o^\Y)
    =f(\o^\X) \,\vert\, f \in \L^1(\Fc^\X,\P^\X)\rbrace 
    \quad &{\rm and }\quad
    \Sc_{\Y,0}\coloneqq\left\lbrace s(\o^\X,\o^\Y)=g(\o^\Y) \,\vert\, g \in \L^1(\Fc^\Y,\P^\Y) \right\rbrace,
\end{align*} 
permit us to identify certain marginals.
We set
\begin{align}
\begin{split} \label{eqn:dual_potent_cau_bicau}
\Sc_{\X} &\coloneqq\Big\lbrace s(\o^\X,\o^\Y)=f_1(\o^\X_1)+ \sum_{t=2}^T f_t(\o^\X_{1:t},\o^\Y_{1:t-1}) \,\Big\vert\, f_t \in \Ac_{\X,t} \Big\rbrace, \\
\Sc_{\Y} &\coloneqq\Big\lbrace s(\o^\X,\o^\Y)=g_1(\o^\Y_1)+ \sum_{t=2}^T g_t(\o^\X_{1:t-1},\o^\Y_{1:t}) \,\Big\vert\, g_t \in \Ac_{\Y,t} \Big\rbrace.
\end{split}
\end{align}
Finally, define $\Sc^{\rm c}\coloneqq\Sc_{\X} \oplus \Sc_{\Y,0}\coloneqq \{s(\omega^\X,\omega^\Y)=s^\X(\omega^\X,\omega^\Y)+s^\Y(\omega^\X,\omega^\Y) \,\vert\, s^\X \in \Sc_{\X},\;s^\Y \in \Sc_{\Y} \}$ and $\Sc^{\rm bc}\coloneqq\Sc_{\X} \oplus \Sc_{\Y}$.
We remark that a probability measure $\pi$ on $\O^\X\times\O^\Y$ is in $\cplc(\X,\Y)$ if and only if $\int s(\omega^\X,\omega^\Y) \pi(\mathrm{d}\omega^\X, \mathrm{d}\omega^\Y)  = \int s(\omega^\X, \omega^\Y) (\P^\X \otimes \P^\Y)(\mathrm{d}\omega^\X, \mathrm{d}\omega^\Y)$ for all integrable $s \in \Sc^{\rm c}$.
The set of bicausal couplings $\cplbc(\X,\Y)$ and $\Sc^{\rm bc}$ share an analogous relation, see also \Cref{rem:test_fun}.

\medskip We have the following standard duality results for the causal and bicausal optimal transport problems with a lower-semicontinuous cost function. In the sequel, these shall be generalized for a situation with a measurable cost. Note that the condition on being bounded from below of the cost function in \Cref{thm:caus_dual} can be relaxed by standard techniques to being bounded from below by integrable functions.

\begin{theorem}[Duality for a l.s.c.\ cost] \label{thm:caus_dual}
    Let $c : \O^\X \times \O^\Y \longrightarrow \R$ be lower-semicontinuous and such that there are functions $\ell \in \L^1(\P^\X)$ and $k\in \L^1(\P^\Y)$ such that $|c(\o^\X,\o^\Y)|\leq \ell(\o^\X)+k(\o^\Y).$ 
    Then we have
    \begin{align*}
        \Cc\Wc_c(\X,\Y)=& \sup \bigg\{  \int_{\O^\X \times \O^\Y} s(\o^\X,\o^\Y) (\P^\X \otimes \P^\Y)(\mathrm{d}\o^\X,\mathrm{d}\o^\Y) \,\bigg\vert\, s \in \Sc^{\rm c}, \; s \leq c \bigg\}, \\
        \Ac\Wc_c(\X,\Y)=&\sup \bigg\{ \int_{\O^\X \times \O^\Y} s(\o^\X,\o^\Y) (\P^\X \otimes \P^\Y)(\mathrm{d}\o^\X,\mathrm{d}\o^\Y) \,\bigg\vert\, s \in \Sc^{\rm bc}, \;s \leq c \bigg\}.
    \end{align*}
\end{theorem}

\begin{proof} 
    Similar result was proved in \citeauthor*{EcPa22} \cite[Proposition 5.2]{EcPa22}. We note that our set of dual variables is larger than in \cite{EcPa22}, so it suffices to verify weak duality, \emph{i.e.}, the inequality `$\geq$'. We consider the causal optimal transport as the bicausal can be done analogously. Let 
    \begin{equation} \label{eqn:lsc_cost_dual}\Sc^{\rm c} \ni s(\o^\X,\o^\Y)=f_1(\o^\X_1)+ \sum_{t=2}^T f_t(\o^\X_{1:t},\o^\Y_{1:t-1})+g(\o^\Y) \leq c(\o^\X,\o^\Y) \end{equation} be admissible for the dual and let $\pi \in \cplc(\X,\Y)$ be arbitrary. Integrating both sides of $\eqref{eqn:lsc_cost_dual}$ with respect to $K_2^\X(\o^i_{1};\mathrm{d} \o^i_{2:T})$ gives 
    \begin{equation}\label{eqn:lsc_cost_dual2}  f_2(\o^\X_{1:2},\o^\Y_{1}) \leq \int_{\O^\X_{2:T}} c(\o^\X,\o^\Y) K_2^\X(\o^\X_{1};\mathrm{d} \o^\X_{2:T}) -f_1(\o^\X_1)-g(\o^\Y). \end{equation}
    Using causality of $\pi,$ we may find a  disintegration of $\pi,$ see \emph{e.g.} \cite[Definition 2.1 and Remark 2.2]{EcPa22}, of the form $\pi(\mathrm{d}\o^\X,\mathrm d \o^\Y)=\pi^1(\mathrm{d}\o^\X_{1},\mathrm{d}\o^\Y_1) \otimes K^\X_2(\o^\X_1;\mathrm{d}\o^{\X}_2) \otimes K^\pi(\o^\X_{1:2},\o^\Y_1; \mathrm{d}\o^{\X}_{3:T},\mathrm{d}\o^{\Y}_{2:T})$ for suitable kernels.
    
    \medskip From the assumptions, we know that $c \in \L^1(\pi)$ and, by assumptions, $f_1, g \in \L^1(\pi).$ It follows that the right-hand side of \eqref{eqn:lsc_cost_dual2} is integrable with respect to $\pi,$ and so integral $ \int f_2 \mathrm{d} \pi$ exists and lies in $[-\infty, \infty).$ By Fubini's theorem, we then obtain that $\int f_2 \mathrm{d} \pi=0,$ and in particular $f_2 \in \L^1(\pi).$ By repeating the argument above successively for $t=3,\ldots,T,$ we obtain $f_t \in \L^1(\pi)$ and $\int f_t \mathrm d \pi=0$ for every $t \in \{2,\ldots,T\}.$ Finally, integrating \eqref{eqn:lsc_cost_dual} with respect to $\pi$ gives
    \[ \int_{\O^\X \times \O^\Y} c \mathrm{d}\pi\geq \int_{\O^\X \times \O^\Y} s \mathrm{d}\pi= \int_{\O^\X \times \O^\Y} (f_1+g) \mathrm{d}\pi=\int_{\O^\X \times \O^\Y} (f_1+g) \mathrm{d}(\P^\X \otimes \P^\Y)=\int_{\O^\X \times \O^\Y} s \mathrm{d}(\P^\X \otimes \P^\Y),\] where we have also used that $\P^\X \otimes \P^\Y \in \cplc(\X,\Y).$ Since $\pi$ and $s$ were arbitrary, this concludes the proof.
\end{proof}

\begin{remark} \label{rem:test_fun}
Let us point out that, while the integrals on the right-hand sides appear rather involved, it is clear from the proof of {\rm\Cref{thm:caus_dual}} the expression $\int s \mathrm{d}(\P^\X \otimes \P^\Y)$ substantially simplifies in both cases.
Indeed, if $s=f_1+ \sum_{t=2}^T f_t+g \in \Sc^{\rm c}$ is bounded, then we have
\[ \int_{\O^\X \times \O^\Y} s(\o^\X,\o^\Y) (\P^\X \otimes \P^\Y)(\mathrm{d}\o^\X,\mathrm{d}\o^\Y)=\int_{\O_1^\X} f_1(\o^\X_1) \P^\X(\mathrm{d}\o^\X_1)+\int_{\O^\Y} g(\o^\Y) \P^\Y(\mathrm{d}\o^\Y).\] Similarly, if $s=f_1+g_1 +\sum_{t=2}^T f_t+\sum_{t=2}^T g_t \in \Sc^{\rm bc}$, then we have
\[ \int_{\O^\X \times \O^\Y} s(\o^\X,\o^\Y) (\P^\X \otimes \P^\Y)(\mathrm{d}\o^\X,\mathrm{d}\o^\Y)=\int_{\O_1^\X} f_1(\o^\X_1) \P^\X(\mathrm{d}\o^\X_1)+\int_{\O_1^\Y} g_1(\o^\Y_1) \P^\Y(\mathrm{d}\o^\Y_1).\]
\end{remark}

We now proceed with one of the main results of this section---the duality result for general cost function as well as dual attainment for causal optimal transport. Let us emphasize that we require only measurability of the function $c$ together with an integrable upper bound to obtain duality and dual attainment. No further regularity of the cost function nor the transition kernels of $\P^\X$ and $\P^\Y$ is required. As such, our result covers basically any reasonable framework. On the other hand, under such general assumptions, there is obviously no hope to obtain any regularity of the dual potentials apart from suitable integrability. Similarly, it is clear that in general we lose attainment of the primal problem when the cost function is not lower-semicontinuous.

\begin{theorem}[General case---causal transport]\label{existDualC}
    Let $c : \O^\X \times \O^\Y \longrightarrow \R \cup \{ -\infty \}$ be measurable 
and $\ell \in \L^1(\P^\X)$, $k \in \L^1(\P^\Y)$ be such that $c(\o^\X,\o^\Y) \leq \ell(\o^\X) +k(\o^\Y) ,$ $(\o^\X,\o^\Y) \in \O^\X \times \O^\Y$.
    Then we have
    \[
        \Cc\Wc_{c}(\X,\Y) = \sup \bigg\{ \int_{\O^\X \times \O^\Y} s(\o^\X,\o^\Y) (\P^\X \otimes \P^\Y)(\mathrm{d}\o^\X,\mathrm{d}\o^\Y) \,\bigg\vert\, s \in \Sc^{\rm c},  s \leq c \bigg\}. 
    \]
    Moreover, if either side is finite, then there is $s^\star \in \Sc^{\rm c}$ with $s^\star \le c$ and
    \[
        \Cc\Wc_{c}(\X,\Y) = \int_{\O^\X \times \O^\Y} s^\star \mathrm{d}(\P^\X \otimes \P^\Y).
    \]
\end{theorem}

\begin{remark} 
\label{rem:bounded_c} 
    Note that we may without loss of generality assume that $c \le 0$.
    Indeed, let us define 
    \[ 
        \tilde{c}(\o^\X,\o^\Y)\coloneqq c(\o^\X,\o^\Y)-\ell(\o^\X)-k(\o^\Y),\quad (\o^\X,\o^\Y) \in \O^\X \times \O^\Y. 
    \] 
    Then, 
    $\tilde{c}$ is measurable with $\tilde{c} \leq 0$ and $\Cc\Wc_{\tilde{c}}(\X,\Y)=\Cc\Wc_{c}(\X,\Y)-\int_{\O^\X} \ell \mathrm{d}\P^\X - \int_{\O^\Y} k \mathrm{d}\P^\Y$.
    Similarly, we have
    \[ 
        \sup_{\tilde{s} \in \Sc^{\rm c} :\, \tilde{s} \leq \tilde{c} } \int_{\O^\X \times \O^\Y} \tilde{s} \mathrm{d}(\P^\X \otimes \P^\Y)=\sup_{s \in \Sc^{\rm c} :\, s \leq c} \int_{\O^\X \times \O^\Y} s \mathrm{d}(\P^\X \otimes \P^\Y)-\int_{\O^\X} \ell\mathrm{d}\P^\X - \int_{\O^\Y} k\mathrm{d}\P^\Y.
    \] 
    It is thus readily seen that if $\tilde{s} \in \Sc^{\rm c}$ is a solution to the left-hand side, then $s\coloneqq\tilde{s}+\ell+k$ attains the supremum on the right-hand side. 
    Indeed, it holds that $s \le c$. 
    Moreover, to verify that  $s \in \Sc^{\rm c}$, we can proceed by the same construction as in {\rm\citeauthor*{EcPa22} \cite[Remark 5.1]{EcPa22}}. 
\end{remark}

\begin{proof}[Proof of {\rm\Cref{existDualC}}]
    Thanks to \Cref{rem:bounded_c}, we can assume without loss of generality that $c\leq 0$.

    \medskip
    We write $\mathcal C^-$ for the set of continuous, bounded, and non-positive functions on $\O^\X \times \O^\Y$, and $\mathcal M^-$ for the set of measurable and non-positive functions on $\O^\X \times \O^\Y$.
    For $c \in \Mc^-,$ we denote
    \begin{align*}
        D(c) \coloneqq  \sup_{s \in \Sc^{\rm c},\, s \le c } \int_{\O^\X \times \O^\Y} s  \mathrm{d}(\P^\X\otimes\P^\Y), \quad
        V(c) \coloneqq  \Cc\Wc_c(\X,\Y).
    \end{align*}
    We show that $V$ and $D$ are continuous from below on $\mathcal C^-$ and continuous from above on $\mathcal M^-$. That is to say, for any sequence $(c^n)_{n \in \N}$  in $\Cc^-$ such that the convergence $c^n\nearrow c$ holds pointwise for some function $c: \O^\X \times \O^\Y \longrightarrow (-\infty,0],$ we have $D(c^n)\nearrow D(c)$ and $V(c^n)\nearrow V(c)$. Note that in this case $c$ is necessarily bounded and, moreover, lower-semicontinuous since it is a pointwise supremum of continuous functions. Similarly, by the latter we mean that for any sequence $(c^n)_{n \in \N}$ in $\Mc^-$ such that the convergence $c^n\searrow c$ holds pointwise for some (necessarily measurable) function $c: \O^\X \times \O^\Y \longrightarrow [-\infty,0],$ we have $D(c^n)\searrow D(c)$ and $V(c^n)\searrow V(c)$.

    \medskip
   \emph{Step 1} (Continuity of $D$ and $V$ from below):
    Let $(c^n)_{n \in \N}$ be a sequence in $\mathcal C^-$ with $c^n \nearrow c$ for some $c$.
    On the one hand, we have
    \begin{align*}
        \lim_{n \to \infty} D(c^n) = \lim_{n \to \infty} V(c^n)\quad \text{and}\quad V(c) = D(c),
    \end{align*}
    by the standard duality result for lower-semicontinuous cost function, see \Cref{thm:caus_dual}.
    On the other hand, since $\cplc(\X,\Y)$ is weakly compact, it follows by standard arguments that $V(c^n) \nearrow V(c)$, see, for example, \cite[Step 3 in the proof of Theorem 1.3]{Vi03}, which shows continuity from below of $V$ and $D$ on $\mathcal C^-$.

    \medskip
    \emph{Step 2} (Continuity of $V$ from above):
    Let $(c^n)_{n \in \N}$ in $\mathcal M^-$ and $c \in \Mc^-$ be such that $c_n \searrow c$.
    We have
    \begin{equation*}
        \lim_{n \to \infty} V(c^n) = \inf_{n \in \N} \inf_{\pi \in \cplc(\X,\Y)} \int_{\O^\X \times \O^\Y} c^n  \mathrm{d}\pi= \inf_{\pi \in \cplc(\X,\Y)} \inf_{n \in \N} \int_{\O^\X \times \O^\Y} c^n  \mathrm{d}\pi= \inf_{\pi \in \cplc(\X,\Y)} \int_{\O^\X \times \O^\Y} c \mathrm{d} \pi = V(c),
    \end{equation*}
    by monotone convergence, which yields continuity from above of $V$ on $\mathcal M^-$.

    \medskip
     \emph{Step 3} (Main part): Next, we come to the main part of the proof where we show dual attainment as well as continuity from above of $D$ on $\mathcal M^-$.
    To this end, let $(c^n)_{n \in \N}$ be a sequence in $\mathcal M^-$ with $c^n \searrow c$ for some $c \in \Mc^-$ such that $\inf_{n \in \N} D(c^n) > -\infty$. Note that in the case $\inf_{n \in \N} D(c^n) = -\infty,$ we clearly have \[-\infty = \inf_{n \in \N} D(c^n)=\lim_{n \rightarrow \infty}D(c^n)\geq D(c)\geq -\infty.\] Thus continuity from above holds trivially. Now, for $n \in\N$, we write
    \[
        S^n \coloneqq D(c^n) = \sup_{s \in \Sc^{c}  : s \leq c^n} \int_{\O^\X \times \O^\Y} s \mathrm{d}(\P^\X \otimes \P^\Y) \le 0,
    \] 
    and pick a $1/n$-optimizer for $D(c^n),$ say $s^n \in \Sc^{\rm c}$.
    That means we have for all $(\o^\X,\o^\Y) \in \O^\X \times \O^\Y$
    \begin{gather} \label{eqn:beforebound}
         s^n(\o^\X,\o^\Y) = f_1^n(\o^\X_1) + \sum_{t=2}^T f_t^n(\o^\X_{1:t},\o^\Y_{1:t-1})+g^n(\o^\Y) \leq c^n(\o^\X,\o^\Y) \leq 0,
        \\ 
        \label{eqn:bound} 
        \int_{\O^\X \times \O^\Y} \big( f_1^n+g^n\big) \mathrm{d}(\P^\X \otimes \P^\Y) \geq S^n-\frac{1}{n}
        \enspace\text{and}\;
        \lim_{n \rightarrow \infty}\int_{\O^\X \times \O^\Y} \big( f_1^n+g^n\big) \mathrm{d}(\P^\X \otimes \P^\Y) = \inf_{n \in \N} S^n \eqqcolon S.
    \end{gather}

    \emph{Step 3.1} ($t=1$): Integrating both sides of the inequality in \eqref{eqn:beforebound} with respect to the kernel $K^\X_2(\o^\X_1;\mathrm{d}\o^\X_{2:T})$ yields
    \begin{align} \label{bd1}
        f_1^n(\o^\X_1)+g^n(\o^\Y) \leq 0,
    \end{align}
    using that $c^n \le 0$ and that we have for every $t \in \{2,\ldots,T\}$ that $f_t^n \in \Ac_{\X,t}$, thus 
    \[ 
        \int_{\O^\X_{2:T}} f_t^n(\o^\X_{1:t},\o^\Y_{1:t-1}) K^\X_2(\o^\X_1;\mathrm{d}\o^\X_{2:T})=0.
    \]
    Moreover, we can assume without loss of generality that $ \int f^n_1\mathrm{d}\P^\X=\int g^n \mathrm{d}\P^\Y\eqqcolon I^n,\; n \in \N$. 
    Indeed, if this is not the case, we can subtract a constant from $f^n_1$ and add the same constant to $g^n$, which clearly does not change admissibility in the dual or value when integrated against $\P^\X \otimes \P^\Y$.
    \medskip

    It follows from \eqref{eqn:bound} that  $S^n \geq 2 \cdot I^n \geq S-1$  for every $n \in \N$.
    By integrating both sides of \eqref{bd1} with respect to $\P^\Y$ and $\P^\X$ respectively we find that
    \begin{align*}
    f_1^n(\o^\X_1)\leq - I^n \leq - \frac{S-1}{2} \quad \text{and} \quad g^n(\o^\Y)\leq - I^n \leq -\frac{S-1}{2}.
    \end{align*} 
    Hence, using the preceding inequalities we obtain
    \[ \int_{\O^\X_1} |f_1^n| \mathrm{d}\P^\X=\int_{\O^\X_1} [2 (f_1^n)^+ -  f_1^n ]\mathrm{d}\P^\X\leq -2\frac{S-1}{2} - \frac{S-1}{2} = -\frac32(S-1). \] 
    By applying analogous arguments to $g^n$ we conclude
    \[ 
        \sup_{n \in \N} \lVert f_1^n \rVert_{\L^1(\Fc^\X_1,\P^\X)}< \infty\quad\text{and}\quad \sup_{n \in \N} \lVert g^n \rVert_{\L^1(\Fc^\Y,\P^\Y)}< \infty. 
    \]
    Using Koml\'{o}s' lemma, \cite[Theorem 1]{Ko67}, we can without loss of generality, up to taking the Ces\`aro means of a subsequence, assume that there are functions $\tilde f_1 \in \L^1(\Fc^\X_1,\P^\X)$ and $\tilde g \in \L^1(\Fc^\Y,\P^\Y)$ such that $f_1^n \longrightarrow \tilde f_1$ on a $\P^\X$--full Borel set $A^1 \in \Fc_1^\X$ and $g^n \longrightarrow \tilde g$ on a $\P^\Y$--full Borel set $B \in \Fc^\Y$. Indeed, note that $\Sc^{\rm c}$ is closed under taking averages.
    Taking Ces\`aro means of subsequences of all the remaining sequences of functions thus preserves their properties as well as the bound \eqref{eqn:beforebound}.
    Consequently, for simplicity we can without loss of generality assume that the original sequences $(f_1^n)_{n \in \N}$ and $(g^n)_{n \in \N}$ were convergent. 
    It follows from Fatou's lemma, which can be used thanks to the bound \eqref{bd1}, that
    \begin{align*}
     \int_{\O^\X_1} \tilde f_1 \mathrm{d}\P^\X+ \int_{\O^\Y} \tilde g \mathrm{d}\P^\Y= \int_{\O^\X \times \O^\Y} \limsup_{k \rightarrow \infty} (f^k_1+g^k) \mathrm{d}(\P^\X \otimes \P^\Y)\geq \limsup_{k \rightarrow \infty} \int_{\O^\X \times \O^\Y} (f^k_1+g^k) \mathrm{d}(\P^\X \otimes \P^\X)=S.
    \end{align*}
    As it will be convenient in the later steps of the proof, note that for every $t \in \{1,\ldots,T-1\}$ the map 
    \begin{equation} \label{eqn:bound_y} 
        \o^\Y_{1:t} \longmapsto \inf_{k \in \N} \sup_{\tilde \o^\Y \in B,\, \tilde \o^\Y_{1:t} = \o^\Y_{1:t}} g^k(\tilde \o^\Y)
    \end{equation} 
    is upper-semianalytic, see \cite[Proposition 7.47]{BeSh78}.
    Therefore, there exists a Borel subset $B^T$ of $B$ that is $\P^\Y$--full, $B^t \coloneqq {\rm proj}^{1:t}(B^T)$ is Borel measurable for every $t \in \{1,\ldots,T-1\}$, and \eqref{eqn:bound_y} is Borel measurable restricted to $B^t$. 
    Indeed, this set can be constructed as follows: Set $\hat B^{T+1} \coloneqq B$.
    By applying \cite[Lemma 7.27]{BeSh78} backwards for every $t \in \{T,\ldots, 1\},$ which gives that the restriction of \eqref{eqn:bound_y} to a $\P^\Y$--full Borel subset $\hat B^t$ of ${\rm proj}^{1:t}(\hat B^{t + 1}) \subseteq \O^\Y_{1:t}$ is Borel measurable. Taking intersections of the corresponding Borel sets $\hat B^t \times \O^\Y_{t+1:T}$ over $t$ gives $B^T.$
    We replace $\tilde f_1$ and $\tilde g$ by
    \[
        f_1(\o^\X_1) \coloneqq 
        \begin{cases}
            \tilde f_1(\o^\X_1) & \o^\X_1 \in A^1, \\
            -\infty & \text{else,}
        \end{cases}
        \text{ and }\;
        g(\o^\Y) \coloneqq
        \begin{cases}
            \tilde g(\o^\Y) & \o^\Y \in B^T, \\
            -\infty & \text{else}.
        \end{cases}
    \]

    It remains to show that $f_1$ and $g$ are admissible in the sense that there exist functions $f_t \in \Ac_{\X,t}$, $t \in \{2,\ldots,T \}$, such that $\Sc^{\rm c} \ni f_1(\o^\X_1)+ \sum_{t=2}^T f_t(\o^\X_{1:t},\o^\Y_{1:t-1})+g(\o^\Y) \leq c(X(\o^\X),Y(\o^{\Y}))$. We proceed by induction in $t$, showing that there  there exist convex combinations of $(f_t^n)$ converging to a limit $a_t.$ 

    \medskip
    \emph{Step 3.2} ($t=2$):
    Clearly, when $\o^\X_1 \notin A^1$ or $\o^\Y \notin B^T$ we can simply set $f_t(\o^\X_{1:t},\o^\Y_{1:t-1}) = 0$ for every $t \in \{2,\dots,T\}$ without interfering with admissibility.
    Therefore, the aim of this step is to show that there exist convex combinations of $(f_2^n)_{n \in \N}$ converging to some suitable limit whenever $(\o^\X_1,\o^\Y) \in A^1 \times B^T$.
    Integrating both sides in \eqref{eqn:beforebound} with respect to $K^\X_3(\o^\X_2;\mathrm{d}\o^\X_{3:T})$ gives 
    \begin{gather} \label{bound1} 
    f_1^n(\o^\X_1)+ f_2^n(\o^\X_{1:2},\o^\Y_1)+g^n(\o^\Y) \leq 0.
    \end{gather}
    In particular, we have
    \[    
        f_2^n(\o^\X_{1:2},\o^\Y_1) \leq  
        -\inf_{k \in \N} g^k(\o^\Y) 
        -\inf_{k \in \N} f_1^k(\o^\X_1)<\infty. 
    \]
    It is clear that $\inf_{k \in \N} f_1^k(\o^\X_1)$ is Borel measurable and finitely-valued on $A^1$, while (for $t = 1$) \eqref{eqn:bound_y} is also Borel measurable on $B^1$.
    We set
    \[
        \O^1 \coloneqq A^1 \times B^1.
    \]
    Hence, there is a Borel measurable function $C_2 : \O^1 \longrightarrow \R$ such that for all $\o^\X_2 \in \O^\X_2$
    \begin{equation} \label{fl} 
        f_2^n(\o^\X_{1:2},\o^\Y_1) \leq  C_2(\o^\X_1,\o^\Y_1).
    \end{equation}
    We invoke \Cref{komlos} applied to $x=(\omega^\X_1,\omega^\Y_1), y= \omega^\X_2,$ $Y^n=-f_2^n,$ $n \in \N,$ and $\P(x)(\mathrm{d}y)=K_2(\omega_1^\X;\mathrm{d}\omega_2^\X)$ with $C(x,y)=-C_2(\o^\X_1,\o^\Y_1)$ to conclude that there exist convex combinations, for $n \in \N$,
    \[ 
        a_2^{n,2}(\o^\X_{1},\,\cdot\,,\o^\Y_1) \in \text{conv}\left(f_2^n(\o^\X_{1},\,\cdot\,,\o^\Y_1),f_2^{n+1}(\o^\X_{1},\,\cdot\,,\o^\Y_1),\ldots \right), 
    \]
    with coefficients depending in a measurable way on $(\o^\X_1,\o^\Y_1) \in \O^1$ such that for each $(\o^\X_1,\o^\Y_1) \in \O^1 $ the limit $\lim_{n} a^{n,2}_2(\o^\X_1,\,\cdot\,,\o^\Y_1)$ exists $K^\X_2(\o^\X_1;\,\cdot\,)$--almost surely.
    We denote by $A^2_{\o^\X_1,\o^\Y_1} \subseteq \O^\X_{2}$ the set where this limit exists in $\R$ and define
    \begin{equation}
        \label{causal:a_2}
        a_2(\o^\X_{1:2},\o^\Y_1) \coloneqq 
        \begin{cases}
            \lim_n a^{n,2}_2(\o^\X_{1:2},\o^\Y_1) & (\o^\X_1,\o^\Y_1) \in \O^1, \o^\X_2 \in A^2_{\o^\X_1,\o^\Y_1}, \\
            -\infty & (\o^\X_1,\o^\Y_1) \in \O^1, \o^\X_2 \notin A^2_{\o^\X_1,\o^\Y_1}, \\
            0 & \text{else}.
        \end{cases}
    \end{equation}
    As \eqref{fl} provides an upper bound also to $a_2^n$, we can apply Fatou's lemma to obtain on $\O^1$
    \begin{multline}\label{mart1} 
        0=\limsup_{n \rightarrow \infty} \int_{\O^\X_2} a_2^n(\o^\X_{1:2},\o^\Y_1)K_2^\X(\o^\X_1;\mathrm{d}\o^\X_2)
        \\
        \leq 
        \int_{\O^\X_2} \limsup_{n \rightarrow \infty}  a_2^n(\o^\X_{1:2},\o^\Y_1) K_2^\X(\o^\X_1;\mathrm{d}\o^\X_2)
        =
        \int_{\O^\X_2} \tilde a_2(\o^\X_{1:2},\o^\Y_1)K_2^\X(\o^\X_1;\mathrm{d}\o^\X_2).
    \end{multline}
    Finally, we denote the same convex combinations of $f_1^{n}(\o^\X_1)$, $g^n(\o^\Y)$, resp.\ $c^n(\o^\X,\o^\Y),$ by $f_1^{n,2}(\o^\X_1,\o^\Y_1)$, $g^{n,2}(\o^\X_1,\o^\Y)$, resp.\ $c^{n,2}(\o^\X,\o^\Y)$.
    Since the coefficients depend measurably on $(\o^\X_1,\o^\Y_1),$ this doesn't interfere with the convergence of the sequences.
    Define the set
    \[
        \O^2 \coloneqq \{ (\o^\X_{1:2},\o^\Y_{1:2}) \in \O^{\X}_{1:2} \times \O^{\Y}_{1:2} \,\vert\, (\o^\X_1,\o^\Y_1) \in \O^1, \o^\X_2 \in A^2_{\o_1^\X,\o_1^\Y}, \o^\Y_{1:2} \in B^2 \}.
    \]
    Note that 
    \[
        \O^2=\{(\o^\X_{1:2},\o^\Y_{1:2}) \in \O^{\X}_{1:2} \times \O^{\Y}_{1:2} \,\vert\,(\o^\X_1,\o^\Y_1) \in \O^1, \lim_{n \rightarrow \infty} a^{n,2}_2(\o^\X_1,\cdot,\o^\Y_1) \text{ exists in }\R, \o^\Y_{1:2} \in B^2 \},
    \] 
    from which it's clear that $\O^2$ is Borel. In summary, we have found Borel measurable functions $f^{n,2}_1,a^{n,2}_2,a_2$, $(f^{n,2}_t)_{t = 3}^T \in \Ac_{\X,3:T},g^{n,2}, c^{n,2}$ and a Borel set $\O^2$ such that
    \begin{equation}
        f^{n,2}_1 + a_2^{n,2} + \sum_{t = 3}^T f_t^{n,2} + g^{n,2} \le c^{n,2} \le 0,
    \end{equation}
    and for $n \longrightarrow \infty$ and $(\o^\X,\o^\Y) \in \O^\X \times B^T$ with $(\o^\X_{1:2},\o^\Y_{1:2}) \in \O^2$ we have
    \begin{gather*}
        f^{n,2}_1(\o^\X_1,\o^\Y_1) \longrightarrow f_1(\o^\X_1), \quad
        a^{n,2}_2(\o^\X_{1:2},\o^\Y_1) \longrightarrow a_2(\o^\X_{1:2},\o^\Y_1),
        \\
        g^{n,2}(\o^\X_1,\o^\Y) \longrightarrow g(\o^\Y), \quad 
        c^{n,2}(\o^\X,\o^\Y) \longrightarrow c(\o^\X,\o^\Y),\\
        0 \le \int_{\O^\X_2} a_2(\o^\X_1,\tilde \o^\X_2, \o^\Y_1) \, K_2^\X(\o^\X_1; \mathrm d\tilde \o^\X_2) < \infty.
    \end{gather*}

    \medskip
    \emph{Step 3.3} ($t \mapsto t + 1$): In this step, we inductively construct convergent convex combinations building on the previous step.
    This means, we assume that we have found Borel functions $f^{n,t}_1, (a^{n,t}_s)_{s = 2}^t, (a_s)_{s = 2}^t$, $(f^{n,t}_s)_{s = t + 1}^T, g^{n,t}, c^{n,t}$ and a Borel set
    \[
        \O^t = \{ (\o^\X_{1:t},\o^\Y_{1:t}) \in \O^\X_{1:t} \times \O^\Y_{1:t} \,\vert\, (\o^\X_{1:t-1},\o^\Y_{1:t-1}) \in \O^{t-1}, \o^\X_t \in A^t_{\o^\X_{1:t-1},\o^\Y_{1:t-1}} \o^\Y_{1:t} \in B^t \},
    \]
    where for fixed $(\o^\X_{1:t-1},\o^\Y_{1:t-1}) \in \O^{t-1}$ the slice $A^t_{\o^\X_{1:t-1},\o^\Y_{1:t-1}} = \{ \o^\X_t \in \O^\X_t \,\vert\, \lim_{n} a^{n,t}_t(\o^\X_{1:t},\o^\Y_{1:t-1}) \text{ exists}\}$ is a $K^\X_{t}(\o^\X_{1:t};\,\cdot\;)$--full set, such that
    \begin{equation}
        \label{bd3a}
        f^{n,t}_1 + \sum_{s = 2}^t a^{n,t}_s + \sum_{s = t + 1}^T f^{n,t}_s + g^{n,t} \le c^{n,t} \le 0,
    \end{equation}
    and for $n \longrightarrow \infty$, $(\o^\X,\o^\Y) \in \O^\X \times B^T$ with $(\o^\X_{1:t}, \o^\Y_{1:t}) \in \O^t$, and $s \in \{2,\dots,t\}$ we have
    \begin{gather*}
        f^{n,t}_1(\o^\X_{1:t-1},\o^\Y_{1:t-1}) \longrightarrow f_1(\o^\X_1), \quad
        a^{n,t}_s(\o^\X_{1:\max(s,t-1)},\o^\Y_{1:t-1}) \longrightarrow a_s(\o^\X_{1:s},\o^\Y_{1:s-1})
        \\
        g^{n,t}(\o^\X_1,\o^\Y) \longrightarrow g(\o^\Y), \quad c^{n,t}(\o^\X,\o^\Y) \longrightarrow c(\o^\X,\o^\Y),\\
        0 \le \int_{\O^\X_s} a_s(\o^\X_{1:s-1},\tilde \o^\X_s, \o^\Y_{1:s-1}) \, K_s^\X(\o^\X_{1:s-1};\mathrm d\tilde \o^\X_s) < \infty.
    \end{gather*}
    \medskip
    Integrating \eqref{bd3a} with respect to $K^\X_{t+2}(\o^\X_{1:t+1};\mathrm{d}\o^\X_{t+2:T})$ yields
    \[
        f_1^{n,t} + \sum_{s=2}^{t} a_s^{n,t} + f_{t + 1}^{n,t} + g^{n,t} \leq 0.
    \]
    Hence,
    \begin{align*}
        f_{t+1}^{n,t}&(\o^\X_{1:t+1},\o^\Y_{1:t}) \\
        &\leq - \Big(f_1^{n,t}(\o^\X_{1:t-1},\o^\Y_{1:t-1}) + \sum_{s=2}^{t} a_s^{n,t}(\o^\X_{1:\max(s,t-1)},\o^\Y_{1:t-1}) +g^{n,t}(\o^\X_{1:t-1},\o^\Y) \Big) \\
        & \leq - \inf_{k \in \N}\Big( f_1^{k,t}(\o^\X_{1:t-1},\o^\Y_{1:t-1})+ \sum_{s=2}^{t} a_s^{k,t}(\o^\X_{1:\max(s,t-1)},\o^\Y_{1:t-1}) \Big) - \inf_{k \in \N} g^{k}(\o^\Y),
    \end{align*}
    where we used for the last inequality that $g^{n,t}(\o^\X_{1:t-1},\o^\Y)$ was constructed as a convex combination of $(g^k(\o^\Y))_{k \in \N}$.
  As in the previous step, we find a Borel measurable function $C_t \colon \O^{t} \to \R$ such that for all $(\o^\X_{1:t},\o^\Y_{1:t}) \in \O^{t}$ and $\o^\X_{t + 1} \in \Omega^\X_{t + 1}$ we have
    \begin{equation} \label{eqn:bound_t}f_{t + 1}^{n,t}(\o^\X_{1:t + 1},\o^\Y_{1:t}) \leq C_t(\o^\X_{1:t},\o^\Y_{1:t}),\quad n \in \N.
    \end{equation}
    Therefore, we once again exploit \Cref{komlos} applied to $x=(\omega^\X_{1:t},\omega^\Y_{1:t}), y= \omega^\X_{t + 1},$ $Y^n=-f^{n,t}_{t + 1},$ $n \in \N,$ and $\P(x)(\mathrm{d}y)=K^\X_{t + 1}(\omega_{1:t}^\X;\mathrm{d}\omega_{t + 1}^\X)$ with $C(x,y)=-C_t(\o^\X_{1:t},\o^\Y_{1:t})$ and proceed with the same construction as in the previous step. 
    Hence, there is a sequence of convex combinations
    \[
        a_{t + 1}^{n,t + 1}(\o^\X_{1:t},\,\cdot\,,\o^\Y_{1:t}) \in {\rm conv}(f_{t + 1}^{n,t}(\o^\X_{1:t},\,\cdot\,,\o^\Y_{1:t}),f_{t + 1}^{n+1,t}(\o^\X_{1:t},\,\cdot\,,\o^\Y_{1:t}),\ldots)
    \] 
    with coefficients depending measurably on $(\o^\X_{1:t},\o^\Y_{1:t})$ such that for each $(\o^\X_{1:t},\o^\Y_{1:t}) \in \O^t$ we have that the limit $\lim_n a^{n,t + 1}_{t + 1}(\o^\X_{1:t},\cdot,\o^\Y_{1:t})$ exists $K^\X_{t + 1}(\o^\X_{1:t};\,\cdot\,)$-almost surely. 
    Denote the set where the limit exists by $A^{t + 1}_{\o^\X_{1:t},\o^\Y_{1:t}} \subseteq \O^\X_{t + 1}$ and define
    \[
        a_{t + 1}(\o^\X_{1:t + 1},\o^\Y_{1:t}) \coloneqq
        \begin{cases}
            \lim_n a^{n,t + 1}_{t + 1}(\o^\X_{1:t + 1},\o^\Y_{1:t}) & (\o^\X_{1:t},\o^\Y_{1:t}) \in \O^t, \o^\X_{t + 1} \in A^{t + 1}_{\o^\X_{1:t},\o^\Y_{1:t}}, \\
            -\infty & (\o^\X_{1:t},\o^\Y_{1:t}) \in \O^t, \o^\X_{t + 1} \notin A^{t + 1}_{\o^\X_{1:t},\o^\Y_{1:t}}, \\
            0 & \text{else}.
        \end{cases}
    \]
    Due to Fatou's lemma we have on $\O^{t}$
    \begin{multline}\label{mart2} 
        0=\limsup_{n \rightarrow \infty} \int_{\O^\X_{t+1}} a_{t + 1}^{n, t + 1}(\o^\X_{1:t},\tilde \o^\X_{t + 1},\o^\Y_{1:t})K_t^\X(\o^\X_{1:t};\mathrm{d}\tilde \o^\X_{t + 1})\\
        \leq \int_{\O^\X_{t+1}}  \limsup_{n \rightarrow \infty} a_{t + 1}^{n, t + 1}(\o^\X_{1:t},\tilde \o^\X_{t + 1}, \o^\Y_{1:t})K_t^\X(\o^\X_{1:t};\mathrm{d}\tilde \o^\X_{t + 1})= \int_{\O^\X_{t+1}} a_{t + 1}(\o^\X_{1:t},\tilde \o^\X_{t + 1},\o^\Y_{1:t})K_t^\X(\o^\X_{1:t};\mathrm{d}\tilde \o^\X_{t + 1}).
    \end{multline} We again denote the convex combinations with coefficients as above of $f_1^{n,t},g^{n,t},$ $(a_{s}^{n,t})_{s = 2}^t,$ $(f_s^{n, t})_{s = t + 1}^T$ and $c^{n,t}$ by $f_1^{n,t + 1},g^{n,t + 1}, (a_{s}^{n,t + 1})_{s = 2}^{t + 1},(f_s^{n, t + 1})_{s = t + 2}^T$ and $c^{n,t + 1},$ respectively.
    Since the coefficients depend measurably on $(\o^\X_{1:t},\o^\Y_{1:t}),$ this doesn't interfere with the convergence of the sequences.
    So, we conclude this step by defining
    \[
        \O^{t + 1} \coloneqq \{ (\o^\X_{1:t+1},\o^\Y_{1:t+1}) \in \O^\X_{1:t+1} \times \O^\Y_{1:t+1} \,\vert\, (\o^\X_{1:t},\o^\Y_{1:t}) \in \O^t, \o^\X_{t + 1} \in A^{t + 1}_{\o^\X_{1:t},\o^\Y_{1:t}}, \o^\Y_{1:t + 1} \in B^{t + 1} \},
    \]
    which is as in the previous step Borel, and observe that for fixed $(\o^\X_{1:t},\o^\Y_{1:t}) \in \O^t$ the slice $A^{t + 1}_{\o^\X_{1:t},\o^\Y_{1:t}} = \{ \o_{t + 1}^\X \in \O^\X_{t+1} \,\vert\, \lim_n a^{n, t + 1}_{t + 1} \text{ exists}\}$ is $K^\X_{t + 1}(\o^\X_{1:t};\,\cdot\,)$--full,
    \[
        f^{n,t + 1}_1 + \sum_{s = 2}^{t + 1} a_s^{n,t+1} + \sum_{s =  t +2}^T f_s^{n,t+1} + g^{n,t+1} \le c^{n,t + 1} \le 0,
    \]
    and for $n \longrightarrow \infty$, $(\o^\X,\o^\Y) \in \O^\X \times B^T$ with $(\o^\X_{1:t+1}, \o^\Y_{1:t+1}) \in \O^{t + 1}$, and $s \in \{2,\dots,t + 1\}$ we have
    \begin{gather*}
        f^{n,t+1}_1(\o^\X_{1:t},\o^\Y_{1:t}) \longrightarrow f_1(\o^\X_1), \quad
        a^{n,t + 1}_s(\o^\X_{1:\max(s,t)},\o^\Y_{1:t}) \longrightarrow a_s(\o^\X_{1:s},\o^\Y_{1:s-1})
        \\
        g^{n,t + 1}(\o^\X_1,\o^\Y) \longrightarrow g(\o^\Y), \quad c^{n, t + 1}(\o^\X,\o^\Y) \longrightarrow c(\o^\X,\o^\Y),\\
        0 \le \int_{\O^\X_s} a_s(\o^\X_{1:s-1},\tilde \o^\X_s, \o^\Y_{1:s-1}) \, K_s^\X(\o^\X_{1:s-1};\mathrm d\tilde \o^\X_s)<\infty.
    \end{gather*}
    
    \medskip \emph{Step 3.4}: At the end, we have constructed convex combinations 
    satisfying
    \begin{equation}
        \label{eq:causal.step.3.4.ineq}
        f_1^{n,T} + \sum_{t = 2}^T a^{n,T}_t + g^{n,T} \le c^{n,T} \le 0,
    \end{equation}
    and found a Borel set $\O^T$ given by
    \[
        \O^T = \{ (\o^\X,\o^\Y) \in \O^\X \times \O^\Y \,\vert\, \forall t \in \{1,\dots,T\}, \o^\X_t \in A^t_{\o^\X_{1:t-1},\o^\Y_{1:t-1}}, \o^\Y \in B^T \},
    \]
    where on $\O^T$ the slices $A^t_{\o^\X_{1:t-1},\o^\Y_{1:t-1}}$ are $K^\X_t(\o^\X_{1:t-1};\,\cdot\;)$--full and for $n \longrightarrow \infty$ and $t \in \{1,\ldots,T\}$
    \begin{gather} \nonumber
        f^{n,T}_1(\o^\X_{1:T-1},\o^\Y_{1:T-1}) \longrightarrow f_1(\o^\X_1), \quad
        a^{n,T}_t(\o^\X_{1:\max(t,T-1)},\o^\Y_{1:T-1}) \longrightarrow a_t(\o^\X_{1:t},\o^\Y_{1:t-1})
        \\ \nonumber
        g^{n,T}(\o^\X_{1:T-1},\o^\Y) \longrightarrow g(\o^\Y), \quad c^{n,T}(\o^\X,\o^\Y) \longrightarrow c(\o^\X,\o^\Y),
        \\
        \label{mart3}
        0 \le \int_{\O^\X_t} a_t(\o^\X_{1:t-1},\tilde \o^\X_t, \o^\Y_{1:t-1}) \, K_t^\X(\o^\X_{1:t-1};\mathrm d\tilde \o^\X_t) < \infty.
    \end{gather}
    Passing to the limit in \eqref{eq:causal.step.3.4.ineq} yields on $\O^T$
    \[ 
        f_1(\o^\X_1)+\sum_{t=2}^T a_t(\o^\X_{1:t},\o^\Y_{1:t-1})+g(\o^\Y)\leq c(\o^\X,\o^\Y), 
    \]
    while on the complement of $\O^T$ the left-hand side is $-\infty$.
    Thanks to \eqref{mart3} we get
    \[ 
        f_1(\o^\X_1)+\sum_{t=2}^T \bigg[a_t(\o^\X_{1:t},\o^\X_{1:t-1})-\int_{\O^\X_t} a_t(\o^\X_{1:t},\o^\Y_{1:t-1})K^\X_t(\o^\X_{1:t-1};\mathrm{d}\o^\X_t)\bigg]+g(\o^\Y)\leq c(X(\o^\X),Y(\o^\Y)).
    \]
    
    \medskip We have hence constructed admissible functions that attain $D(c)$ and $\lim_{n \to \infty} D(c^n) = S = D(c)$.

\medskip
    \emph{Step 4} (Choquet): Finally, we can invoke the functional version of the Choquet capaticability theorem provided in \cite[Proposition 2.1]{BaChKu19} and obtain
    \[
        D(c) = \inf \{ D(\hat c) \,\vert\, \hat c \ge c,\; \hat c \text{ is l.s.c.\ and lower-bounded} \}
        = \inf \{ V(\hat c) \,\vert\, \hat c \ge c,\; \hat c \text{ is l.s.c.\ and lower-bounded} \}
        = V(c),
    \]
    where the second equality is due to the standard duality for causal optimal transport with lower-semicontinuous cost function, see \Cref{thm:caus_dual}.
    This concludes the proof.
\end{proof}

Next, we give a parallel result for duality and dual attainment for adapted optimal transport.
Similarly to the causal case, we require only minimal assumptions. 
The proof follows similar steps to those of \Cref{existDualC}, but each step in the induction is divided into two further sub-steps.

\begin{theorem}[General case---adapted transport] \label{existDualBC} 
    Let $c : \O^\X \times \O^\Y \longrightarrow \R \cup \{ -\infty \}$ be measurable
    and $\ell \in \L^1(\P^\X)$, $k \in \L^1(\P^\Y)$ be such that $c(\o^\X, \o^\Y) \leq \ell(\o^\X)+k(\o^\Y),$ $(\o^\X,\o^\Y) \in \O^\X \times \O^\Y$.
    Then we have
    \[
        \Ac\Wc_{c}(\X,\Y) = \sup \bigg\{ \int_{\O^\X \times \O^\Y} s(\o^\X,\o^\Y) (\P^\X \otimes \P^\Y)(\mathrm{d}\o^\X,\mathrm{d}\o^\Y) \,\bigg\vert\, s \in \Sc^{\rm bc},\;  s \leq c  \bigg\}. 
    \]
    Moreover, if either side is finite, then there is $s^\star \in \Sc^{\rm bc}$ with $s^\star \le c$ and
    \[
        \Ac\Wc_{c}(\X,\Y) = \int_{\O^\X \times \O^\Y} s^\star \mathrm{d}(\P^\X \otimes \P^\Y).
    \]
\end{theorem}

\begin{proof}
The proof follows analogous arguments as in the causal case.
We therefore provide a sketch of the main steps. 
Assume without loss of generality that the cost function is non-positive. 
The general case can be recovered in the same way as before.

\medskip
    \emph{Step 1 and 2} (Continuity of $D$ and $V$ from below and of $V$ from above): We denote
    \begin{align*}
        D(c) \coloneqq  \sup_{s \in \Sc^{\rm bc},\, s \le c} \int_{\O^\X \times \O^\Y} s  \mathrm{d}(\P^\X\otimes\P^\Y), \quad
        V(c) \coloneqq  \Ac\Wc_c(\X,\Y).
    \end{align*}
    In the first step we show that $V$ and $D$ are continuous from below on $\mathcal C^-$ and $V$ is continuous from above on $\mathcal M^-$. This can be done in the very same fashion as before.

\medskip
    \emph{Step 3} (Main part): Next, we show that $D$ is continuous from above on $\Mc^-$ and $D(c)$ is attained. To that end, let $(c^n)_{n \in \N}$ be a sequence in $\Mc^-$ such that $c^n \searrow c$ for some $c \in \Mc^-$ such that $\inf_{n \in \N} D(c^n)>-\infty$. Let us denote 
    \[ 
        S^n \coloneqq D(c^n)= \sup_{s \in \Sc^{\rm bc},\, s \le c} \int_{\O^\X \times \O^\Y} s  \mathrm{d}(\P^\X\otimes\P^\Y),\;{\rm and}\; S \coloneqq \inf_{n \in \N} S^n. 
    \]

Let for any $n \in \N,$ let $s^n \in \Sc^{\rm bc}$, be an $1/n$-maximizer for $D(c^n)$. This means that we have for all $(\o^\X,\o^\Y) \in \O^\X \times \O^\Y$
 \begin{align*}
    s^n(\o^\X,\o^\Y)=f_1^n(\o^\X_1) + \sum_{t=2}^T f_t^n(\o^\X_{1:t},\o^\Y_{1:t-1})+g^n_1(\o^\Y_1) + \sum_{t=2}^T g_t^n(\o^\X_{1:t-1},\o^\Y_{1:t}) \leq c^n(\o^\X,\o^\Y), \\
    \int_{\O^\X_1 \times \O^\Y_1} \big( f_1^n+g^n_1\big) \mathrm{d}(\P^\X \otimes \P^\Y) \geq S^n-\frac{1}{n},\; \text{and, moreover,}\;
    \lim_{n \rightarrow \infty} \int_{\O^\X_1 \times \O^\Y_1} \big( f_1^n+g^n_1\big) \mathrm{d}(\P^\X \otimes \P^\Y)= S.
\end{align*}
\medskip
 \emph{Step 3.1} ($t=1$): Integrating the following inequality 
\begin{align}\label{boundBC} f_1^n(\o^\X_1) + \sum_{t=2}^T f_t^n(\o^\X_{1:t},\o^\Y_{1:t-1})+g^n_1(\o^\Y_1) + \sum_{t=2}^T g_t^n(\o^\X_{1:t-1},\o^\Y_{1:t})\leq c^n(\o^\X, \o^\Y) \leq 0,
\end{align} 
with respect to $K^\X_2(\o^\X_1;\mathrm{d}\o^\X_{2:T})\otimes K^\Y_2(\o^\Y_1;\mathrm{d}\o^\Y_{2:T})$ yields 
$f_1^n(\o^\X_1)+g^n_1(\o^\Y_1) \leq 0$.
As in the proof of Theorem \ref{existDualC}, we can bound the $\L^1(\Fc^\X_1,\P^\X),$ resp.\ $\L^1(\Fc^\Y_1,\P^\Y)$-norms of the respective sequences and apply Koml\'{o}s' lemma to find, up to taking the Ces\`aro means of a subsequence, functions $f_1 \in \L^1(\Fc^\X_1,\P^\X)$ and $g_1 \in \L^1(\Fc^\Y_1,\P^\Y)$ with $f_1^n \longrightarrow f_1$ on a $\P^\X\text{--full}$ Borel set $A^1$ and $g^n_1 \longrightarrow g_1$ on a $\P^\Y\text{--full}$ Borel set $B^1$, while $f_1$ and $g_1$ are both set to $-\infty$ outside of these sets. We proceed by induction in $t$.

\medskip
 \emph{Step 3.1} ($t=2$): $(a)$ 
Integrating both sides of \eqref{boundBC} with respect to $K^\X_3(\o^\X_{1:2};\mathrm{d}\o^\X_{3:T})\otimes K^\Y_2(\o^\Y_1;\mathrm{d}\o^\Y_{2:T})$ gives \[f_1^n(\o^\X_1)+ f_2^n(\o^\X_{1:2},\o^\Y_1)+g^n_1(\o^\Y_1) \leq 0.\]
Hence, as in the proof of Theorem \ref{existDualC}, we get convex combinations 
\[ 
    a_2^{\X,n}(\o^\X_{1},\cdot,\o^\Y_1) \in \text{conv}\left(f_2^n(\o^\X_{1},\cdot,\o^\Y_1),f_2^{n+1}(\o^\X_{1},\cdot,\o^\Y_1),\ldots \right) 
\] 
with coefficients depending measurably on $(\o^\X_1,\o^\Y_1)$ such that for each $(\o^\X_1,\o^\Y_1) \in A^1 \times B^1 \eqqcolon \O^1$ the limit $\lim_n a_2^{\X,n}$ exists on a $K^\X_2(\o^\X_1;\,\cdot\;)$--full set $A^2_{\o^\X_1,\o^\Y_1}$.
Next, define $a_2$ as in \eqref{causal:a_2}.
Once again, we can argue that taking the very same convex combinations of the remaining sequences of functions, including the sequence $(c^n)_{n \in \N},$ doesn't change their convergence properties nor other relevant characteristics.

\medskip $(b)$ Integrating both sides of \eqref{boundBC} with respect to $K^\X_3(\o^\X_{1:2};\mathrm{d}\o^\X_{3:T})\otimes K^\Y_3(\o^\Y_{1:2};\mathrm{d}\o^\Y_{3:T})$ gives 
\begin{align*} f_1^n(\o^\X_1)+ f_2^n(\o^\X_{1:2},\o^\Y_1)+g^n_1(\o^\Y_1)+g_2^n(\o^\X_{1},\o^\Y_{1:2}) \leq 0. 
\end{align*} 
Replacing $f^n_1, f^n_2, g_1^n, g^n_2$ with their convex combinations with the very same weights as in $(a)$ preserves this inequality. In particular, we can find a suitable upper bound for the sequence $f_2^n$ and, thus, we again find convex combinations $a_2^{\Y,n}(\o^\X_{1},\o^\Y_{1},\,\cdot\,)$ with coefficients depending measurably on $(\o^\X_1,\o^\Y_1)$ such that on $\O^1$ the limit $\lim_n a_2^{\Y,n}$ exists on a $K^\Y_2(\o^\Y_1;\,\cdot\,)$--full set $B^2_{\o^\X_1,\o^\Y_1}$.
We can thus take the same convex combinations of the remaining sequences of functions, define $a_2^\Y$ analogously as in \eqref{causal:a_2} using $B^2_{\o^\X_1,\o^\Y_1}$ and  $\lim_n a_2^{\Y,n}$, and proceed by setting
\[
    \O^2 \coloneqq \{ (\o^\X_{1:2},\o^\Y_{1:2}) \,\vert\, (\o^\X_1,\o^\Y_1) \in \O^1, \o^\X_2 \in A^2_{\o^\X_1,\o^\Y_1}, \o^\Y_2 \in B^2_{\o^\X_1,\o^\Y_1} \}.
\]

\medskip  \emph{Steps 3.3 -- 3.4 and 4} ($t \mapsto t + 1$ and Choquet): Finally, the inductive step $t \mapsto t + 1$ as well as the remaining parts can be done in a similar fashion as before, by repeating the steps $(a)$ and $(b)$ appropriately.
\end{proof}

\subsection{Multicausal optimal transport} \label{sec:general_multi}
We proceed to formulate the multimarginal case as well as the barycenter problem. In this section, we thus consider $N \in \N$ filtered processes
\[\X^i=(\O^i,\Fc^i,\F^i,\P^i,X^i),\quad i \in \{1,\ldots,N\}, \] where, as before,
$(\O^i,\Fc^i,\F^i,\P^i)$ is a filtered probability space and $X^i=(X^i_t)_{t=1}^T$ is an $\F^i$-adapted process such that $X_t^i \in \Xc^i_t$, where $\Xc^i_t$ is a given Polish space. Analogously as before, we use the notation $\Xc^i=\prod_{t=1}^T \Xc_t^i$ and we work under the following standing assumption. We once again insist that this assumption is in most cases without loss of generality.

\begin{assumption} 
    For every $i \in \{1,\ldots,N\},$ we have that the probability space $\O^i$ is the product of some Polish spaces $\O^i_t$, $t \in \Tc$, \emph{i.e.}\ $\O^i = \prod_{t=1}^T \O^i_t$.
    The filtration $\F^i$ is the corresponding canonical filtration generated by the coordinate projections on $\prod_{t=1}^T \O^i_t$. 
    That is to say, $\Fc^i_t = \bigotimes_{s=1}^t\Bc(\O^i_s)\otimes \bigotimes_{s=t+1}^T \{\emptyset, \O^i_s \}$ for $t \in \Tc$.
    Further, we have that $\Fc^i=\Fc^i_T.$
\end{assumption}

Analogously as before, we use the notation $\Xc^i_{1:t} \coloneqq \prod_{s=1}^t \Xc_s^i$ and, similarly, $\O^i_{1:t}\coloneqq \prod_{s=1}^t \O^i_s$. Generic elements of $\O^i$, resp.\ $\O^i_t$, will be denoted by $\o^i$, resp.\ $\o^i_t$, and we write $\o^i_{1:t}\coloneqq (\o^i_1,\ldots,\o^i_t) \in \O^i_{1:t}$ for generic elements of $\O^i_{1:t},$ $t \in \Tc$. Finally, we denote by $K^i_t: \O^i_{1:t-1}\longrightarrow \Pc(\O^i_{t:T})$ a regular version of $\P^i(\mathrm{d}\omega^i_{t:T}\vert \Fc_{t-1}^i)$ for $t \in \{2,\ldots,T\}$ and  set $\Fc^i_0\coloneqq \{ \O^i,\emptyset \}.$ For a vector $t \in \{0,\ldots,T\}^N$ we write 
\[ 
    \bar{\Fc}_t\coloneqq \bigotimes_{i=1}^N \Fc^i_{t} \subseteq \Bc\Big(\prod_{i=1}^N \O^i\Big).
\]

\begin{definition} We denote by $\cpl(\X^1,\ldots,\X^N)$ the set of all probability measures on $\prod_{i=1}^N \O^i$ with marginals $\P^1,\ldots,\P^N$ and call its elements couplings. We say that a coupling $\pi \in \cpl(\X^1,\ldots,\X^N)$ is multicausal, denoted by $\cplmc(\X^1,\ldots,\X^N)$, if for any $i \in \{1,\ldots,N\}$ and any $t \in \Tc$ we have that $\bar{\Fc}_{T}$ is conditionally $\pi$-independent of $\bar{\Fc}_{t}$ given $\bar{\Fc}_{{t}}$. 
\end{definition}

\medskip Let $c: \prod_{i=1}^N \O^i \longrightarrow \R$ be a measurable cost function. We consider the following optimal transport problem
\[ 
    \inf_{\pi \in \cplmc(\X^1,\ldots,\X^N)} \int_{\O^1 \times \ldots \times \O^N} c(\o^1,\ldots,\o^N) \pi(\mathrm{d}\o^1,\ldots,\mathrm{d}\o^N).
\]

\begin{proposition} \label{prop:multi_attain} 
    Let $c: \prod_{i=1}^N \O^i \longrightarrow \R \cup \{+\infty\}$ be a lower-semicontinuous function and $\ell^i \in \L^1(\P^i)$ be such that $\sum_{i=1}^N \ell^i(\o^i)\leq c(\o^1,\ldots,\o^N),$ $(\o^1,\ldots,\o^N) \in \prod_{i=1}^N \O^i.$ 
    Then the problem 
    \[ 
        \inf_{\pi \in \cplmc(\X^1,\ldots,\X^N)} \int_{\O^1 \times \ldots \times \O^N} c(\o^1,\ldots,\o^N) \pi(\mathrm{d}\o^1,\ldots,\mathrm{d}\o^N)
    \] 
    is attained.
\end{proposition}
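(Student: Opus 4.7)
The plan is to use the direct method of calculus of variations: take a minimizing sequence, extract a weakly convergent subsequence using compactness, check that the limit lies in $\cpl_{\mathbf{J}}(\X^1,\ldots,\X^N)$, and verify lower semicontinuity of the cost functional.

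First I would pick a minimizing sequence $(\pi_n)_{n \in \N} \subset \cpl_{\mathbf{J}}(\X^1,\ldots,\X^N)$. Because every $\pi_n$ has the fixed marginals $\P^1,\ldots,\P^N$, the individual marginals form tight families, hence by the standard argument (the projection of a tight set of marginals) the family $(\pi_n)_{n\in\N}$ is tight on $\prod_{i=1}^N \O^i$. Prokhorov's theorem then yields a subsequence, still denoted $(\pi_n)$, converging weakly to some $\pi^\star \in \Pc(\prod_{i=1}^N \O^i)$. Since the coordinate projections are continuous and the marginals are continuous functionals, $\pi^\star$ still has marginals $\P^1,\ldots,\P^N$.

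The second step is to show $\pi^\star \in \cpl_{\mathbf{J}}(\X^1,\ldots,\X^N)$. This is the main technical point. The $\mathbf{J}$-causality can be reformulated (exactly as for the causal/bicausal case in \Cref{sec:general_adapt_causal}) as a family of integral identities: for every $i \in \{1,\ldots,N\}$, $t \in \Tc$, and every tuple of bounded continuous test functions $\varphi^i : \O^i \to \R$, $\psi^j : \O^j_{1:t} \to \R$ for $j \in J^i$, one has
\[
\int \varphi^i \prod_{j \in J^i} \psi^j \, \mathrm{d}\pi \;=\; \int \E^\pi\!\big[\varphi^i \,\big\vert\, \pmb{\Fc}_{te_i}\big]\prod_{j \in J^i} \psi^j \, \mathrm{d}\pi,
\]
where the conditional expectation can be rewritten in terms of the transition kernel $K^i_{t+1}$ of $\P^i$ and thus depends only on the marginal, not on $\pi$. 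Both sides are then continuous in $\pi$ under weak convergence because integrands are bounded continuous functions of $\omega$. Hence the identities pass to the weak limit $\pi^\star$, giving $\mathbf{J}$-causality. An analogous, more formal characterization is already implicit in the dual potential sets $\Ac_{\X,t}$ and $\Ac_{\Y,t}$ used in \Cref{sec:general_adapt_causal}, and its multimarginal analogue will be used in \Cref{sec:general_multi}.

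Finally, for the lower semicontinuity of the cost, define $\tilde c(x^1,\ldots,x^N) \coloneqq c(x^1,\ldots,x^N) - \sum_{i=1}^N \ell^i(x^i)$. By assumption $\tilde c \ge 0$ and $\tilde c$ is lower semicontinuous. Since the marginals of all $\pi_n$ and $\pi^\star$ coincide,
\[
\int c \,\mathrm{d}\pi \;=\; \int \tilde c \,\mathrm{d}\pi \;+\; \sum_{i=1}^N \int \ell^i(X^i)\,\mathrm{d}\P^i,
\]
so it suffices to show weak lower semicontinuity of $\pi \mapsto \int \tilde c \,\mathrm{d}\pi$, which follows from the Portmanteau theorem applied to the non-negative lower-semicontinuous integrand $\tilde c \circ (X^1,\ldots,X^N)$. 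Combining this with $\mathbf{J}$-causality of $\pi^\star$ gives
\[
\int c \,\mathrm{d}\pi^\star \;\le\; \liminf_{n\to\infty} \int c \,\mathrm{d}\pi_n \;=\; \inf_{\pi \in \cpl_{\mathbf{J}}(\X^1,\ldots,\X^N)} \int c \,\mathrm{d}\pi,
\]
so $\pi^\star$ is an optimizer. The only nontrivial obstacle is the verification that weak convergence preserves the $\mathbf{J}$-causality constraint; I expect this to be handled cleanly by the test-function characterization outlined above, which is the multimarginal counterpart of the closedness arguments already used for $\cplc$ and $\cplbc$ earlier in the paper.
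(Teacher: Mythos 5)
Your strategy (minimizing sequence, tightness from the fixed marginals, Prokhorov, weak closedness of $\cpl_{\mathbf{J}}$, lower semicontinuity of the cost functional) is the standard direct-method argument; the paper offers no proof of this proposition and cites the two-marginal analogue to the literature, so this is also the route it implicitly relies on. However, two of your justifications fail as written. In the closedness step you claim that both sides of the test identity are continuous in $\pi$ under weak convergence ``because the integrands are bounded continuous functions of $\omega$''. The left side is, but the right side contains $\E^{\pi}[\varphi^i\mid\pmb{\Fc}_{te_i}]$, which --- as you correctly observe --- equals the kernel integral $\int\varphi^i(\o^i_{1:t},\tilde\o^i_{t+1:T})\,K^i_{t+1}(\o^i_{1:t};\mathrm{d}\tilde\o^i_{t+1:T})$ and depends only on $\P^i$, but is in general only a Borel function of $\o^i_{1:t}$: no continuity of the disintegrations is assumed in the multimarginal setting (that hypothesis appears only for the barycenter results). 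Hence the Portmanteau theorem does not apply to the right-hand side directly. The standard repair exploits the fixed marginal: approximate the measurable factor in $\L^1(\Fc^i_t,\P^i)$ by a bounded continuous function; the resulting error is controlled uniformly over the whole sequence of couplings because they all have $\O^i$-marginal $\P^i$, and then the identity does pass to the limit.

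The second gap is the claim that $\tilde c\coloneqq c-\sum_{i}\ell^i$ is lower semicontinuous. The $\ell^i$ are only assumed integrable, so $-\sum_i\ell^i$ is merely measurable and $\tilde c$ need not be l.s.c.; this is exactly why statements of this type in the literature either require continuous (or upper semicontinuous) minorants or use a truncation. A correct route: for $m\in\N$ the function $c\vee(-m)$ is l.s.c.\ and bounded below, so $\liminf_n\int c\vee(-m)\,\mathrm{d}\pi_n\geq\int c\vee(-m)\,\mathrm{d}\pi^\star\geq\int c\,\mathrm{d}\pi^\star$, while $0\leq c\vee(-m)-c=(-m-c)^+\leq\sum_i\big((\ell^i)^- - m/N\big)^+$ is a sum of single-variable functions whose integral under any coupling equals $\sum_i\int\big((\ell^i)^--m/N\big)^+\mathrm{d}\P^i$ and tends to $0$ as $m\to\infty$, uniformly in $n$. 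Combining the two estimates and letting $m\to\infty$ gives the desired lower semicontinuity. Both gaps are repairable by these standard arguments, but as written neither the closedness step nor the semicontinuity step is justified.
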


\begin{proof} The proof is standard, see \emph{e.g.}\ \cite[Remark 2.4]{AcKrPa23a}.
\end{proof} 
Similarly as before, we are interested in the dual problem. To that end, let us define for $i \in \{1,\ldots,N\}$
\[\Ac_{i,1} \coloneqq \{ f_1^i(\o^1,\ldots,\o^N)=a_1(\o^i_1) : a_1 \in \L^1(\Fc^i_1,\P^i) \}.\] and, for $t \in \{2,\ldots,T\}$
\begin{multline*} 
    \Ac_{i,t}^{\rm mc}\coloneqq \bigg\{ f^i_t(\o^1,\ldots,\o^N)=a^i_t(\o^i_{1:t},(\o^j_{1:t-1})_{j \neq i})-\int_{\O^i_t} a^i_t(\o^i_{1:t-1},\tilde{\o}^i_t,(\o^j_{1:t-1})_{j \neq i}) K^i_t(\o^i_{1:t-1};\mathrm d \tilde{\o}^i_t) \,\bigg\vert\,
    \\
    a_t^i\text{ is Borel measurable and }
    a^i_t(\o^i_{1:t-1},\,\cdot\,,(\o^j_{1:t-1})_{j \neq i}) \in  \L^1(\Bc(\O_t^i),K_t^i\big(\o^i_{1:t-1};\,\cdot\,) \big) \bigg\}.
\end{multline*}

Further, also analogously as before,
\begin{align} \label{eqn:dual_potent_multi}
\Sc_i^{\rm mc} \coloneqq \bigg\{ s(\o^1,\ldots,\o^N)=f^i_1(\o^i_1) + \sum_{t=2}^T f^i_t(\o^i_{1:t},(\o^j_{1:t-1})_{j \neq i}) \,\bigg\vert\, f^i_t \in \Ac_{i,t},\; t \in \{1,\ldots,T\} \bigg\}.  
\end{align}
Finally, we set $\Sc^{\rm mc} \coloneqq \Sc_1^{\rm mc} \oplus \Sc_2^{\rm mc} \oplus \ldots \oplus \Sc_N^{\rm mc}$. We have the following duality.

\begin{theorem} 
    Let $c : \prod_{i = 1}^N \O^i \longrightarrow \R \cup \{ +\infty \}$ be a lower-semicontinuous and lower-bounded function. Then we have
    \begin{equation*} 
        \inf_{\pi \in \cplmc(\X^1,\ldots,\X^N)} \int_{\O^1 \times \ldots \times \O^N} c \mathrm{d}\pi  =\sup \bigg\{ \int_{\O^1 \times \ldots \times\O^N} s \mathrm{d}(\P^1\otimes \ldots\otimes \P^N) \,\bigg\vert\, s \in \Sc^{\rm mc},\; s \leq c \bigg\}. \end{equation*}
    \end{theorem}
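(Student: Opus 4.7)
My plan is to mirror the scheme of \Cref{thm:caus_dual} (proved along the lines of \cite[Proposition~5.2]{EcPa22}) in the multimarginal $\mathbf{J}$-causal setting. The proof splits into weak duality, which is a direct computation, and strong duality, which I would obtain via a minimax/Fenchel--Rockafellar argument together with weak compactness of $\cpl_{\mathbf{J}}(\X^1,\ldots,\X^N)$.

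For the inequality $\geq$, fix $s=\sum_{i=1}^N s_i \in \Sc^{\mathbf{J}}$ with
\[ s_i(\o^1,\ldots,\o^N)=f^i_1(\o^i_1)+\sum_{t=2}^T f^i_t(\o^i_{1:t},(\o^j_{1:t-1})_{j\in J^i}),\qquad f^i_t \in \Ac_{i,t}, \]
and $s\leq c(X^1,\ldots,X^N)$ $\cpl_{\mathbf{J}}(\X^1,\ldots,\X^N)$--quasi-surely. For each $t\geq 2$, $f^i_t$ is, by definition, the $K^i_t$-compensator of some $a^i_t$. The $\mathbf{J}$-causality of $\pi$ ensures that the regular $\pi$-conditional law of $\o^i_t$ given $\pmb{\Fc}_{(t-1)e_i}\otimes \pmb{\Fc}_{(t-1)e(J^i)}$ equals $K^i_t(\o^i_{1:t-1};\,\cdot\,)$, hence $\E^\pi[f^i_t]=0$; the same identity holds under $\bigotimes_{i=1}^N \P^i$ by Fubini. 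Consequently
\[ \int s \mathrm{d}\pi=\sum_{i=1}^N \int f^i_1 \mathrm{d}\P^i=\int s \mathrm{d}\Big(\bigotimes_{i=1}^N\P^i\Big)\leq \int c(X^1,\ldots,X^N) \mathrm{d}\pi, \]
and passing to supremum in $s$ and infimum in $\pi$ yields the inequality.

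For the reverse inequality, I would observe that $\cpl(\X^1,\ldots,\X^N)$ is weakly compact by Prokhorov's theorem, and that $\cpl_{\mathbf{J}}(\X^1,\ldots,\X^N)$ is a weakly closed subset of it: the $\mathbf{J}$-causality constraint can be expressed as a countable family of continuous linear equalities of the form $\int \varphi\cdot(a-\int a \mathrm{d}K^i_t) \mathrm{d}\pi=0$, tested against bounded continuous $\varphi$ and $a$ from suitable determining classes. Applying Fenchel--Rockafellar duality to the linear functional $\pi \longmapsto \int c(X^1,\ldots,X^N) \mathrm{d}\pi$ on this weakly compact convex set, with the marginal and causality constraints enforced by Lagrange multipliers, yields dual variables of exactly two kinds: first-coordinate potentials $f^i_1 \in \L^1(\Fc^i_1,\P^i)$, and, for each index $i$ and each $t\geq 2$, compensated functionals of the form $a^i_t(\o^i_{1:t},(\o^j_{1:t-1})_{j\in J^i})-\int a^i_t \mathrm{d}K^i_t$. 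Summing these across $i$ and $t$ reproduces exactly $\Sc^{\mathbf{J}}=\Sc_1\oplus\ldots\oplus\Sc_N$, and lower-semicontinuity plus the lower bound on $c$ supplies the integrability needed to carry out the duality in $\L^1$.

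The main obstacle is verifying that the Lagrange multipliers inherit the \emph{additive} structure required by $\Sc^{\mathbf{J}}$, rather than appearing as joint functions of all $N$ marginals simultaneously. Because $\mathbf{J}$ need not be symmetric and different sets $J^i$ may overlap, some care is needed to show that the causality constraints decouple across $i$: the constraint for a given $i$ only involves $\X^i$ and $\{\X^j:j\in J^i\}$, so its contribution to the dual lies in $\Sc_i$. Once this additivity is established, the remainder proceeds exactly as in the two-marginal arguments underlying \Cref{thm:caus_dual}.
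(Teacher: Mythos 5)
Your proposal is correct and follows essentially the same route as the paper, which for this statement simply cites the multicausal case in \cite{AcKrPa23a} and notes that the generalization is straightforward; that reference (like \cite[Proposition~5.2]{EcPa22}, underlying \Cref{thm:caus_dual}) proceeds exactly via the weak-duality computation plus weak compactness of the coupling set and a minimax/Fenchel--Rockafellar argument. The only point worth tightening is the closedness of $\cpl_{\mathbf{J}}(\X^1,\ldots,\X^N)$: the compensated test functions $a-\int a\,\mathrm{d}K^i_t$ are merely measurable since the kernels need not be continuous, so weak closedness requires the standard approximation of these integrands in $\L^1(\P^i)$ by continuous functions, which is legitimate precisely because the $i$-th marginal is fixed; the ``additivity of multipliers'' you flag as the main obstacle is in fact automatic, since the dual variables are indexed by the individual constraints, each of which involves only the coordinates $(\o^i_{1:t},(\o^j_{1:t-1})_{j\in J^i})$.
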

\begin{proof} 
    We refer to \cite{AcKrPa23a} for more details.
\end{proof}

\begin{theorem}[General case---multimarginal adapted transport] \label{existDualMC}
    Let $c : \prod_{i=1}^N \O^i \longrightarrow \R \cup \{ -\infty \}$ be measurable and $\ell^i \in \L^1(\P^i),$ $i \in \{1,\ldots,N\},$ be such that $c^i(\o^1,\ldots,\o^N)\leq\sum_{i} \ell^i(\o^i),$ $(\o^1,\ldots,\o^N) \in \prod_{i=1}^N \O^i$. 
    Then we have
    \begin{equation*} 
        \inf_{\pi \in \cplmc(\X^1,\ldots,\X^N)} \int_{\O^1 \times \ldots \times \O^N} c \mathrm{d}\pi 
        =\sup \bigg\{ \int_{\O^1 \times \ldots \times\O^N} s \mathrm{d}(\P^1\otimes \ldots\otimes \P^N) \,\bigg\vert\, s \in \Sc^{\rm mc},\; s \leq c \bigg\}.
    \end{equation*}
    Moreover, if either side is finite, then there exists $s^\star \in \Sc^{\rm mc}$ with $s^\star \leq c(X^1,\ldots,X^N)$ and
    \[
        \inf_{\pi \in \cplmc(\X^1,\ldots,\X^N)} \int_{\O^1 \times \ldots \times \O^N} c \mathrm{d}\pi =  \int_{\O^1 \times \ldots \times \O^N} s^\star  \mathrm{d}(\P^1\otimes \ldots\otimes \P^N).
    \]
\end{theorem}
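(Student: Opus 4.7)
I would follow the template of \Cref{existDualC} and \Cref{existDualBC}, replacing the single-step (causal) or two-sub-step (bicausal) induction with a double induction over the $NT$ pairs $(t,i)$. After reducing to $c\leq 0$ exactly as in \Cref{rem:bounded_c} (substituting $\tilde c\coloneqq c-\sum_{i=1}^N\ell^i$ and transferring dual optimizers back by the construction of \cite[Remark~5.1]{EcPa22} applied componentwise to each $\Sc_i$), the plan is as follows. Writing $\Mc^-$ for the non-positive measurable functions on $\prod_{i=1}^N\Xc^i$, $\Cc^-$ for the bounded continuous subset, and $V(c),D(c)$ for the primal and dual values, I will show that both $V$ and $D$ are continuous from below on $\Cc^-$ and from above on $\Mc^-$, and that they agree on $\Cc^-$. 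The functional Choquet capacitability theorem \cite[Proposition~2.1]{BaChKu19} will then propagate equality to all of $\Mc^-$.

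Continuity from below of $V$ on $\Cc^-$ is standard via weak compactness of $\cpl_{\mathbf{J}}(\X^1,\ldots,\X^N)$, which is weakly closed because $\mathbf{J}$-causality is characterized by integrals against bounded continuous elements of $\Sc^{\mathbf{J}}$; combined with the lower-semicontinuous duality just proved above, this also gives continuity from below of $D$ on $\Cc^-$. Continuity from above of $V$ on $\Mc^-$ is immediate by monotone convergence. The substance of the argument lies in establishing continuity from above of $D$ on $\Mc^-$ together with attainment when $D(c)>-\infty$.

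For this, I would fix $c^n\searrow c$ in $\Mc^-$ with $S\coloneqq\inf_n D(c^n)>-\infty$ and select $1/n$-optimizers
\[
s^n=\sum_{i=1}^N\Big(f^{i,n}_1(\omega^i_1)+\sum_{t=2}^T f^{i,n}_t\big(\omega^i_{1:t},(\omega^j_{1:t-1})_{j\in J^i}\big)\Big)\leq c^n(X^1,\ldots,X^N).
\]
Integrating against $\bigotimes_{i=1}^N K^i_2(\omega^i_1;\mathrm{d}\omega^i_{2:T})$ annihilates every $t\geq 2$ summand by the martingale-increment definition of $\Ac_{i,t}$, leaving $\sum_i f^{i,n}_1(\omega^i_1)\leq 0$; after normalizing constants so that $\int f^{i,n}_1\mathrm{d}\P^i$ is independent of $i$, each $(f^{i,n}_1)_n$ is $\L^1(\P^i)$-bounded and Koml\'os yields, up to Ces\`aro averaging along a subsequence, limits $f^i_1\in\L^1(\Fc^i_1,\P^i)$. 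I would then run a double induction, outer on $t=2,\ldots,T$ and inner on $i=1,\ldots,N$. At sub-step $(t,i)$, integrating the master inequality against a tensor product of kernels---specifically $K^i_{t+1}(\omega^i_{1:t};\mathrm{d}\omega^i_{t+1:T})$ for the $i$-th process, $K^j_t(\omega^j_{1:t-1};\mathrm{d}\omega^j_{t:T})$ for $j\in J^i$, and $K^k_2(\omega^k_1;\mathrm{d}\omega^k_{2:T})$ for $k\notin J^i\cup\{i\}$---kills every unprocessed summand $f^{j,n}_s$ with $(s,j)\neq(t,i)$ by the martingale property while preserving the variables on which $f^{i,n}_t$ depends. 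This produces a $\cpl_{\mathbf{J}}$-quasi-sure pointwise upper bound on $f^{i,n}_t$ by an envelope assembled from the already-extracted limits. \Cref{komlos} then delivers convex combinations $a^{i,n}_t$ of $(f^{i,n}_t)_n$, with weights measurable in the already-processed variables, converging $\cpl_{\mathbf{J}}$-quasi-surely (via \Cref{qs}) to some $a^i_t$; the same weights are simultaneously applied to every remaining sequence without disturbing the prior convergences. A Fatou argument, legitimized by the pointwise bound, forces
\[
\int_{\O^i_t}a^i_t\big(\omega^i_{1:t},(\omega^j_{1:t-1})_{j\in J^i}\big)K^i_t(\omega^i_{1:t-1};\mathrm{d}\omega^i_t)\geq 0,
\]
so that the $K^i_t$-compensated version of $a^i_t$ lies in $\Ac_{i,t}$. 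After completing all $NT$ sub-steps, passing to the limit in the master inequality and applying Fatou once more against $\bigotimes_{i=1}^N\P^i$ exhibits an admissible $s^\star\in\Sc^{\mathbf{J}}$ with $s^\star\leq c(X^1,\ldots,X^N)$ quasi-surely and $\int s^\star\mathrm{d}\bigotimes_{i=1}^N\P^i\geq S$, proving both dual attainment and $D(c^n)\searrow D(c)$.

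The hard part will be the bookkeeping of the double induction: at each of the $NT$ sub-steps one must pinpoint the correct tensor product of kernels that isolates a single $f^{i,n}_t$, verify that the resulting upper envelope is universally integrable against every $\pi\in\cpl_{\mathbf{J}}(\X^1,\ldots,\X^N)$, and ensure that successive Ces\`aro averaging with variable-dependent weights never destroys any of the previously established limits. Each check is structurally parallel to the bicausal case, but navigating a general $\mathbf{J}$-causality pattern, where the sets $J^i$ may differ across $i$, requires careful tracking of which subsets of variables remain `alive' when each $f^{i,n}_t$ is being isolated---this is a notational rather than conceptual obstacle.
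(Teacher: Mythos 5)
Your proposal follows essentially the same route as the paper's proof: reduction to $c\le 0$, continuity from below on $\Cc^-$ and from above of $V$ on $\Mc^-$, a forward-in-time induction over the pairs $(t,i)$ combining integration against tensor products of kernels, \Cref{komlos}, \Cref{qs}, and Fatou to extract an admissible limit $s^\star$, and finally the Choquet capacitability theorem. The only (immaterial) deviation is your choice of kernel for indices $k\notin J^i\cup\{i\}$ at sub-step $(t,i)$ --- you integrate against $K^k_2(\omega^k_1;\cdot)$ where the paper uses $K^k_t(\omega^k_{1:t-1};\cdot)$ --- both of which annihilate the relevant martingale increments, so the argument is correct as stated.
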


\begin{proof}Since the proof follows analogous steps as the proof of \Cref{existDualC,existDualBC}, we just provide an outline. We can without loss of generality assume that $c$ is non-positive. For a measurable function $c: \prod_{i=1}^N \Xc^i \longrightarrow \R \cup \{-\infty\}$ we denote
    \begin{align*}
        D(c) &\coloneqq  \sup \bigg\{ \int_{\O^1 \times \ldots \times\O^N} s \mathrm{d}(\P^1\otimes \ldots\otimes \P^N) \,\bigg\vert\, s \in \Sc^{\rm mc},\; s \leq c \bigg\}, \\
        V(c) &\coloneqq  \inf_{\pi \in \cplmc (\X^1,\ldots,\X^N)} \int_{\O^1 \times \ldots \times \O^N} c \mathrm{d}\pi.
    \end{align*}
   \emph{Step 1 and 2} (Continuity of $D$ and $V$ from below and of $V$ from above):  First, we show that $V$ and $D$ are continuous from below on $\mathcal C^-$ and $V$ continuous from above on $\mathcal M^-$, where, in this case, we write $\mathcal C^-$ for the set of continuous, bounded, and non-positive functions on $\prod_i \Xc^i$, and $\mathcal M^-$ for the set of measurable and non-positive functions on $\prod_i \Xc^i$. This can be done by analogous arguments as before.

\medskip
     \emph{Step 3} (Main part): Next, we show that $D$ is continuous from above on $\Mc^-$ and $D(c)$ is attained. To that end, let $(c^n)_{n \in \N}$ be a sequence in $\Mc^-$ such that $c^n \searrow c$ for some $c \in \Mc^-$ such that $\inf_{n \in \N} D(c^n)>-\infty$. Let us denote \[ S^n \coloneqq D(c^n),\;{\rm and}\; S \coloneqq \inf_{n \in \N} S^n. \]

For any $n \in \N,$ let $s^n \in \Sc^{\rm mc}$ be a $1/n$-maximizer for $D(c^n)$. This means that for every $(\o^1,\dots,\o^N) \in \prod_{i = 1}^N \O^i$
 \begin{gather} \label{bound1_multicausal}
    s^n(\o^1,\ldots,\o^N)=\sum_{i=1}^N \bigg(f^{i,n}_1(\o^i_1) + \sum_{t=2}^T f^{i,n}_t(\o^i_{1:t},(\o^j_{1:t-1})_{j \neq i}) \bigg) \leq c^n(\o^1,\ldots,\o^N), \\
    \int_{\O^1\times \ldots \times \O^N} \Big(\sum_{i=1}^N f^{i,n}_1 \Big) \mathrm{d}(\P^1\otimes \ldots,\otimes \P^N) \geq S^n-\frac{1}{n},\; \text{and}\;
    \lim_{n \rightarrow \infty} \int_{\O^1\times \ldots \times \O^N} \Big(\sum_{i=1}^N f^{i,n}_1\Big) \mathrm{d}(\P^1\otimes \ldots\otimes \P^N)= S. \nonumber
\end{gather}
Moreover, we can without loss of generality assume \[ \int_{\O^1_1} f_1^{1,n}(\o^1_1) \P^1(\mathrm{d}\o^1_1)=\int_{\O^2_1} f_1^{2,n}(\o^2_1) \P^2(\mathrm{d}\o^2_1)=\cdots=\int_{\O^N_1} f_1^{N,n}(\o^N_1) \P^N(\mathrm{d}\o^N _1),\; n \in \N.\]
In the first step, we integrate both sides of the inequality \eqref{bound1_multicausal} with respect to $K^1_2(\o^1_1;\mathrm{d}\o^1_{2:T})\otimes \ldots \otimes K^N_2(\o^N_1;\mathrm{d}\o^N_{2:T})$. This together with the bound $c^n \leq 0$ yields
\[ \sum_{i=1}^N f^{i,n}_1(\o^i_1) \leq 0. \] 
Analogously as before, we can show boundedness of the sequence $(f^{i,n})_{n \in \N}$ in $\L^1(\Fc^i_1,\P^i).$ We thus employ Koml\'{o}s' lemma to conclude that, up to taking Ces\`aro means of a subsequence, there exist functions $f^i \in \L^1(\Fc^i_1,\P^i)$ for every $i \in \{1,\ldots,N\}$ such that $f^{i,n}_1 \longrightarrow f^i$ on a $\P^i$--full set $A^{i,1}$ and we set $f^i=-\infty$ outside of $A^{i,1}.$ Moreover, we find
\[ \sum_{i=1}^N \int_{\O^i_1} f_1^{i}(\o^i_1) \P^i(\mathrm{d}\o^i_1)\geq S. \]
We thus only need to verify that $f_1^{i},$ $i \in \{1,\ldots,N\},$ are admissible. In order to do so, we repeat the following steps. We proceed inductively forward in time. For every $t \in \{2,\ldots,T-1\}$ we repeat these steps for every $i \in \{1,\ldots,N\}$:
\begin{enumerate}[label = (\roman*), leftmargin=*]
    \item[(a)] We integrate the inequality \eqref{bound1_multicausal} with respect to 
    \begin{equation}
        \label{eq:multi_kernel}
         K^i_{t+1}(\o^i_{1:t};\mathrm{d}\o^i_{t+1:T}) \otimes \bigotimes_{j \neq i} K^j_{t}(\o^j_{1:t-1};\mathrm{d}\o^j_{t:T})
    \end{equation}
    and show that the sequence $(f_t^{i,n})_{n \in \N}$ is bounded from above by a suitable constant.
    \item[(b)] \label{item:b_multi} We employ \Cref{komlos} to conclude that there exists an appropriate sequence of measurable convex combinations $a^{i,n}_t(\o^i_{1:t-1},\,\cdot\,,(\o^j_{1:t-1})_{j \neq i}) \in {\rm conv}(f^{i,n}_t(\o^i_{1:t-1},\,\cdot\,,(\o^j_{1:t-1})_{j \neq i}),f^{i+1,n}_t(\o^i_{1:t-1},\,\cdot\,,(\o^j_{1:t-1})_{j \neq i}),\ldots)$ with coefficients depending measurably on $(\o^i_{1:t-1},(\o^j_{1:t-1})_{j \neq i})$ such that the limit $\lim_{n} a^{i,n}$ exists on a $K^i_t(\o^i_{1:t-1};\,\cdot\,)$--full set $A^{i,t}_{\o^i_{1:t-1},(\o^j_{1:t-1})_{j \neq i}}.$ Finally, we define $a^i_t$ analogously as in \eqref{causal:a_2}.
    
    \item[(c)] We show that $a^i_t(\o^i_{1:t-1},\,\cdot\,,(\o^j_{1:t-1})_{j \neq i}) \in  \L^1\big(\Fc^i_t,K^i_t(\o^i_{1:t-1};\,\cdot\,)\big)$ and, by Fatou's lemma, \begin{equation} \label{bound_A} 0 \leq \int_{\O^i_t} a^i_t(\o^i_{1:t-1},\tilde{\o}^i_t,(\o^j_{1:t-1})_{j \neq i}) K^i_t(\o^i_{1:t-1};\mathrm{d}\tilde{\o}^i_t). \end{equation}
    \item[(d)] We take the very same convex combinations with weights from \ref{item:b_multi} of the remaining sequences of functions, including the sequence of cost functions, which might change their dependence, but doesn't change their convergence properties as well as inequalities that they satisfy.
\end{enumerate}
In the end, we find that 
\[ 
    \sum_{i=1}^N \bigg(f^{i}_1(\o^i_1) + \sum_{t=2}^T a^{i}_t(\o^i_{1:t},(\o^j_{1:t-1})_{j \neq i}) \bigg) \leq c(\o^1,\ldots,\o^N),
\] 
and since \eqref{bound_A} holds for every $t \in \{2,\ldots,T\}$ and $i \in\{1,\ldots,T\},$ we also have
\[ 
    \sum_{i=1}^N \bigg(f^i_1(\o^i_1) + \sum_{t=2}^T a^{i}_t(\o^i_{1:t},(\o^j_{1:t-1})_{j \neq i})- \int_{\O^i_t} a^i_t(\o^i_{1:t-1},\tilde{\o}^i_t,(\o^j_{1:t-1})_{j \neq i}) K^i_t(\o^i_{1:t-1};\mathrm{d}\tilde{\o}^i_t) \bigg) \leq c(\o^1,\ldots,\o^N).
\]
We have thus constructed admissible functions that attain $D(c)$ and $\lim_{n \to \infty} D(c^n) = S = D(c)$.

\medskip
     \emph{Step 4} (Choquet): Finally, we can employ the Choquet capaticability theorem and obtain
    \[
        D(c) = \inf \{ D(\hat c) \,\vert\, \hat c \ge c,\; \hat c \text{ is l.s.c.\ and lower-bounded} \}
        = \inf \{ V(\hat c) \,\vert\, \hat c \ge c,\; \hat c \text{ is l.s.c.\ and lower-bounded} \}
        = V(c),
    \]
    This concludes the proof.
\end{proof}

 We conclude this section with the proof of \Cref{thm:intro.robustsuperhedging}.
\begin{proof}[Proof of \Cref{thm:intro.robustsuperhedging}]
    The statement is a direct consequence of \Cref{thm:simple_multi}, resp.\ \Cref{existDualMC}. It suffices to verify that the problem $p(\xi)$ corresponds exactly to the dual problem of the multicausal optimal transport $\sup_{\P \in \cplmc(\mu^1,\ldots,\mu^N)} \E^{\P}[\xi].$ Indeed, it is easy to verify that if  $p_0 \in \R$ is so that there exists  $\mathbf{\Delta} \in \Ac$ such that \[p_0 + \sum_{t=1}^T \mathbf{\Delta}_{t-1} \cdot (\mathbf{X}_{t}-\mathbf{X}_{t-1}) \geq \xi(\mathbf{X}), \] then
    $s(\mathbf{X})\coloneqq p_0 + \sum_{t=1}^T \mathbf{\Delta}_{t-1} \cdot (\mathbf{X}_{t}-\mathbf{X}_{t-1})$
    is admissible for the dual, see {\eqref{eqn:dual_potent_multi}}. Conversely, let \begin{align} \label{eqn:martingale} 
        p_0 + \sum_{i=1}^N \sum_{t=1}^T a_t^i(x^i_{1:t},(x^j_{1:t-1})_{j \neq i})- \int a_s^i(x^i_{1:t-1},\tilde{x}^i_t,(x^j_{1:t-1})_{j \neq i}) K^{i}_t(x^i_{1:t-1};\mathrm{d}\tilde{x}^i_t) \geq \xi(x^1,\ldots,x^N)
    \end{align} be admissible for the dual. It can readily be seen that the process 
    \[ 
        M_t^{i}(\mathbf{x}_{1:t}) \coloneqq \sum_{s=1}^t a_s^i(x^i_{1:s},(x^j_{1:s-1})_{j \neq i})- \int a_t^i(x^i_{1:s-1},\tilde{x}^i_s,(x^j_{1:s-1})_{j \neq i}) K^{i}_s(x^i_{1:s-1};\mathrm{d}\tilde{x}^i_s),\; t \in \{1,\ldots,T\}, 
    \] 
    is an $(\F^{i},\mu^i)$-martingale with $M^i_0\coloneqq\E^{\mu^i}[M_1^i]=0$.
    Hence, using \eqref{eq:intro.completeness} we find $\Delta^{i}(\mathbf{x})$ such that for $ t \in \{0,\ldots,T\}$
    \begin{equation}
        \label{eq:intro.robustexistence.completeness}
        M_t^{i}(\mathbf{x}_{1:t}) =\sum_{s=1}^{t} \Delta^{i}_{s-1}(\mathbf{x}) (x_{s}^i-x_{s-1}^i).
    \end{equation}
    As $M^{i}$ is $\F^\mathbf{X}$-adapted, we conclude that $\Delta^{i}_s(\mathbf{x})$ is, in fact, a measurable function of $(\mathbf{x}_{1:s})=(x^1_{1:s},\ldots,x^N_{1:s})$ for every $s \in \{0,\ldots,T\},$ which can be verified inductively, forward in time. Indeed, we use that for a fixed $(x^j)_{j \neq i}$, the process $\Delta^{i}$ is adapted to $\F^{i}$ and is uniquely determined by \eqref{eq:intro.robustexistence.completeness}. 
    Therefore, we get for every $t \in \{0,\ldots,T\}$
    \[ 
        M_t^{i}(\mathbf{x}_{1:t}) =\sum_{s=1}^{t} \Delta^{i}_{s-1}(\mathbf{x}_{1:s-1}) (X_{s}^i-X_{s-1}^i). 
    \] 
    It suffices to define $\mathbf{\Delta}\coloneqq (\Delta^{1},\ldots,\Delta^{N})$ to obtain 
    \[ 
        p_0^\star + \sum_{t=1}^T \mathbf{\Delta}_{t-1} \cdot (\mathbf{X}_{t}-\mathbf{X}_{t-1}) \geq \xi(\mathbf{X}).
    \]
    We thus obtain admissible strategy for $p(\xi).$ This concludes the proof.
\end{proof} 

\subsection{Characterization of polar sets}

An interesting mathematical application of the general duality theory in optimal transport is the characterization of so-called polar sets, that is, sets which have zero probability under any coupling with given (fixed) marginals, see \cite[Proposition 2.1]{BeGoMaSc08}.
For certain constrained optimal transport problems such as martingale optimal transport, see  \cite[Proposition 3.1]{BeNuTo16}, having the correct understanding of polar sets is crucial for posing the correct formulation of the dual problem in order to have attainment.
The remainder of this section is concerned with the characterization of polar sets in the multicausal and causal setting, which is done in \Cref{thm:polar_multi,thm:polar_causal}. For the particular result in the bicausal setting we refer to \Cref{cor:polar_bicausal} below.

\medskip To ease notation, we introduce notation for gluing of sets.

\begin{definition} 
    Let $\Ac_1$ and $\Ac_2$ be Polish spaces. 
    Let further $A^1 \subseteq \Ac_1$ and let $A^2_{a_1} \subseteq \Ac_2,$ $a_1 \in A^1,$ be a family of sets. We define the gluing of $A^1$ and $A^2_\bullet$ by
    \[ 
        (A^1 \boxtimes A^2_\bullet) \coloneqq \big\{ (a_1,a_2) \in \Ac_1 \times \Ac_2\,\big\vert\, a_1 \in A^1 \text{ and } a_2 \in A^2_{a_1} \big\}. 
    \]
\end{definition}

\begin{remark} \begin{enumerate}[label=(\roman*)]
    \item Note that the set $A^1 \boxtimes A^2_\bullet$ might not be Borel even if $A^1$ and $A^2_{a_1},$ $a_1 \in A^1,$ are Borel.
    \item If $\Ac_3$ is a further Polish space and we are given a family of sets $A^3_{a_1,a_2} \subseteq \Ac_3,$ $(a_1,a_2) \in A^1 \boxtimes A^2_\bullet,$  we write $A^1 \boxtimes A^2_\bullet \boxtimes A^3_\bullet$ or $\boxtimes_{t = 1}^3 A^t_\bullet$ for the set 
    \[
        (A^1 \boxtimes A^2_\bullet) \boxtimes A^3_\bullet =\big\{ (a_1,a_2, a_3) \in \Ac_1 \times \Ac_2 \times \Ac_3 \,\big\vert\, a_1 \in A^1,\; a_2 \in A^2_{a_1} \text{ and } a_3 \in A^3_{a_1,a_2} \big\} 
    \] to simplify the notation.
    \item If $A^3_{a_1}$ does not depend on $a_2$ we still write $A^1 \boxtimes A^2_\bullet \boxtimes A^3_\bullet$ or $\boxtimes_{t = 1}^3 A^t_\bullet$ for the set
    \[
        \big \{(a_1,a_2,a_3) \in \Ac_1 \times \Ac_2 \times \Ac_3 \,\big\vert\, a_1 \in \Ac^1, a_2 \in A^2_{a_1} \text{ and } a_3 \in A^3_{a_1} \big\}.
    \]
    \item We use similar notation for cases involving more than three spaces or different set dependencies. This notation should not cause confusion, as there is little ambiguity.
\end{enumerate} 
\end{remark}

\begin{theorem}[Multicausal transport] \label{thm:polar_multi}
    Consider the setting of {\rm \Cref{sec:general_multi}} and let $E \subseteq \O^{1:N}$ be Borel. Then the following are equivalent:
    \begin{enumerate}[label = (\roman*)]
        \item \label{it:polar_multi.1} $\pi(E) = 0$ for all $\pi \in \cpl_{\rm mc}(\X^1,\ldots,\X^N);$
        \item \label{it:polar_multi.2} For every $i \in \{1,\ldots, N\}$ there are $\P^i$--full sets $A^{i,1}$ and for every $t \in \{2,\ldots,T\}$ and $\o^{1:N}_{1:t-1} \in \O^{1:N}_{1:t-1},$ there are $K^i_t(\o^i_{1:t-1};\,\cdot\,)$--full sets $A^{t,i}_{\o^i_{1:t-1},(\o^j_{1:t-1})_{j \neq i}} \subseteq \O^i_t,$  such that the set \begin{equation}
            \label{eqn:polar_multi}
            \Big[ \boxtimes_{t = 1}^T (A^{t,1}_\bullet \times \dots \times A^{t,N}_\bullet)  \Big]^C.
        \end{equation} is Borel and contains $E.$
    
    \end{enumerate}
\end{theorem}

\begin{proof}
    The implication `$\ref{it:polar_multi.2}\implies\ref{it:polar_multi.1}$' follows simply by successively disintegrating any $\pi \in \cplmc(\X^1,\ldots,\X^N)$, and observing that for every $t \in \{2,\ldots,T\}$ and $\o^{1:N}_{1:t-1} \in \O^{1:N}_{1:t-1}$
    \begin{align*}
        K_t^\pi \Big(\o^{1:N}_{1:t-1}; A^{t,1}_{\o^1_{1:t-1}, ( \o^j_{1 : t - 1} )_{j \neq 1} } \times \dots \times A^{t,1}_{\o^N_{1:t-1}, ( \o^j_{1 : t - 1} )_{j \neq N} } \Big)
        & \geq 1-\sum_{i = 1}^N K^\pi_t\Big( \o^{1:N}_{1:t-1}; \Big[ A^{t,i}_{\o^i_{t-1}, (\o^j_{1:t-1})_{j \neq i}} \Big]^C \Big) \\
        &= 1-\sum_{i = 1}^N K_t^i\Big(\o^{i}_{1:t-1};\Big[  A^{t,i}_{\o^i_{t-1}, (\o^j_{1:t-1})_{j \neq i}} \Big]^C \Big)
        \\
        &= 1,
    \end{align*}
    where $K^\pi_t$ denotes the disintegration of $\pi$ given $\o^{1:N}_{1:t-1}$ and we used multicausality for the first equality. 
    In the case when $t=1,$ we have by analogous arguments that $\pi(A^{1,1}\times \cdots\times A^{N,1})\geq 1-\sum_{i=1}^N \P^i([A^{N,1}]^C)=1$ Consequently, we have $\pi(E^C) = 1$ by Fubini's theorem, which concludes the first part.

    \medskip To show the implication `$\ref{it:polar_multi.1}\implies\ref{it:polar_multi.2}$', we consider the corresponding multicausal transport problem with cost $c = -\mathbf 1_{E}$, which clearly has value $0$.
    Employing \Cref{existDualMC} provides us with admissible dual potentials $f_t^i \in \Ac^i_t$ for $t \in \{1,\ldots,T\}$ and $i \in \{1,\ldots, N\}$ that satisfy
    \begin{gather}
        \sum_{i=1}^N \Big(f^{i}_1(\o^i_1) + \sum_{t=2}^T f^{i}_t(\o^i_{1:t},(\o^j_{1:t-1})_{j \neq i}) \Big) 
        \le -\mathbf 1_{N}(\o^{1:N}) \le 0, \label{eqn:polar_multi_dual}\\
        \sum_{i=1}^N\int f_1^i \mathrm{d}\P^i=\int \Big[ \sum_{i=1}^N\sum_{t=2}^T f_t^i \Big] \mathrm{d}\Big( \bigotimes_{i=1}^N\P^i \Big) 
        =0. \label{eqn:polar_multi_dual2}
    \end{gather}
    Let us inductively set, for $t \in \{2,\ldots,T\}$,
    \begin{align*}
     A^{1,i}&\coloneqq \{ \o^i_1 \in \O^i_1 \,\vert\, f_1^i(\o^i_1)=0 \}, \\
     A^{t,i}_{\o^{1:N}_{1:t-1}}&\coloneqq \begin{cases}\big\{ \tilde \o^i_{t} \in \O^i_{t} \,\big\vert\, f_t^i(\o^i_{1:t-1},\tilde \o^i_t,(\o^j_{1:t-1})_{j \neq i})=0 \big\} &\o^{1:N}_{1:t-1} \in \boxtimes_{s = 1}^{t-1} \big(A^{s,1}_\bullet \times \dots \times A^{s,N}_\bullet \big) \\
     \O^i_t &\text{otherwise.}
     \end{cases}
    \end{align*}

\medskip We note that whenever $c<0,$ the value of at least one of the dual potentials $f_i^t,$ $i \in \{1,\ldots,N\},$ $ t \in \{1,\ldots,T\}$ must be negative as well. It follows that
\begin{align*} E&= \{ \o^{1:N} \in \O^{1:N}  \,\vert\, c(\o^{1:N})<0 \} \\
&\subseteq \Big[ \bigcap_{i=1}^N \bigcap_{t=1}^T \{ f_i^t=0 \} \Big]^C \\
&= \Big[ \bigcap_{i=1}^N  \{f_1^i=0 \}\cap \bigcap_{i=1}^N \bigcap_{t=2}^T \big\{f_t^i=0 \text{ and } f_s^j=0 \text{ for every } s \in \{1,\ldots,t-1\} \text{ and } j \in  \{1,\ldots,N\} \big\} \Big]^C\\
&=\Big[ \boxtimes_{t = 1}^T (A^{t,1}_\bullet \times \dots \times A^{t,N}_\bullet) \Big]^C.
\end{align*}
It remains to verify that $A^{t,i}_\bullet$ has the desired properties. For $t \in \{2,\ldots,T\},$ we integrate left-hand and right-hand side of \eqref{eqn:polar_multi_dual} with respect to the kernel $\bigotimes_{i=1}^N K^i_{2}(\o^i_{1};\mathrm{d}\o^i_{2:T})$. We find that $\sum_{i = 1}^N f_1^i \le 0$ and $\sum_{i = 1}^N \int f_1^i \mathrm{d}\P^i = 0$.
    By translating the potentials, as done in the proof of \Cref{existDualMC}, we may also assume without loss of generality that $\int f_1^i \mathrm{d} \P^i = 0$, for every $i \in \{1,\ldots,N\}.$ Thus, integrating \eqref{eqn:polar_multi_dual} with respect to $K^i_{2}(\o^i_{1};\mathrm{d}\o^i_{2:T}) \otimes \bigotimes_{j \neq i} \P^j(\mathrm{d}\o^j_{t:T}),$ we find $f_1^i \le -\sum_{j \neq i} \int f_1^j \mathrm{d}\P^j = 0$, from where we conclude that the set
    \[
        A^{i,1} = \{ \o^i_1 \in \O^i_1 \,\vert\, f_1^i(\o^i_1) = 0 \},
    \]
    is $\P^i$--full for every $i \in \{1,\ldots,N\}$. Next, let $t \in \{2,\ldots,T\}.$ It is clear that whenever $\o^{1:N}_{1:t-1} \notin \boxtimes_{s = 1}^{t-1} \big(A^{s,1}_\bullet \times \dots \times A^{s,N}_\bullet \big),$ the set $A^{t,i}_{\o^{1:N}_{1:t-1}}$ is $K^i_t(\o^i_{1:t-1};\,\cdot\,)$--full. Assume thus the contrary. In other words, assume that for this $\o^{1:N}_{1:t-1}$ it holds
    \[ \forall s \in \{1,\ldots,t-1\},\,\forall i \in \{1,\ldots,N\},\;\forall k \in \{1,\ldots,N\}\;\forall u \in \{1,\ldots, s \},\; f_u^k(\o^k_{1:u},(\o^j_{1:u-1})_{j \neq k}) = 0.\]
    Again, integrating \eqref{eqn:polar_multi_dual} with respect to the kernel $ K^i_{t+1}(\o^i_{1:t};\mathrm{d}\o^i_{t+1:T}) \otimes \bigotimes_{j \neq i} K^j_{t}(\o^j_{1:t-1};\mathrm{d}\o^j_{t:T})$ gives

    \[
        f_t^i(\o^i_{1:t},(\o^j_{1:t-1})_{j \neq i}) \le 0.
    \]
    
    We have that $f_t^i \in \Ac_t^i,$ and so \[\int_{\O^i_t}f_t^i(\o^i_{1:t},(\o^j_{1:t-1})_{j \neq i}) K^i_t(\o^i_{1:t-1};\mathrm{d}\o^i_t)=0. \] We deduce that in this case $\{ \o^i_t \in \O^i_t \,\vert\, f_t^i(\o^i_{1:t},(\o^j_{1:t-1})_{j \neq i}) = 0 \}$ is $K^i_t(\o^i_{1:t-1};\,\cdot\,)$--full.

\medskip Hence, the sets $A^{i,1}_\bullet$ have the desired properties and the proof is concluded.
\end{proof}

    We have seen already in {\rm\Cref{rem:J_causal}} that the multicausal setting includes bicausal optimal transport.
    As a consequence of {\rm\Cref{thm:polar_multi}} we obtain the following characterization of polar sets in these settings, which we formulate here explicitly.

\begin{corollary}[Bicausal transport] \label{cor:polar_bicausal}
     Let $E \subseteq \O^\X \times \O^\Y$ be Borel. The following are equivalent:
        \begin{enumerate}[label = (\roman*)]
            \item $\pi(E) = 0$ for all $\pi \in \cplbc(\X,\Y)$;
            \item  There are a $\P^\X$--full set $A^1$ and a $\P^\Y$--full set $B^1$ and for every $t \in \{2,\ldots,T\}$ and $(\o^{\X}_{1:t-1},\o^{\Y}_{1:t-1}) \in \O^{\X}_{1:t-1} \times \O^{\Y}_{1:t-1},$ there are a $K^\X_t(\o^\X_{1:t-1};\,\cdot\,)$--full set $A^{t}_{\o^\X_{1:t-1},\o^\Y_{1:t-1}} \subseteq \O^\X_t$ and a $K^\Y_t(\o^\Y_{1:t-1};\,\cdot\,)$--full set $B^{t}_{\o^\X_{1:t-1},\o^\Y_{1:t-1}} \subseteq \O^\Y_t$ such that \begin{equation*}
            \Big[ \boxtimes_{t = 1}^T (A^{t}_\bullet \times B^{t}_\bullet)  \Big]^C.
        \end{equation*} is Borel and contains $E.$
        \end{enumerate}
\end{corollary}

\begin{theorem}[Causal transport] \label{thm:polar_causal} 
        Consider the setting of {\rm\Cref{sec:general_adapt_causal}} and let $E \subseteq \O^\X \times \O^\Y$ be Borel. The following are equivalent:
        \begin{enumerate}[label = (\roman*)]
            \item $\pi(E) = 0$ for all $\pi \in \cplc(\X,\Y)$;
            \item There are a $\P^\X$--full set $A^1$ and a $\P^\Y$--full set $B^1$ and for every $t \in \{2,\ldots,T\}$ and $(\o^{\X}_{1:t-1},\o^{\Y}_{1:t-1}) \in \O^{\X}_{1:t-1} \times \O^{\Y}_{1:t-1},$ there are a $K^\X_t(\o^\X_{1:t-1};\,\cdot\,)$--full set $A^{t}_{\o^\X_{1:t-1},\o^\Y_{1:t-1}} \subseteq \O^\X_t$ and a $K^\Y_t(\o^\Y_{1:t-1};\,\cdot\,)$--full set $B^{t}_{\o^\Y_{1:t-1}} \subseteq \O^\Y_t$ such that \begin{equation*}
            \Big[ \boxtimes_{t = 1}^T (A^{t}_\bullet \times B^{t}_\bullet)  \Big]^C.
        \end{equation*} is Borel and contains $E.$
        \end{enumerate}
\end{theorem}

\begin{proof}
    The proof of \Cref{thm:polar_multi} carries over with the obvious modifications.
    Note that in this case, $B^{t}_{\o^\Y_{1:t-1}}$ does not depend on $\o^\X_{1:t-1},$ and so $B\coloneqq \boxtimes_{t=1}^T B^{t}_{\o^\Y_{1:t-1}} \subseteq \O^\Y$ is a $\P^\Y$--full Borel set.
\end{proof}

\subsection{Causal barycenters} \label{sec:general_bary}

In this section, we consider the dual problem of barycenters. We refer interested readers to the work \citeauthor*{AcKrPa23a} \cite{AcKrPa23a} for further discussion, where an application
in a dynamic matching problem is also presented. 
To that end, we consider
\[ \inf_{\Y \in {\rm FP}(\Yc)} \sum_{i=1}^N \Cc\Wc_{c^i}(\X^i,\Y),\] 
where $c^i : \O^i \times \Yc \longrightarrow \R$ is a measurable function and $\Yc = \prod_{t=1}^T \Yc_t$ is a given complete separable metric space, thus Polish space, with the metric $d_{\Yc}$. Recall that for the sake of brevity, we sometimes write $c^{1:N} = (c^1,\ldots,c^N)$. Here, ${\rm FP}(\Yc)$ denotes the factor space of all filtered processes
\[ \Y=(\Omega,\Fc,\F,\P,Y), \] where $Y$ is an $\F$-adapted process with $Y_t \in \Yc_t,$ $t \in \Tc,$ with respect to the equivalence \[ \Y \sim \Y^\prime \iff \Ac\Wc_{d_{\Yc}\wedge 1}(\Y,\Y^\prime)=0.\] See \citeauthor*{BaBePa21} \cite{BaBePa21} for more details. Let us now set $\Y^\nu \coloneqq (\Yc,\Fc_T^Y,\F^Y,\nu,Y),$ where $\F^Y$ is the canonical $\sigma$-algebra on the path space $\Yc=\prod_{t=1}^T\Yc_t,$ $Y$ is the canonical process on $\Yc$ and $\nu \in \Pc(\Yc).$ Thanks to \cite[Remark 4.2]{AcKrPa23a}, we have
\begin{equation}
    \label{eq:bary.canonicalvsfp}
    \inf_{\Y \in {\rm FP}(\Yc)} \sum_{i=1}^N \Cc\Wc_{c^i}(\X^i,\Y)=\inf_{\nu\in \Pc(\Yc)} \sum_{i=1}^N \Cc\Wc_{c^i}(\X^i,\Y^\nu).
\end{equation}
Let us further denote
\[ \cplc(\X^i,\ast)\coloneqq \bigcup_{\nu \in \Pc(\Yc)}\cplc(\X^i,\Y^\nu).  \]

\medskip
Let us define for $i \in \{1,\ldots, N\}$ and $t \in \{2,\ldots,T\}$ the following sets of functions, which allow us to test causality and identify the marginals:
 \begin{align*}   
     \Ac_{i,t}^{\Yc}&\coloneqq \bigg\{ f^i_t(\o^i,y)=a^i_t(\o^i_{1:t},y_{1:t-1})-\int_{\O^i_t} a^i_t(\o^i_{1:t-1},\tilde{\o}^i_t,y_{1:t-1}) K^i_t(\o^i_{1:t-1};\mathrm{d}\tilde{\o}^i_t) \,\bigg\vert\, \\
    &\hspace{2cm}a^i_t \text{ is Borel measurable and } a^i_t(\o^i_{1:t-1},\,\cdot\,,y_{1:t-1}) \in  \L^1\big(\Bc(\O^i_t),K^i_t\big(\o^i_{1:t-1};\,\cdot\,)\big) \bigg\}, \\
    \Ac^{\Yc}_{i,1}&\coloneqq \Big\{ f^i_1(\o^i,y)=a_1^i(\o^i_1) \,\Big\vert\, a^i_1 \in \L^1(\Fc^i_1,\P^i) \Big\}.
\end{align*}
Furthermore, to simplify notation, we set $\Gc_{\Yc} \coloneqq \{ s(y)=g(y) \,\vert\, g : \Yc \longrightarrow \R \text{ is measurable } \}$ and we define the set of dual potentials by
\begin{multline}
    \label{eq:def.Phi0}
    \Phi^0(c^{1:N}) \coloneqq\bigg\{ f^{1:N}_1 \in \prod_{i=1}^N \L^1(\Fc^i_1,\P^i) \,\bigg\vert\, \forall \nu \in \Pc(\Yc),\,\exists f_{2:T}^i \in \Ac_{2:T}^{\Yc},\,\exists g^i \in \Gc_{\Yc},\; i \in \{1,\ldots,N\}: \\
    f_1^i(\o^i_1) + \sum_{t=2}^T f_t^i(\o^i_{1:t},y_{1:t-1}) + g^i(y) \leq c^i(\omega^i,y)\text{ and } \sum_{i=1}^N g^i(y)=0\;\cplc(\X^i,\Y^\nu)\text{--q.s.} \bigg\}.
\end{multline}

We note that in the causal barycenter problem, we allow the dual potentials to partly depend on the marginal $\nu.$ This relaxation is necessary in our proofs to construct limits and to have attainment, \emph{c.f.}\ proof of \Cref{lem:step3.attain_bary}. 
This will later be relaxed in \Cref{thm:attain_bary.CH}, but at the cost of assuming the continuum hypothesis and universal measurability of the dual variables.

\begin{remark} \label{rem:qs_bary} 
    We point out that if, for $i \in \{1,\dots,N\}$ and some $\nu \in \Pc(\Yc)$,
    \begin{align} \label{eqn:rem_qs}
        f_1^i(\o^i_1) + \sum_{t=2}^T f_t^i(\o^i_{1:t},y_{1:t-1}) + g^i(y) \leq c^i(\omega^i,y)\text{ and } \sum_{i=1}^N g^i(y)=0\quad\cplc(\X^i,\Y^\nu)\text{{\rm--q.s.}},
    \end{align} 
    then \eqref{eqn:rem_qs} holds everywhere on a $\cplc(\X^i,\Y^\nu)$--full Borel set $A^i$ of the form
    \[
        A^i = \boxtimes_{t = 1}^T (A^{i,t}_\bullet \times B^t_\bullet),
    \]
    where $B = \boxtimes_{t = 1}^T B_\bullet^t \subseteq \Yc$.
    Indeed, {\rm\Cref{thm:polar_causal}} gives us that, for every $i \in \{1,\ldots,N\}$, \eqref{eqn:rem_qs} holds everywhere on a $\cplc(\X^i,\Y^\nu)$--full Borel set $\tilde A^i$ of the form
    \[
        \tilde A^i = \boxtimes_{t = 1}^T (A^{i,t}_\bullet \times B^{i,t}_\bullet),
    \]
    where $B^i = \boxtimes_{t = 1}^T B_\bullet^{i,t} \subseteq \Yc$. 
    Then it suffices to set $B^t_\bullet\coloneqq \bigcap_{i=1}^N B^{i,t}_\bullet$ to obtain the conclusion above.

    \medskip Moreover, observe that it is clear from the proof of {\rm \Cref{thm:polar_causal}} that, for every $t \in \{1,\ldots,T\}$, the projection of $A^i$ onto $\O^i_{1:t} \times \Yc$ is Borel.
\end{remark}

\begin{remark} \label{rem:baryc_L1}
Let $\nu \in \Pc(\Yc)$ and assume $c^i(\o^i,y)\leq \ell^i(\o^i)+k(y)$ for some $k \in \L^1(\nu)$ and $\ell^i \in \L^1(\P^i)$, $i \in \{1,\dots,N\}$.
Let further $\pi \in \cplc(\X^i,\Y^\nu)$ and $f_1^{1:N} \in \Phi^0(c^{1:n}).$ That is, there are $f^{1:N}_{2:T}$ and $g^{1:N}$ satisfying \eqref{eqn:rem_qs}. Summing over all $i \in \{1,\ldots, N\}$ gives 
\begin{equation} \label{rem_bary_integr} \sum_{i=1}^N \Big(f_1^i+\sum_{t=2}^T f_t^i\Big) \leq \sum_{i=1}^N c^i\leq \sum_{i=1}^N \ell^i + Nk.\end{equation}
Let now $i \in \{1,\ldots,N\}$ be fixed. 
Integrating \eqref{rem_bary_integr} with respect to $K^i_3(\o^i_{1:2};\mathrm{d}\o^i_{3:t}) \otimes \bigotimes_{j \neq i} \P^j(\mathrm{d}\o^j)$ gives
\[ 
    f_2^i(\o^i_{1:2},y_1) \leq -f_1^i(\o^i_1) + Nk(y) + \int \ell^i(\o^i)K^i_3(\o^i_{1:2};\mathrm{d}\o^i_{3:t}) + \sum_{j \neq i} \int \big(\ell^j(\o^j) - f_1^j(\o^j_1)\big) \P^j(\mathrm{d}\o^j).
\]

It follows that, since the right-hand side is integrable with respect to $\pi^i,$ we can conclude similarly as in the proof of {\rm\Cref{thm:caus_dual}}, that $f_2^i \in \L^1(\pi^i)$ for every $i \in \{1,\ldots,N\}$ and, in particular, $\int f_2^i \mathrm{d} \pi^i=0.$ Also similarly as in the proof of {\rm\Cref{thm:caus_dual}}, we can inductively forward in time verify that $f_t^i \in \L^1(\pi^i)$ and $\int f^i \mathrm{d}\pi^i=0$ for every $i \in \{1,\ldots,N\}$ and $t \in \{1,\ldots,T\}.$

\medskip We further have that $g^{i}(y)  \leq c^i-\sum_{t=1}^T f_t^i\leq \ell^i+k-\sum_{t=1}^T f_t^i$ and, consequently, 
\[g^{i}(y)=-\sum_{j \neq i} g^j(y) \geq -\sum_{j \neq i} \Big[\ell^i(\o^i)+k(y)- f_1^i(\o^i_1)-\sum_{t=2}^T f_t^i(\o^i_{1:t},y_{1:t-1}) \Big]. \]
These two inequalities give that $g^i \in \L^1(\pi^i),$ and so $g^i \in \L^1(\nu),$ $i \in\{1,\ldots,N\}.$ 
\end{remark}

\begin{proposition}\label{prop:weak_duality}
    For every $i \in \{1,\dots,N\}$, let $c^i : \O^i \times \Yc \longrightarrow \R$ be measurable and bounded from below.
    Then we have
    \[
        \inf_{\nu \in \Pc(\Yc)} \sum_{i = 1}^N \Cc\Wc_{c^i}(\X^i,\Y^\nu) 
        \ge \sup_{f^{1:N}_1 \in \Phi^0(c^{1:N})} \sum_{i = 1}^N \int_{\O^i_1} f_1^i \mathrm d\P^i.
    \]
\end{proposition}

\begin{proof}
    Pick any $f_1^{1:N} \in \Phi^0(c^{1:N})$ and $\nu \in \Pc(\Yc)$ with $\sum_{i = 1}^N \Cc\Wc_{c^i}(\X^i,\Y^\nu) < \infty$. If such $\nu$ doesn't exist, the statement holds trivially.
    By definition, there are $(f_{2:T}^{1:N},g^{1:N}) \in \Ac_{1:N,2:T}^{\Yc} \times \Gc_\Yc^N$ such that
    \begin{equation}
        \label{eq:weak_duality.1}
        \sum_{t = 1}^T f_t^i + g^i \le c^i \text{ and } \sum_{i = 1}^N g^i = 0 \quad \cplc(\X^i,\Y^\nu)\text{--q.s.}
    \end{equation}
    For $i \in \{1,\dots,N\}$, let $\pi^i \in \cplc(\X^i,\Y^\nu)$ with $c^i \in \L^1(\pi^i)$. Summing both sides of \eqref{eq:weak_duality.1} over $i \in \{1,\ldots,N\}$ yields 
    \begin{equation} \label{eq:bary_integr} \sum_{i=1}^N \Big(f_1^i+\sum_{t=2}^T f_t^i\Big) \leq \sum_{i=1}^N c^i.\end{equation}
     Similarly as in {\rm\cite[Definition 22]{Pa22}}, there exists a $\gamma \in \cpl(\X^1,\ldots,\X^N,\Y^\nu)$ such that ${\rm proj}^{i,N+1}_{\#}\gamma =\pi^i.$ It is easy to see that the process $M_t\coloneqq \sum_{s=1}^t \sum_{i=1}^N f_s^i$ is a local martingale under $\gamma$ with $M_0\in \L^1(\gamma)$ and $(M_T)^+ \in \L^1(\gamma)$ by  \eqref{eq:bary_integr}. It follows that $M$ is a true martingale under $\gamma$ and, by integrating \eqref{eq:bary_integr} with respect to $\gamma,$ we obtain
     \[ \sum_{i = 1}^N \int f_1^i \mathrm d\pi^i\leq \sum_{i = 1}^N \int c^i \mathrm d\pi^i.  \]
    From this inequality, the assertion readily follows.
\end{proof}

We have the following duality result.

\begin{proposition} \label{prop:lsc_bary} 
    Assume that $c^i,$ $i \in \{1,\ldots,N\},$ are lower-semicontinuous and lower-bounded.
    Then we have
    \[
        \inf_{\Y \in {\rm FP}(\Yc)} \sum_{i=1}^N \Cc\Wc_{c^i}(\X^i,\Y)=\sup_{f_1^{1:N} \in \Phi^0(c^{1:N})} \sum_{i=1}^N \int_{\O^i_1} f^i_1(\o^i_1) \P^i(\mathrm{d}\o^i_1). 
    \]
\end{proposition}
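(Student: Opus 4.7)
The approach is to combine the causal optimal transport duality (Theorem~\ref{thm:caus_dual}) with a minimax argument that swaps the outer infimum over $\nu \in \Pc(\Yc)$ with the supremum appearing in each $\Cc\Wc_{c^i}(\X^i,\Y^\nu)$. The constraint $\sum_i g^i = 0$ then arises as a Lagrangian/marginal condition enforcing well-posedness of the inner infimum in $\nu$.

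\emph{Easy direction $(\geq)$.} Fix $(f_1^i)_{i=1}^N \in \Phi(c^{1:N})$ together with associated $(g^i,h^i)_{i=1}^N$. For every $\nu \in \Pc(\Yc)$ and every $\pi^i \in \cplc(\X^i,\Y^\nu)$, integrating the quasi-sure inequality $f_1^i(\o^i_1) + g^i(y) \leq c^i(X^i(\o^i),y) + h^i(\o^i,y)$ against $\pi^i$, and noting that each $h^i \in \Hc_i$ has vanishing expectation under any coupling in $\cplc(\X^i,\ast)$ by construction of $\tilde{\Ac}_{i,t}$, gives $\int f_1^i\,d\P^i + \int g^i\,d\nu \leq \int c^i\,d\pi^i$. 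Summing over $i$ and using $\sum_i g^i \equiv 0$ yields $\sum_i \int f_1^i\,d\P^i \leq \sum_i \int c^i \, d\pi^i$, and minimizing in $\pi^i$ and $\nu$ gives the lower bound.

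\emph{Hard direction $(\leq)$ via minimax.} Using Theorem~\ref{thm:caus_dual}, rewrite
\[
    \Cc\Wc_{c^i}(\X^i,\Y^\nu) = \sup \Big\{ \textstyle\int f_1^i\,d\P^i + \int g^i\,d\nu : f_1^i + g^i - h^i \leq c^i\ \cplc(\X^i,\Y^\nu)\text{--q.s.}\Big\}.
\]
Define $L(\nu,(f_1^i,g^i,h^i)_i) \coloneqq \sum_i \int f_1^i\,d\P^i + \sum_i \int g^i\,d\nu$ and $F(\nu) \coloneqq \sum_i \Cc\Wc_{c^i}(\X^i,\Y^\nu)$, so that $F(\nu) = \sup L(\nu,\cdot)$. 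Assumption~\ref{ass:baryc}(ii)--(iii) renders any minimizing sequence $(\nu_n)$ tight, since $\int d_{\Yc}(y,y^0)\,d\nu_n(y) \leq C(1 + \Cc\Wc_{c^1}(\X^1,\Y^{\nu_n}) + \int c^1(x^1,y^0)\,d\P^1)$ is bounded and sublevel sets of $d_\Yc(\cdot,y^0)$ are compact. Thus the infimum of $F$ can be restricted to a weakly compact convex subset $\Kc \subset \Pc(\Yc)$. As $L$ is linear in $\nu$ and linear (hence concave) in the dual variables, Sion's minimax theorem applies on $\Kc$ and swaps $\inf_{\nu \in \Kc}$ with $\sup$. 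The resulting inner infimum
\[
\inf_{\nu \in \Kc} \int \big(\textstyle\sum_i g^i\big)\,d\nu
\]
equals $-\infty$ unless $\sum_i g^i$ is bounded below; by transferring constants between each $g^i$ and the corresponding $f_1^i$ (an operation that preserves admissibility in $\tilde{\Ac}_{i,1}$ and leaves $L$ unchanged), one can force $\sum_i g^i \equiv 0$, which recovers exactly the set $\Phi(c^{1:N})$.

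\emph{Main obstacle.} The decisive difficulty is the rigorous application of Sion's theorem: the dual variables $g^i$ are only measurable, so $\nu \mapsto \int g^i\,d\nu$ is not generally weakly continuous, and the $h^i$'s are specified only $\cplc(\X^i,\ast)$--quasi-surely, which interacts awkwardly with varying $\nu$. This is where Assumption~\ref{ass:baryc}(i) enters crucially: continuity of the kernels $K^i_t$ ensures that one can regularize the $h^i$'s and work with lower-semicontinuous cost functionals to approximate, so that the lower-semicontinuity of $F$ on $\Kc$ (which follows from that of each $\Cc\Wc_{c^i}$, a classical fact for causal transport with lower-semicontinuous cost) can be paired with the compactness of $\Kc$ to legitimize the swap. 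Everything else is bookkeeping: ensuring the constant-shift reduction $\sum_i g^i = 0$ lives in the admissible classes $\tilde{\Ac}_{i,1}$ and $\Gc_\Yc$, and identifying the resulting supremum with that defining $\Phi(c^{1:N})$.
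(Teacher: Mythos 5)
First, note that the paper does not prove this proposition in-house: its ``proof'' is a one-line citation to \cite[Theorem 4.7]{AcKrPa23a}. Your minimax strategy is, in spirit, the standard route to such barycenter dualities (and the route the cited reference takes), so the overall plan is reasonable; but as written the sketch has concrete gaps precisely at the steps that make the statement nontrivial. In the easy direction, you integrate $f_1^i+g^i\le c^i+h^i$ against $\pi^i$ term by term; this requires $g^i\in\L^1(\nu)$, which is not part of the definition of $\Gc_\Yc$ (only measurability is assumed, and only the \emph{sum} $\sum_i g^i$ is controlled). The correct fix is to glue the couplings $\pi^1,\ldots,\pi^N$ along their common $\Yc$-marginal $\nu$ and sum the $N$ pointwise inequalities first, so that $\sum_i g^i=0$ cancels before any integration.

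In the hard direction there are three issues. (a) Sion's theorem cannot be applied to $\inf_\nu\sup L$ as you have set it up, because the feasible set of the inner supremum depends on $\nu$: the inequality $f_1^i+g^i-h^i\le c^i$ is only required $\cplc(\X^i,\Y^\nu)$--q.s., and the integrability class defining $\Hc_i$ also varies with $\nu$. One must first pass to a $\nu$-independent formulation (e.g.\ pointwise inequalities for a class of continuous potentials, which is where \Cref{ass:baryc}$(i)$ and the lower semicontinuity of $c^i$ genuinely enter), prove the minimax there, and only afterwards recover the $\cplc(\X^i,\ast)$--q.s.\ formulation of $\Phi(c^{1:N})$; you acknowledge this as ``the decisive difficulty'' but do not carry it out, and it is the heart of the proof. (b) Relatedly, $\nu\mapsto\int g^i\,\mathrm{d}\nu$ is not weakly lower semicontinuous for merely measurable $g^i$, so the semicontinuity hypothesis of Sion fails unless the supremum is first restricted to regular potentials without loss of value. (c) The reduction to $\sum_i g^i\equiv 0$ ``by transferring constants between $g^i$ and $f_1^i$'' is wrong as stated: constant shifts change $\sum_i g^i$ only by a constant and cannot make a non-constant sum vanish identically. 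The correct normalization is to replace $g^i$ by $g^i-\frac1N\sum_j g^j$ (a non-constant correction) and to absorb $\frac1N\inf_y\sum_j g^j(y)$ into $f_1^i$; one then checks that the individual inequalities defining $\Phi(c^{1:N})$ are preserved. With (a)--(c) repaired the argument goes through, but in its present form the proposal establishes only the easy inequality modulo the integrability caveat.
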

\begin{proof}
   The duality was proved in \cite[Theorem 4.7]{AcKrPa23a} with a smaller class of dual potentials. Thus, the statement follows from the weak duality in \Cref{prop:weak_duality}.
\end{proof}

\begin{theorem}[Relaxed duality for causal barycenters] \label{thm:attain_bary}
    Let $\Yc$ be a $\sigma$-compact space.
    For every $i \in \{1,\ldots,N\},$ let $c^i : \Xc^i \times \Yc \longrightarrow \R$ be measurable and lower-bounded, $k : \Yc \longrightarrow \R$ be bounded on compacts, and $\ell^i \in \L^1(\P^i)$ be such that $c^i(\o^i,y) \leq  \ell^i(\o^i)+k(y),$ $(\o^i,y) \in \O^i \times \Yc$. 
    Then, we have
    \[ 
        \inf_{\Y \in {\rm FP}(\Yc)} \sum_{i=1}^N \Cc\Wc_{c^i}(\X^i,\Y)=\sup_{f_1^{1:N}\in \Phi^0(c^{1:N})} \sum_{i=1}^N \int_{\O^i_1} f^i_1(\o^i_1) \P^i(\mathrm{d}\o^i_1).
    \]
    Moreover, there is $f_1^{1:N}\in \Phi^0(c^{1:N})$ such that 
    \[
        \inf_{\Y \in {\rm FP}(\Yc)} \sum_{i=1}^N \Cc\Wc_{c^i}(\X^i,\Y)=\sum_{i=1}^N \int_{\O^i_1} f^i_1(\o^i_1) \P^i(\mathrm{d}\o^i_1).
    \]
\end{theorem}

\begin{proof} Let us without loss of generality assume that $c^i \geq 0.$
    We write
    \begin{align*}
        D(c^1,\ldots,c^N) \coloneqq \sup_{f_1^{1:N}\in \Phi^0(c^{1:N})} \sum_{i=1}^N \int_{\O^i_1} f^i_1(\o^i_1) \P^i(\mathrm{d}\o^i_1), \quad
        V(c^1,\ldots,c^N) \coloneqq  \inf_{\Y \in {\rm FP}(\Yc)} \sum_{i=1}^N \Cc\Wc_{c^i}(\X^i,\Y).
    \end{align*}

    \medskip \underline{Part A} (Compact $\Yc$): In this part, we assume that $\Yc$ is compact. For every $i \in \{1,\ldots,N\}$, let $\mathcal C^-_i$ be the set of non-negative continuous functions on $\O^i \times \Yc$ that are dominated from above by $\ell^i+ C$, and let $\mathcal M^-_i$ be the set of non-negative measurable functions on $\O^i \times \Yc$ that are dominated from above by $\ell^i+C$. Here, we denote $C\coloneqq \sup_{y \in \Yc}k(y)$, which is finite thanks to compactness of $\Yc$ and $k$ being bounded on compacts.

    \medskip  \emph{Step 1 and 2} (Continuity of $D$ and $V$ from below and of $V$ from above): First, we show that $V$ and $D$ are continuous from below on $\mathcal C^-_{1:N}$ and $V$ is continuous from above on $\mathcal M^-_{1:N}$. This can be done in the very same fashion as before using compactness of $\Yc$.

    \medskip  \emph{Step 3} (Continuity from above of $D$ and attainment): Next, we show that $D$ is jointly continuous from above on $\Mc^-_{1:N}$ and that $D(c)$ is attained. To that end, for every $i \in \{1,\ldots,N\}$, let $(c^{i,n})_{n \in \N}$ be a sequence in $\Mc^-_i$ such that $c^{i,n} \searrow c^i$ for some $c^i \in \Mc^-_i.$
    Our assumptions clearly give $\inf_{n \in \N} D(c^{1,n},\ldots,c^{N,n}) \in \R.$
    Let us denote 
    \[ 
        S^n \coloneqq D(c^{1,n},\ldots,c^{N,n})= \sup_{f_1^{1:N}\in \Phi^0(c^{1:N,n})} \sum_{i=1}^N \int_{\O^i_1} f^i_1(\o^i_1) \P^i(\mathrm{d}\o^i_1),\quad{\rm and}\quad S \coloneqq \inf_{n \in \N} S^n. 
    \]

    For $n \in \N$, let $f^{1:N,n}_1 \in \Phi^0(c^{1:N,n})$ be a $1/n$-minimizer for $D(c^{1:N,n})$.
    We can invoke \Cref{lem:step3.attain_bary} to find $f^{1:N}_1 \in \Phi^0(c^{1:N})$ with
    \[
        S=\limsup_{n \rightarrow \infty} \sum_{i = 1}^N \int_{\O^i_1} f^{i,n}_1 \mathrm d \P^i \le \sum_{i = 1}^N \int_{\O^i_1} f^i_1 \mathrm d \P^i \le D(c^{1:N}) \le S,
    \]
    where the last inequality follows from weak duality. Thus, we have shown attainment and that $D$ is continuous from above on $\Mc_{1:N}^-$.

    \medskip\emph{Step 4} (Choquet): Finally, we invoke the multidimensional Choquet capaticability theorem, see \Cref{lem:choquet}, to obtain
    \begin{align*}
        D(c^1,c^2,\ldots,c^N) &= \inf \{ D(\hat c^1,\hat{c}^2,\ldots,\hat{c}^N) \,\vert\, \hat c^i \ge c^i,\; \hat c^i \text{ is l.s.c., lower-bounded,}\;\hat  c^i \in \Mc^-_i \} \\
        &=\inf \{ V(\hat c^1,\hat{c}^2,\ldots,\hat{c}^N) \,\vert\, \hat c^i \ge c^i,\; \hat c^i \text{ is l.s.c., lower-bounded},\; \hat  c^i \in \Mc^-_i \}\\
        &= V(c^1,c^2,\ldots,c^N),
    \end{align*}
    where the second equality is due to \Cref{prop:lsc_bary}. This concludes the proof.

    \medskip \underline{Part B} ($\sigma$-compact $\Yc$): It remains to verify the result for a general $\sigma$-compact path space $\Yc$.
    For $n \in \N$ and $t \in \{1,\dots,T\}$, let $\Yc_t^n \subseteq \Yc_t$ be compact with $\Yc_t^n \nearrow \Yc_t$.
    Define $\xi_t^n : \Yc_t \longrightarrow \Yc_t^n$ by
    \[
        \xi_t^n(y_t) \coloneqq 
        \begin{cases}
            y_t & y_t \in \Yc_t^n, \\
            y_t^0 & \text{otherwise},
        \end{cases}
    \]
    where $y_t^0 \in \Yc_t^1$ is fixed, and set $\xi^n \coloneqq \xi^n_{1:T} : \Yc \longrightarrow \Yc^n$.
    We further note that the space $\Yc^n \coloneqq \Yc^n_{1:T}$ is compact.
    Finally, we define $c^{i,n} : \O^i \times \Yc \longrightarrow \R$ by $c^{i,n}(\o^i,y) \coloneqq c^i(\o^i,\xi^n(y))$ and write $c^i_{|\Yc^n}$ for $c^i$ restricted to $\O^i \times \Yc^n$.

    \medskip First, note that
    \[
        \lim_{n \to \infty} D(c^{1:N}_{|\Yc^n}) =
        \lim_{n \to \infty} \inf_{\nu \in \Pc(\Yc^n)} \sum_{i = 1}^N \Cc\Wc_{c^i}(\X^i,\Y^\nu) \ge
        \inf_{\nu \in \Pc(\Yc)} \sum_{i = 1}^N \Cc\Wc_{c^i}(\X^i,\Y^\nu),
    \]
    since we have for all $n \in \N$
    \[
        \inf_{\nu \in \Pc(\Yc^n)} \sum_{i = 1}^N \Cc\Wc_{c^i}(\X^i,\Y^\nu)  = 
        \inf_{\Y \in {\rm FP}(\Yc^n)} \sum_{i = 1}^N \Cc\Wc_{c^i_{|\Yc^n}}(\X^i,\Y^\nu) \ge 
        \inf_{\Y \in {\rm FP}(\Yc)} \sum_{i = 1}^N \Cc\Wc_{c^i}(\X^i,\Y^\nu).
    \]

    \medskip As weak duality holds, see \Cref{prop:weak_duality}, it remains to show that
    \[
        \lim_{n \to \infty} \sup_{f_1^{1:N} \in \Phi^0(c^{1:N}_{|\Yc^n})} \sum_{i = 1}^N \int_{\O^i_1} f^i_1 \mathrm d\P^i =
        \sup_{f_1^{1:N} \in \Phi^0(c^{1:N})} \sum_{i = 1}^N \int_{\O^i} f^i_1 \mathrm d\P^i.
    \]
    To this end, for every $n \in \N$, let $f^{1:N,n} \in \Phi^0(c^{1:N}_{|\Yc^n})$ be an optimizer to the barycenter problem with costs $c^{1:N}_{|\Yc^n}$, which exists by Part A of this proof as $\Yc^n$ is compact.
    Fix $\nu \in \Pc(\Yc)$ and write $\nu_n \coloneqq (\xi^n)_\# \nu$.
    Then, there are $\tilde g^{1:N,n} \in \Gc_{\Yc^n}^N$ and $\tilde f_{2:T}^{1:N,n} \in \Ac_{1:N,2:T}^{\Yc}(\Yc^n),$ where $\Ac_{1:N,2:T}^{\Yc}(\Yc^n)$ is the set of martingale compensators corresponding to the space $\Yc^n$, such that
    \[
        f_1^{i,n} + \sum_{t = 2}^T \tilde f_t^{i,n} + \tilde g^{i,n} \le c^i_{|\Yc^n}
        \text{ and }
        \sum_{i = 1}^N \tilde g^{i,n} = 0 \quad \cplc(\X^i,\Y^{\nu_n})\text{--q.s.}
    \]
    For $(\o^i,y) \in \O^i \times \Yc$ and $t \in \{2,\dots,T\}$ we set
    \[
        f_t^{i,n}(\o^i_{1:t},y_{1:t-1}) \coloneqq \tilde f_t^{i,n}(\o^i_{1:t}, \xi^n_{1:t-1}(y_{1:t-1}))
        \quad \text{and} \quad
        g^{i,n}(y) \coloneqq \tilde g^{i,n}(\xi^n(y)).
    \]
    We observe that $f_{2:T}^{1:N,n} \in \Ac_{1:N,2:T}^{\Yc}$ and $g^{1:N,n} \in \Gc_{\Yc}$ and that
    \[
        f_1^{i,n} + \sum_{t = 2}^T f_t^{i,n} + g^{i,n} \le c^{i,n}\text{ and } \sum_{i = 1}^N g^{i,n} = 0
        \quad\cplc(\X^i,\Y^\nu)\text{--q.s.,}        
    \]
    where we used that if $\pi \in \cplc(\X^i,\Y^\nu)$ then $((\o^i,y) \mapsto (\o^i,\xi^n(y)))_\# \pi \in \cplc(\X^i,\Y^{\nu_n})$.
    The latter follows readily by $\xi^n_t$ only depending on $y_t$ for every $t \in \{1,\dots,T\}$.
    Hence, as $\nu \in \Pc(\Yc)$ was arbitrary, we have shown that $f_1^{1:N,n} \in \Phi^0(c^{1:N,n})$.
    This allows us to invoke \Cref{lem:step3.attain_bary} to find $f_1^{1:N} \in \Phi^0(c^{1:N})$ with
    \[
        S \le \lim_{n \to \infty} \sum_{i = 1}^N \int_{\O^i_1} f_1^{i,n} \mathrm d\P^i \le
        \sum_{i = 1}^N \int_{\O^i_1} f_1^i \mathrm d\P^i \le D(c^{1:N}) \le V(c^{1:N}) \le S.
    \]
    Again, we conclude that $S = D(c^{1:N}) = V(c^{1:N})$ and that $f_1^{1:N}$ is a dual optimizer.
\end{proof}

\begin{lemma} \label{lem:step3.attain_bary}
   In the setting of {\rm \Cref{thm:attain_bary}}, for every $(i,n) \in \{1, \dots, N\} \times \N$, let $c^{i,n} : \O^i \times \Yc \longrightarrow [0,\infty)$ be measurable such that $c^i(\o^i,y) \le \ell^i(\o^i) + k(y)$, $(\o^i,y) \in \O^i \times \Yc$, and $c^{1:N,n} \longrightarrow c^{1:N}$ holds pointwise.
    Further, for $n \in \N$, let $f_1^{1:N,n} \in \Phi^0(c^{1:N,n}).$
    Then, there exists $f_1^{1:N} \in \Phi^0(c^{1:N})$ such that
    \begin{equation}
        \label{eq:step3.attain_bary.2}
        \sum_{i = 1}^N \int_{\O^i_1}f_1^{i}(\o^i_1) \P^i(\mathrm d\o^i) \ge \limsup_{n \to \infty} \sum_{i = 1}^N \int_{\O^i_1}f_1^{i,n}(\o^i_1)\P^i(\mathrm{d}\o^i_1).
    \end{equation}
\end{lemma}

\begin{proof}
    Without loss of generality, we can assume that $S\coloneqq \limsup_{n \to \infty} \sum_{i = 1}^N \int_{\O^i_1}f_1^{i,n}(\o^i_1) \P^i(\mathrm{d}\o^i_1)> -\infty,$  $\int \ell^i \mathrm d\P^i = 0$ for $i \in \{1,\dots,N\}$, and that the limit superior in \eqref{eq:step3.attain_bary.2} is simply a limit, and also that for all $n \in \N$
    \begin{equation} \label{bound2}
        \sum_{i=1}^N \int_{\O^i_1} f_1^{i,n} \mathrm{d}\P^i \geq S-1.
    \end{equation}

    \medskip \emph{Step 1} (Construction of $f_1^{1:N}$):
    Let $\nu \in \Pc(\Yc)$ be such that $k \in \L^1(\nu)$. 
    Then, by definition of $\Phi^0(c^{1:N})$ and using \Cref{rem:qs_bary}, there are functions $g^{i,n} \in \Gc_\Yc$ and $f^{i,n}_{2:T} \in \Ac_{2:T,i}^{\Yc}$ and a $\cplc(\X^i,\Y^\nu)$--full Borel set $A^{i,0} = \boxtimes_{t = 1}^T (A^{i,t}_\bullet \times B^t_\bullet)$ such that everywhere on $A^{i,0}$ holds
    \begin{equation} \label{bound}
        f_1^{i,n}(\o^i)+\sum_{t=2}^T f_t^{i,n}(\o^i_{1:t},y_{1:t-1}) +g^{i,n}(y) \leq c^{i,n}(\o^i,y) \text{ and } \sum_{i = 1}^N g^{i,n}(y) = 0.
    \end{equation}
    By shifting $f^{i,n}$ and $g^{i,n}$ by constants, we assume without loss of generality that for all $(i,j,n) \in \{1,\ldots,N\}^2 \times \N$
    \[ 
        \int_{\O^i_1} f_1^{i,n} \mathrm{d}\P^i=\int_{\O^j_1} f_1^{j,n} \mathrm{d}\P^j.
    \] 
    We note that the projections of $A^{i,0}$ onto $\O^i_{1:t} \times \Yc$ are Borel by \Cref{rem:qs_bary}. Moreover, arguing similarly as in \eqref{eqn:bound_y}, we can replace $\tilde B$ by a $\nu$-full subset $B$ such that the projection of $B$ on $\Yc_{1:t},$ denoted by $B^t,$ is Borel and the map
    \begin{equation}
        \label{eqn:bary_inf_meas}
        B^t \ni y^{1:t} \longmapsto \inf_{\tilde y \in \tilde B: \, y_{1:t} = \tilde y_{1:t}} k(\tilde y)
    \end{equation} is Borel measurable.
    Integrating both sides of inequality \eqref{bound} with respect to $\P^i(\mathrm{d}\o^i)$ and using $c^i\leq k$ gives 
    \[
        \int_{\O_1^i} f^{i,n}_1(\o^i_1)\P^i(\mathrm{d}\o^i_1)\leq k(y)-g^{i,n}(y),
     \]
    where $(i,n,y) \in \{1,\ldots,N \} \times \N \times B$, whence, using \eqref{bound2} we find
    \[ 
        g^{i,n}(y) \leq - \int_{\O^i_1} f_1^{i,n}\mathrm{d}\P^i+k(y)\leq - \frac{S-1}{N}+k(y).
    \] 
    Since $\sum_{i=1}^N g^{i,n}=0$, the preceding inequality yields 
    \[ 
        g^{i,n}(y) = - \sum_{j \neq i} g^{j,n}(y) \geq  (N-1)\frac{(S-1)}{N} - (N-1) k (y). 
    \]
    Consequently, we obtain
    \begin{equation} \label{wbound} 
    \left\lvert g^{i,n}(y) \right\rvert \leq \lvert S-1 \rvert+ N k(y), \quad (i,n,y) \in \{1,\ldots,N \} \times \N \times  B.
    \end{equation}
    For $(i,n) \in \{1,\ldots,N \} \times \N$, integrating both sides of the inequality \eqref{bound} with respect to $K^{i}_2(\o_1^i;\mathrm{d}\o^i_{2:T})$ and using $c^i \leq \ell^i + k$ and the bound \eqref{wbound} yield 
    \begin{equation} \label{boundofvarphi}
        f_1^{i,n}(\o_1^i)\leq \ell^i(\o_1^i) + \lvert S-1 \rvert + (N + 1) k(y), \quad (\o^i_1,y) \in A^{i,1} \times B.
    \end{equation} 
    Combining \eqref{boundofvarphi} with \eqref{bound2} we conclude
    \[ 
        \int_{\O^i_1}  \lvert f_1^{i,n}\rvert \mathrm{d}\P^i=2\int_{\O^i_1} (f_1^{i,n})^+ \mathrm{d}\P^i-\int_{\O^i_1}  f_1^{i,n} \mathrm{d}\P^i\leq 2\Big(\int_{\O^i_1} |\ell^i| \mathrm d\P^i + \lvert S-1 \rvert+ (N + 1)k(y)\Big)+\frac{\lvert S-1 \rvert}{N}.
    \]
Thus, we can apply Koml\'{o}s' lemma \cite[Theorem 1]{Ko67} to obtain that there exists a subsequence such that its Ces\`aro means converge to a limit $f_1^i \in \L^1(\Fc_1^i,\P^i)$ $\P^i$--almost surely. Redefining $A^{i,1}$ if necessary, we may assume that the convergence holds everywhere on $A^{i,1}$, and we set $f_1^i=-\infty$ outside of $A^{i,1}.$
For simplicity, we shall denote the sequence of Ces\`aro means of this subsequence by $(f_1^{i,n})_{n \in \N}$. Taking the Ces\`aro means of the same subsequences of $(g^{i,n})_{n \in \N},$ $(h^{i,n})_{n \in \N}$, and $(c^{i,n})_{n \in \N},$ where by abuse of notation we write $g^{i,n}$, resp.\ $h^{i,n}$ and $c^{i,n}$, for the corresponding Ces\`aro means, all the previously shown inequalities remain true. 
Moreover, due to the uniform upper bound \eqref{boundofvarphi} we can invoke Fatou's lemma and get
\[ 
    \sum_{i=1}^N \int_{\O^i_1} f^i_1\mathrm{d}\P^i = \int_{\O_1^1\times \cdots \times \O^N_1} \limsup_{n \rightarrow \infty} \Big( \sum_{i=1}^N f_1^{i,n}\Big)\mathrm{d}(\P^1 \otimes \cdots\otimes \P^N) \geq \limsup_{n \rightarrow \infty} \sum_{i=1}^N \int_{\O^i_1} f_1^{i,n}\mathrm{d}\P^i = S.
\] 
It remains to verify that $f^i_1$, $i \in \{1,\ldots,N\}$, are admissible, \emph{i.e.}, there are $g^i \in \Gc_{\Yc}$ and $f_{2:T}^i \in \Ac_{2:T,i}^{\Yc}$ such that 
\[
   \sum_{i = 1}^N  g^i(y) = 0 \text{ and } f^i_1(\o^i_1) + \sum_{t=2}^T f_2^i(\o^i_{1:t},y_{1:t-1}) +g^i(y) \leq c^i(\o^i,y),\quad \cplc(\X^i,\Y^\nu)\text{--q.s}.
\] 
We point out that convergence of the sequence $(f^{i,n}_1)_{n \in \N}$ is clearly independent of the choice of $\nu$. That is to say, if $\tilde \nu \in \Pc(\Yc)$ and $\tilde g^{i,n} \in \Gc_\Yc$ and $ \tilde f^{i,n}_{2:T} \in \Ac_{2:T,i}^{\Yc}$, $i \in \{1,\ldots,N\}$ are such that
    \begin{equation*} 
        f_1^{i,n}(\o^i)+\sum_{t=2}^T \tilde f_t^{i,n}(\o^i_{1:t},y_{1:t-1}) + \tilde g^{i,n}(y) \leq c^{i,n}(\o^i,y) \text{ and } \sum_{i = 1}^N \tilde g^{i,n}(y) = 0,\quad \cplc(\X^i,\Y^{\tilde \nu})\text{--q.s.,}
    \end{equation*}  
        then passing to the subsequence found above and taking Ces\`aro means, we obtain that $f^{i,n}_1 \longrightarrow f^i_1$ on $A^{i,1}.$ 

    \medskip \emph{Step 2} (Construction of $g^{1:N}$):
    The bound established in \eqref{wbound} and using $k \in \L^1(\nu)$ yields that \[\sup_{n \in \N} \int_{\Yc} |g_1^{i,n}| \mathrm d \nu < \infty,\quad i \in \{1,\dots,N\}. \]
    We can thus employ Koml\'{o}s' lemma \cite[Theorem 1]{Ko67} to obtain that the Ces\`{a}ro means of a subsequence of $(g^{i,n})_{n \in \N}$ converge $\nu$--almost everywhere.
    Thus, replacing the original sequence as well as the other sequences with Ces\`{a}ro means of this subsequence, we may assume without loss of generality that $(g^{i,n})_{n \in \N}$ converges on a $\nu$--full Borel subset of $B$ to a limit $g^i$, for every $i \in \{1,\ldots,N\}.$
    Potentially by replacing $B$ with that subset, we can assume that this convergence holds everywhere on $B$, and set $g^{i}$ to $0$ on the complement.

    \medskip \emph{Step 3} (Construction of martingale compensators):
    Observe that the only constraint that is coupling, for $i \in \{1,\dots,N\}$, the inequalities in \eqref{bound}, is the congruency condition $\sum_{i = 1}^N g^{i,n}(y) = 0$.
    In the previous step we have constructed suitable sequences such that $g^{1:N,n} \longrightarrow g^{1:N}$ pointwise on $B$ and the limits satisfy the congruency condition.
    Hence, for the rest of the proof these inequalities completely decouple, which allows us to use the same construction as in Step 3.2 and Step 3.3 of \Cref{thm:caus_dual} and thereby find suitable martingale compensators $f_{2:T}^{1:N}$ such that on the $\cplc(\X^i,\Y^\nu)$--full Borel set $\boxtimes_{t = 1}^T (A^{i,t}_\bullet \times B^t_\bullet)$ we have
    \[
        f_1^i(\o^i_1) + \sum_{t = 2}^T f_t^i(\o^i_{1:t},y_{1:t-1}) + g^i(y) \le c^i(\o^i,y).
    \]

    \medskip Since $\nu \in \Pc(\Yc)$ with $k \in \L^1(\nu)$ was arbitrary, we conclude by \Cref{lem:attain_bary.k_int} that $f_1^{1:N} \in \Phi^0(c^{1:N})$.
\end{proof}

\begin{lemma} \label{lem:attain_bary.k_int}
    In the setting of {\rm\Cref{thm:attain_bary}}, let $f_1^{1:N} \in \Ac_1^{\Yc}$ be such that for every $\nu \in \Pc(\Yc)$ with $k \in \L^1(\nu)$ there are $(f_{2:T}^{1:N},g^{1:N}) \in \Ac_{1:N,2:T}^{\Yc} \times \Gc_{\Yc}^N$ with
    \begin{equation}
        \label{eq:attain_bary.k_int.1}
        \sum_{t = 1}^T f_t^i + g^i \le c^i \text{ and } \sum_{i = 1}^N g^i = 0 \quad \cplc(\X^i,\Y^\nu)\text{--{\rm q.s.}}
    \end{equation}
    for every $i \in \{1,\dots,N\}$.
    Then, $f_1^{1:N} \in \Phi^0(c^{1:N})$.    
\end{lemma}

\begin{proof}
    To show the claim, let $\nu \in \Pc(\Yc)$ be arbitrary.
    As $\Yc$ is $\sigma$-compact, we have that $\Yc = \bigcup_{n \in \N} \Yc^n$ where $\Yc^n$ is compact.
    Since $k_{|_{\Yc^n}}$ is bounded for every $n \in \N$, we can find weights $(w_n)_{n \in \N} \in [0,1]^\N$ with $\sum_{n \in \N} w_n = 1$ such that
    \[
        \tilde \nu \coloneqq \sum_{n \in \N} w_n \nu_{|_{\Yc^n}} \gg \nu \text{ and }k \in \L^1(\tilde \nu).
    \]
    By assumption, there are $(f_{2:T}^{1:N},g^{1:N}) \in \Ac_{1:N,2:T}^{\Yc} \times \Gc_{\Yc}^N$ so that \eqref{eq:attain_bary.k_int.1} holds $\cplc(\X^i,\Y^{\tilde \nu})$--q.s.
    But, it is immediate from the representation of polar sets in \Cref{thm:polar_causal} that every $\cplc(\X^i,\Y^{\tilde \nu})$--full set is also $\cplc(\X^i,\Y^\nu)$--full, because $\nu \ll \tilde \nu$.
    Hence, \eqref{eq:attain_bary.k_int.1} holds also $\cplc(\X^i,\Y^\nu)$--q.s., which concludes the proof.
\end{proof}

In the proof of \Cref{thm:attain_bary} the construction of suitable martingale compensators for the dual optimizers forces us to fix a distribution $\nu \in \Pc(\Yc)$.
This part can be strengthened when working under the continuum hypothesis which then allows us to do a simplified construction based on a transfinite induction over the set of all causal couplings with arbitrary second marginal, \emph{i.e.}, the set $\cplc(\X^i,\ast) \coloneqq \bigcup_{\nu \in \Pc(\Yc)} \cplc(\X^i,\Y^\nu)$. Consequently, the optimal martingale compensators can be `aggregated' over all couplings. We refer an interested reader \emph{e.g.}\ to \cite[Chapter 4]{Ciesielski_1997} for more details.

\begin{remark} We remark that when working with a non-dominated set of probabilities, the fact that `aggregation' of some terms is only possible only under some additional set-theoretic axioms appears throughout the literature. We refer for instance to  {\rm \citeauthor*{BaChKu19} \cite{BaChKu19}} and {\rm \citeauthor*{Nu11} \cite{Nu11}}.
\end{remark} 

\medskip Let us define for $i \in \{1,\dots,N\}$ and $t \in \{2,\dots,T\}$ the following sets of functions
\begin{align*}
    \Ac_{i,t}^{\Yc, u} &\coloneqq \bigg\{ 
    f_t^i : \O^i_{1:t} \times \Yc_{1:t-1} \longrightarrow \R \, \bigg\vert  \, f_t^i \text{ is $\cplc(\X^i,\ast)$--universally measurable and $\cplc(\X^i,\ast)$--q.s.\ holds} \\ 
    &\phantom{\coloneqq \Big\{ f_t^i\,\Big\vert\,}\qquad
    f^i_t(\o^i_{t-1},\,\cdot\,,y_{1:t-1}) \in \L^1(K_t^i(\o^i_{1:t-1};\,\cdot\,)) \text{ and } \int_{\O^i_{t}} f^i_t(\o^i_{t-1},\tilde \o^i_t ,y_{1:t-1}) K^i_t(\o^i_{1:t-1};\mathrm{d}\tilde \o^i_t) = 0 \bigg\}, \\
    \Ac_{i,1}^{\Yc,u} &\coloneqq \Big\{ f_1^i(\o^i,y) = a^i_1(\o^i_1) \,\Big\vert\, a_1^i \in \L^1(\Fc_1^i,\P^i) \Big\}.
\end{align*}
Furthermore, we set $\Gc_\Yc^{\rm u} \coloneqq \{ g : \Yc \longrightarrow  \R \,\vert\, g \text{ is universally measurable} \}$ and define the set of admissible dual potentials by
\begin{multline}
    \Phi^{\rm u}(c^{1:N}) \coloneqq \Big\{ f_1^{1:N} \in \prod_{i = 1}^N \L^1(\Fc_1^i,\P^i) \,\Big\vert\, \exists f_{2:T}^i \in \Ac_{2:T}^{\Yc, u},\,\exists g^i \in \Gc_\Yc^{\rm u}, i \in \{1,\dots,N\} : \\
    f_1^i + \sum_{t = 2}^T f_t^i + g^i \le c^i,\, \sum_{i = 1}^N g^i = 0 \Big\}.
\end{multline}

\begin{remark}[On universal measurability] \label{rem:universal_measurability}
    $(i)$ A function $g : \Yc \longrightarrow \R$ is called universally measurable if, for every $\nu \in \Pc(\Yc)$, $g$ is measurable with respect to the $\nu$-completion of the Borel $\sigma$-algebra on $\Yc$.
    The latter can be equivalently expressed as: for every $\nu \in \Pc(\Yc)$, there exists a Borel function $g^{\nu} : \Yc \longrightarrow \R$ with $g = g^{\nu}$ $\nu$--a.s.

    \medskip $(ii)$ Similarly, a function $a : \O^i \times \Yc \to \R$ is called $\cplc(\X^i,\ast)$--universally measurable if, for every $\pi \in \cplc(\X^i,\ast)$, $a$ is measurable with respect to the $\pi$-completion of the Borel $\sigma$-algebra on $\O^i \times \Yc$.
    Again, the latter can be equivalently expressed as: for every $\pi \in \cplc(\X^i,\ast)$, there exists a Borel function $a^{\pi} : \O^i \times \Yc \longrightarrow \R$ with $a = a^{\pi}$ $\pi$--a.s.

    \medskip $(iii)$ Let $a: \O^i \times \Yc \longrightarrow \R \cup \{ - \infty \}$ and let $k : \Yc \longrightarrow \R^+$ be a function such that $a$ is measurable with respect to the $\pi$-completion of the Borel $\sigma$-algebra on $\O^i \times \Yc$ for every $\pi \in \cplc(\X^i,\ast)$ which finitely integrates $k$.
    We claim that if $\Yc$ is $\sigma$-compact and $k$ is bounded on compacts, $a$ is already $\cplc(\X^i,\ast)$--universally measurable.
    Indeed, let $\Yc^K,$ $K\in\N,$ be compacts such that $\Yc^K \nearrow \Yc$ and $\sup_{y \in \Yc^K} |k(y)| \leq K.$ Indeed, these exist since for any sequence of compacts $\tilde \Yc^i \nearrow \Yc$ with $\tilde \Yc^1=\{y_0\}$ for some $y_0 \in \Yc$, we have that $s_i\coloneqq\sup_{y \in \tilde \Yc^i} |k(y)|<\infty$ and we can without loss of generality assume $s_1=1.$ It then suffices to set $i_1=1$, and inductively define
    \[
        i_{K+1}\coloneqq 
    \begin{cases} i_K &\text{if } s_{i_K+1}>K, \\
    i_K+1 &\text{else,}
    \end{cases}
    \quad \text{and} \quad
    \Yc^K\coloneqq \tilde \Yc^{i_K}.
    \]
    Let now $\pi \in \cplc(\X^i,\ast)$ be arbitrary.
   It is immediate that
    \[
        \pi^K \coloneqq \big((\o^i,y)\mapsto(\o^i,\xi(y)\big)_\# \pi \in \cplc(\X^i,\ast),
    \] where $\xi$ is as in the proof of {\rm\Cref{thm:attain_bary}}.
    It is also immediate that $\int k^i(y) \mathrm{d}\pi^K \le K$.
    By construction we have that $\pi \ll \tilde \pi \coloneqq \sum_{n \in \N} 2^{-n} \pi^{n} \in \cplc(\X^i,\ast)$ and $\int k^i \mathrm{d}\tilde \pi \le \sum_{n \in \N} n 2^{-n} < \infty$.
    Hence, there is a Borel function $\tilde a : \O^i \times \Yc \longrightarrow \R \cup \{ -\infty \}$ such that $a = \tilde a$ $\tilde \pi$--a.s., and therefore also $\pi$--a.s.
    We conclude that $a$ is $\cplc(\X^i,\ast)$--universally measurable. 
\end{remark}

\begin{theorem} \label{thm:attain_bary.CH}
    In the setting of {\rm\Cref{thm:attain_bary}}, assume {\rm ZFC}\footnote{That is, assume the Zermelo–Fraenkel set theory axioms and the axiom of choice.} and the continuum hypothesis.
    Then, we have
    \[
        \inf_{\Y \in {\rm FP}(\Yc)} \sum_{i = 1}^N \Cc\Wc_{c^i}(\X^i,\Y) = \sup_{f_1^{1:N} \in \Phi^{\rm u}(c^{1:N})} \int_{\O^i_1} f_1^i(\o^i_1) \P^i(\mathrm d\o^i_1).
    \]
    Moreover, there is $f^{1:N}_1 \in \Phi^{\rm u}(c^{1:N})$ such that
    \[
        \inf_{\Y \in {\rm FP}(\Yc)} \sum_{i = 1}^N \Cc\Wc_{c^i}(\X^i,\Y) = \sum_{i = 1}^N \int_{\O^i_1} f_1^i(\o^i_1) \P^i(\mathrm{d}\o^i_1).
    \]
\end{theorem}

\begin{proof}
    This proof works analogously to the proof of \Cref{thm:attain_bary} with obvious modifications.
    In particular, we replace \Cref{lem:step3.attain_bary} with \Cref{lem:step3.attain_bary.CH}.
\end{proof}

\begin{lemma} \label{lem:step3.attain_bary.CH}
    In the setting of {\rm\Cref{thm:attain_bary.CH}}, for every $(i,n) \in \{1,\dots,N\} \times \N$, let $c^{i,n} : \O^i \times \Yc \longrightarrow [0,\infty)$ be measurable such that $c^{i,n}(\o^i,y) \le \ell^i(\o^i) + k(y)$, $(\o^i,y) \in \O^i \times \Yc$, and $c^{1:N,n} \longrightarrow c^{1:N}$ holds pointwise.
    Further, for $n \in \N$, let $f_1^{1:N,n} \in \Phi^u(c^{1:N,n}).$ Then, there exists $f_1^{1:N} \in \Phi^u(c^{1:N})$ such that
    \begin{equation}
        \label{eq:lem.step3.attain_bary.CH.2}
        \sum_{i = 1}^N \int_{\O^i_1} f_1^i \mathrm d\P^i \ge \limsup_{n \to \infty} \sum_{i = 1}^N \int_{\O^i_1} f_1^{i,n} \mathrm d\P^i.
    \end{equation}
\end{lemma}

\begin{proof}
    Without loss of generality, we can assume that $S\coloneqq \limsup_{n \to \infty} \sum_{i = 1}^N \int_{\O^i_1} f_1^{i,n} \mathrm d\P^i > -\infty$ and that the limit superior in \eqref{eq:lem.step3.attain_bary.CH.2} is simply a limit, and also that for all $n \in \N$
    \begin{equation}
        \label{eq:lem.step3.attain_bary.CH.3}
        \sum_{i = 1}^N \int_{\O^i_1} f_1^{i,n}(\o^i_1) \P^i(\mathrm d\o^i_1) \ge S - 1.
    \end{equation}
    \emph{Step 1} (Construction of $f_1^{1:N}$): This step can be done analogously to Step 1 of \Cref{lem:step3.attain_bary} and, for every $i \in \{1,\dots,N\}$, we can assume without loss of generality that $f_1^{i,n} \longrightarrow f_1^i$ $\P^i$--almost surely with
    \[  
        \sum_{i = 1}^N \int_{\O^i_1} f_1^i(\o^i_1)\P^i(\mathrm{d}\o_1^i) \geq S.
    \]
    It remains to show that $f_1^{1:N} \in \Phi^{\rm u}(c^{1:N})$.

    \medskip \emph{Step 2} (Construction of $g^{1:N}$ and $f_{2:T}^{1:N}$): 
    Thanks to the continuum hypothesis and the axiom of choice, there exists a bijection of the set of countable ordinal numbers $\mathscr O$ and the set $\{ \pi^i \in \cplc(\P^i,\ast) \,\vert\, k \in \L^1(\pi^i)\}$, denoted by $\mathscr O \ni \alpha \longmapsto \pi^{\alpha,i}$. 
    We proceed by a transfinite induction over $\mathscr O$.

    \medskip \emph{Step 2.1} (Successor case): Let $\alpha \in \mathscr O$ be such that there is a sequence $(f_{1:N}^{\alpha,1:N,n},g^{\alpha,1:N,n})_{n \in \N}$ with $f_{1:T}^{\alpha,1:N,n} \in \Ac_{1:T}^{\Yc, u}$ and $g^{\alpha,1:N,n} \in \Gc^{\rm u}_\Yc$ such that for all ordinals $\beta \le \alpha$, $t \in \{2,\ldots,T\}$ and $i \in \{1,\ldots,N\}$, 
    \begin{gather}
        \label{eq:transfinite_induction.1}
        (f_t^{\alpha,i,n})_{n \in \N} \text{ and } (g^{\alpha,i,n})_{n \in \N} \text{ converge} 
        \; \pi^{\beta,i}\text{--a.s.,}
    \end{gather}
     as well as, for every $n \in \N$ and $\gamma \in \mathscr O$,
    \begin{gather}
        \label{eq:transfinite_induction.2}
        f^{\alpha,i,n}_1 + \sum_{t = 2}^T f_t^{\alpha,i,n} + g^{\alpha,i,n} \le c^{\alpha,i,n}\enspace\text{$\pi^{\gamma,i}$--a.s.}, \quad f_1^{\alpha,i,n} \longrightarrow f_1^i \enspace \text{$\P^i$--a.s.,}
        \\
        \label{eq:transfinite_induction.2.5}
         c^{\alpha,i,n} \le \ell^i+ k, \quad c^{\alpha,n,i} \longrightarrow c^i \quad\text{pointwise,}\quad 
        \text{and}\quad\sum_{i = 1}^N g^{\alpha,i,n} = 0.
    \end{gather}
    As in \Cref{rem:baryc_L1} and the proof of \Cref{thm:caus_dual}, we find that \[\sup_{n \in \N} \int \Big[ \sum_{t = 2}^T |f_t^{\alpha,i,n}| + |g^{\alpha,i,n}| \Big] \mathrm{d}\pi^{\alpha + 1,i} < \infty. \]
    By Koml\'os' lemma, there is a subsequence of $(f_{2:T}^{\alpha,1:N,n},g^{\alpha,1:N,n})_{n \in \N}$ such that its Ces\'aro means, denoted by $(f_{2:T}^{\alpha + 1,1:N,n},g^{\alpha + 1,1:N,n})_{n \in \N}$,
    satisfy that $(f_{2,T}^{\alpha + 1, i, n},g^{\alpha + 1, i ,n})_{n \in \N}$ converges $\pi^{\alpha + 1,i}$--a.s., for every $i \in \{1,\dots,N\}.$
    We denote the Ces\'aro means of the same subsequence of $(f_1^{\alpha,1:N,n}, c^{\alpha,1:N,n})_{n \in \N}$ by $(f_1^{\alpha + 1,1:N,n},c^{\alpha + 1, 1:N, n})_{n \in \N}$.
    Then we find that the hereby constructed sequence still satisfies \eqref{eq:transfinite_induction.2} and $\eqref{eq:transfinite_induction.2.5}$ as well as \eqref{eq:transfinite_induction.1} for all $\beta \le \alpha + 1$.
    In particular, their limits (where they exist) are consistent in the sense that
    \begin{multline}
        \label{eq:transfinite_induction.3}
        \big\{ (\o^i,y) \in \O^i \times \Yc \,\big\vert\, \forall (i,t)\; \lim_{n\rightarrow \infty} f_t^{\alpha,i,n} \text{ and } \lim_{n\rightarrow \infty} g^{\alpha,i,n} \text{ exist} \big\} \subseteq \\
        \big\{ (\o^i,y) \in \O^i \times \Yc \,\big\vert\, \forall (i,t)\; \lim_{n\rightarrow \infty} f_t^{\alpha + 1,i,n} \text{ and } \lim_{n\rightarrow \infty} g^{\alpha + 1,i,n} \text{ exist} \big\},
    \end{multline}
    and the respective limits coincide (where they exist).

    \medskip \emph{Step 2.2} (Limit case): Next, assume that \eqref{eq:transfinite_induction.1}-\eqref{eq:transfinite_induction.3} hold for all $\beta < \alpha$ where $\alpha \in \mathscr O$ is a limit ordinal.
    Since $\alpha$ is a countable ordinal number, there exists a bijection from $\mathbb N$ to $\{ \beta \in \mathscr O : \beta < \alpha \}$.
    Thus, there is an increasing sequence $(\beta_n)_{n \in \N}$ with $\sup_n \beta_n = \alpha$.
    For $n \in \N$, $i \in \{1,\ldots,N\}$ and $t \in \{1,\ldots,T\}$ we choose the diagonal sequence by setting
    \[
        \tilde f_t^{i,n} \coloneqq f_t^{\beta_n,i,n}, \quad
        \tilde g^{i,n} \coloneqq g^{\beta_n,i,n}, \text{ and } \tilde c^{n,i} \coloneqq c^{\beta_n,i,n}.
    \]
    As in the previous step, the sequence $(\tilde f_{2:T}^{1:N,n},\tilde g^{1:N,n})_{n \in \N}$ admits a subsequence whose Ces\'aro means, denoted by $(f_{2:T}^{\alpha,1:N,n},g^{\alpha, 1:N,n})_{n \in \N}$, satisfy that $(f_{2:T}^{\alpha,i,n}, g^{\alpha,i,n})_{n \in \N}$ converges $\pi^{\alpha,i}$--a.s., for every $i \in \{1,\dots,N\}$.
    Again, we denote the corresponding Ces\'aro means of the same subsequence of $(\tilde f_1^{1:N,n}, \tilde c^{1:N,n})_{n \in \N}$ by $(f^{\alpha,1:N,n},c^{\alpha,1:N,n})_{n \in \N}$. 
    As we chose $(\beta_n)_n$ increasing with limit $\alpha$, this sequence of functions also satisfies \eqref{eq:transfinite_induction.1} and \eqref{eq:transfinite_induction.3} with respect to all $\beta < \alpha$ as well as \eqref{eq:transfinite_induction.2} and \eqref{eq:transfinite_induction.2.5}.

    \medskip \emph{Step 2.3}: As the constructed limits are consistent, the following functions are well-defined
    \begin{align*}
        \tilde f_t^i &\coloneqq 
        \begin{cases}
           \displaystyle \lim_{n \rightarrow \infty} f_t^{\alpha,i,n} & \exists \alpha \in \mathscr O \text{ where the limit exists in }\R, \\
            -\infty & \text{otherwise},
        \end{cases} \\
        g^i & \coloneqq
        \begin{cases}
            \displaystyle\lim_{n \rightarrow \infty} g^{\alpha,i,n} & \exists \alpha \in \mathscr O \text{ where the limit exists in }\R, \\
            -\infty & \text{otherwise},
        \end{cases}            \end{align*}
    and satisfy for every $\alpha \in \mathscr O$, $\pi^{\alpha,i}$--a.s.
    \begin{equation}
        \label{eq:transfinite_induction.tilde.ineq}
        f_1 + \sum_{t = 2}^T \tilde f_t^i + g^i \le c^i \text{ and } \sum_{i = 1}^N g^i = 0.
    \end{equation}
    Observe that $g^i$ has to be everywhere finitely valued, since for every $y \in \mathcal Y$ there exists $\alpha \in \mathscr O$ with $\pi^{\alpha,i} = \mathbb P^i \otimes \delta_y \in \cplc(\X^i,\ast)$ and $\int k \mathrm{d}\pi^{\alpha,i} = k(y) < \infty$.
    In particular, $g^i$ is a universally measurable function on $\Yc$ and $\sum_{i = 1}^N g^i = 0$ everywhere.
    Furthermore, we have that $\tilde f_t^i$ is $\cplc(\X^i,\ast)$--universally measurable by \Cref{rem:universal_measurability}.
    
    \medskip 
    Because we have for fixed $\alpha \in \mathscr O$ that $f_t^{\alpha,i,n} \longrightarrow \tilde f_t^i$ $\pi^{\alpha,i}$--a.s. By similar arguments as in \eqref{wbound} and by passing to the limit, we obtain $g^i \in \L^1(\nu)$ for every $\nu$ such that $k \in \L^1(\nu).$ We conclude as in \eqref{eqn:bound_t} and \eqref{mart2} that $\tilde f_t^i \in \L^1(\pi^{\alpha,i}).$ Indeed, one can inductively verify that the right-hand side in \eqref{eqn:bound_t} is $\pi^{\alpha,i}$-integrable and this inequality is preserved by passing to the limit. It follows that  $\int \tilde f_t^i \mathrm d \pi^{\alpha,i}$ is well defined and \eqref{mart2} gives $\int \tilde f_t^i \mathrm d \pi^{\alpha,i} \geq 0$ by Fubini's theorem. Moreover, as in \eqref{mart2} and due to Fatou's lemma the function
    \[
        f_t^i(\o^i_{1:t},y_{1:t-1}) \coloneqq
        \begin{cases}
            \displaystyle \tilde f_t^i(\o^i_{1:t},y_{1:t-1}) - \int_{\O^i_t} \tilde f_t^i(\o^i_{1:t-1},\tilde \o^i_t,y_{1:t-1}) K_t^i(\o^i_{1:t-1};\mathrm{d}\tilde \o^i_t) & \text{if well-defined},\\
            -\infty &\text{otherwise,}
        \end{cases}
    \]
    is $\pi^{\alpha,i}$--a.e.\ finite and satisfies $f_t^i \le \tilde f_t^i$ $\pi^{\alpha,i}$--a.s.\
    Consequently, we have $\pi^{\alpha,i}$--a.s.
    \begin{equation}
        \label{eq:transfinite_induction.end}
        f_1 + \sum_{t = 2}^T f_t^i + g^i \le f_1 + \sum_{t = 2}^T \tilde f_t^i + g^i \le c^i.
    \end{equation}
    We deduce from \Cref{rem:universal_measurability} that $f_t^i$ is $\cplc(\X^i,\ast)$--universally measurable and \eqref{eq:transfinite_induction.end} holds $\cplc(\X^i,\ast)$--quasi-surely.
    Hence, $f_t^i \in \Ac_{i,t}^{\Yc, u}$ for all $(t,i) \in \{1,\dots,T\} \times \{1,\dots, N\}$, from where we conclude that $f_1^{1:N} \in \Phi^{\rm u}(c^{1:N})$.
\end{proof}

\subsection{Bicausal barycenters} \label{sec:bcbary}
At last, we consider the dual to the adapted barycenter problem. 
To this end, we fix for technical reasons a growth function $k : \Yc \longrightarrow \R$ and consider the minimization problem
\begin{equation}
    \label{eq:bcbary}
    \inf_{\Y \in {\rm FP}(\Yc,k)} \sum_{i = 1}^N \Ac\Wc_{c^i}(\X^i,\Y),
\end{equation}
where ${\rm FP}(\Yc,k)$ denotes the set of all filtered processes $\Y$ with $k(Y) \in \L^1(\P^\Y)$ and, as before, $c^i : \O^i \times \Yc \longrightarrow \R$ is Borel measurable.
Contrary to the causal barycenter problem, it is generally not possible to restrict the infimum in \eqref{eq:bcbary} to minimization over $\{ \Y^\nu \,\vert\, \nu \in \Pc(\Yc),\, k \in \L^1(\nu) \}$. The reason for this is that for the bicausal optimal transport problem, the map $\Y \longmapsto \Y^\nu,$ where $\nu = {\rm Law}_{\P^\Y}(Y)$, does not necessarily decrease the value of $\Ac\Wc_{c^i}(\X^i,\,\cdot\,)$.
Consequently, we have to work with ${\rm FP}(\Yc,k)$, or a suitable representation of it.

\medskip In what follows, we work with the `canonical filtered space' $(\Zc,\Gc_T, (\Gc_t)_{t = 1}^T, \hat Y)$ that was introduced in \citep{BaBePa21}.
The idea behind this space is that every filtered process $\Y \in {\rm FP}(\Yc)$ has a representative on it via the mapping
\[
    \Y \longmapsto (\Zc,\Gc_T, (\Gc_t)_{t = 1}^T, \nu^\Y, \hat Y),
\]
where $\nu^\Y$ denotes the law of ${\rm ip}(\Y)$, the information process of $\Y$. That is to say, the process ${\rm ip}(\Y) = ({\rm ip}_t(\Y))_{t = 1}^T \in \Zc = \prod_{t = 1}^T \Zc_t$ where we set inductively backward in time
\[
    {\rm ip}_T(\Y) \coloneqq \hat Y_T \in \Yc_T \eqqcolon \Zc_T, \quad {\rm ip}_t(\Y) \coloneqq \big(\hat Y_t, {\rm Law}_{\P^\Y}({\rm ip}_{t + 1}(\Y) | \Fc^\Y_t)\big) \in \Yc_t \times \Pc(\Zc_{t + 1}) \eqqcolon \Zc_t,
\]
for $t \in \{1,\dots,T-1\}$.
The remaining terms that make up this canonical filtered space are $(\Gc_t)_{t = 1}^T$, the canonical filtration on $\Zc_{1:T}$ generated by the coordinate projections, and the process $\hat Y = \hat Y_{1:T} : \Zc \longrightarrow \Yc$ where $\hat Y_t$ is the projection onto the $\Yc_t$-coordinate.
It follows from the recursive construction that $\Zc$ is a Polish space.
We remark that the canonical filtered space provides us, in a certain sense, with a minimal representation of filtered processes.
Here, the intuition is that for an element $z_t=(y_t,p_t)  \in \Zc_t$ the first coordinate is the state of the process at time $t$ while the measure $p_t$ describes its `expected behavior after time $t$'.

\medskip Therefore, for $t \in \{2,\dots,T\}$ we denote by $K_t : \Zc_{1:t-1} \longrightarrow \Pc(\Zc_{t:T})$ the measurable kernel defined inductively backward in time by
\[
    K_T(z_{1:T-1};\mathrm d z_T) \coloneqq p_{T-1}(\mathrm d z_T), \quad K_t(z_{1:t-1};\mathrm d z_{t:T}) \coloneqq p_{t-1}(\mathrm dz_t) K_{t+1}(z_{1:t}; \mathrm dz_{t+1:T}),
\]
where $z_{1:t-1} \in \Zc_{1:t-1}$ and $z_{t-1} = (y_{t-1},p_{t-1})$.
Before introducing the set of admissible dual potentials, we want to point out that we shall require $c^i : \O^i \times \Yc \longrightarrow \R$ to be dominated from above by $\ell^i(\o^i) + k(y)$ for suitable $\ell^i$.
In order to have a handle on integrability, which we require in the proof of \Cref{lem:step3.adapt_bary} below, we only take into considerations those $z \in \Zc$ such that $k(y_{1:t},\,\cdot\,)$ is integrable with respect to $K_t(z_{1:t}; \,\cdot\,)$, thus, we set
\[
    \Zc(k) \coloneqq \boxtimes_{t=1}^T \Zc^t_\bullet (k),\enspace \text{where}\enspace \Zc^t_{z_{1:t-1}}(k) \coloneqq \big\{ z_t \in \Zc_t \,\big\vert\, k(y_{1:t},\,\cdot\,) \in \L^1\big(K_{t+1}(z_{1:t};\,\cdot\,)\big) \big\}\enspace {\rm and}\enspace \Zc^T_{z_{1:T-1}}(k)\coloneqq \Zc_T.
\]
For this reason we also consider only those filtered processes $\Y$ such that $\E_{\P^\Y}\big[|k(Y)|\big]<\infty$ in the primal problem.

\medskip Relative to the family of kernels $(K_t)_{t = 2}^T$ we define the set of martingale compensators for $t \in \{2,\ldots,T\}$:
 \begin{align*}   
     \Ac_{i,t}^{\Zc}&\coloneqq \bigg\{ f^i_t(\o^i_{1:t},z_{1:t-1})=a^i_t(\o^i_{1:t},z_{1:t-1})-\int_{\O^i_t} a^i_t(\o^i_{1:t-1},\tilde{\o}^i_t,z_{1:t-1}) K^i_t(\o^i_{1:t-1};\mathrm{d}\tilde{\o}^i_t) \,\bigg\vert\, \\
    &\hspace{4cm}a^i_t \text{ is Borel measurable and } a^i_t(\o^i_{1:t-1},\,\cdot\,,z_{1:t-1}) \in  \L^1\big(\Bc(\O^i_t),K^i_t\big(\o^i_{1:t-1};\,\cdot\,)\big) \bigg\}, \\
    \Bc_{i,t}^\Zc &\coloneqq
    \bigg\{ 
        g_t^i(\o^i_{1:t-1},z_{1:t}) =
        a_t^i(\o^i_{1:t-1},z_{1:t}) - \int_{\Zc_t} a_t^i(\o^i_{1:t-1},z_{1:t-1},\tilde z_t) K_t(z_{1:t-1};\mathrm d\tilde z_t) \,\bigg\vert\, \\
        &\hspace{4cm}a_t^i \text{ is Borel measurable and }a_t^i(\o^i_{1:t-1},z_{1:t-1},\,\cdot\,) \in \L^1\big(\Bc(\Zc_t),K_t(z_{1:t-1};\,\cdot\,)\big)
    \bigg\}.
\end{align*}
and further, for $t = 1$, we let $\Bc_{i,1}^\Zc \coloneqq \Bc_1^\Zc \coloneqq \{ g_1 : \Zc_1 \longrightarrow \R \cup \{ - \infty \} \,\vert\, g_1 \text{ is Borel measurable} \}$.
The set of admissible dual potentials is then given by
\begin{multline} \label{eq:def.bcbary.potentials}
    \Phi^{\Zc}(c^{1:N},k) \coloneqq
    \bigg\{
        f^{1:N}_1 \in \prod_{i = 1}^N \L^1(\Fc_1^i,\P^i) \,\bigg\vert\, \exists f_{2:T}^{1:N} \in \Ac_{1:N,2:T}^\Zc, \exists  g_{1:T}^{1:N} \in \Bc_{1:N,1:T}^\Zc, \forall (\o^{1:N},z) \in \O^{1:N} \times \Zc(k): \\
        f_1^i(\o^i_1) + g_1^i(z_1) + \sum_{t = 2}^T \big[f_t^i(\o^i_{1:t},z_{1:t-1}) + g_t^i(\o^i_{1:t-1},z_{1:t})\big] \le c^i(\o^i,y) \text{ and }\sum_{i = 1}^N g_1^i(z_1) = 0
    \bigg\}.
\end{multline}

For $\nu \in \Pc(\Zc_1)$, we write $\Y^\nu \coloneqq (\Zc, \Gc_T, (\Gc_t)_{t = 1}^T, \nu \otimes K_2 , \hat Y)$ and we further define the set of all canonical filtered processes on $\Yc$ whose law finitely integrates $k$ by 
\[ 
    {\rm CFP}(\Yc,k)\coloneqq \bigg\{ \Y^\nu \coloneqq (\Zc, \Gc_T, (\Gc_t)_{t = 1}^T, \nu \otimes K_2 , \hat Y) \,\bigg\vert\, \nu \in \Pc(\Zc_1), \int_{\Zc} |k(y)| K_2(z_1;\mathrm{d}z_{2:T})\nu(\mathrm{d}z_1) <\infty \bigg\}.
\]

\begin{remark} 
\begin{enumerate}[label = (\roman*)]
\item We remark that when $\Y \in {\rm FP}(\Yc,k)$, that is, $\Y \in {\rm FP}(\Yc)$ with $\E_{\P^\Y}\big[|k(Y)|\big]<\infty,$ the law of the corresponding information process ${\rm ip}(\Y)$ is concentrated on $\Zc(k)$ and the canonical representative of $\Y$ lies in ${\rm CFP}(\Yc,k).$ 

\item We shall write $\Phi^\Zc(c^{1:N}) \coloneqq \Phi^{\Zc}(c^{1:N},1)$ for the set of functions $f^{1:N}_1$ such that \eqref{eq:def.bcbary.potentials} is satisfied everywhere for some $f_{2:T}^{1:N}$ and $g_{1:T}^i.$ This is consistent with the notation above by choosing $k \equiv 1.$ Similarly, the set of all canonical filtered processes is ${\rm CFP}(\Yc) \coloneqq {\rm CFP}(\Yc,1).$

\item  For instance, when $\Xc^i=\Yc=\R^{d \cdot T}$ and $c^i(\o^i,y)=\|\o^i-y\|_2^2,$ then the above is satisfied with $\ell^i=k=2\| \,\cdot\,\|_2^2$ provided that $\ell^i \in \L^1(\P^i).$ In this case, we can restrict the minimization in the primal problem to measures with finite second moments without changing the value of the infimum. 
\end{enumerate}
\end{remark}

\begin{remark}[Pull-back of dual potentials] \label{rem:pull_back}
    When $f_1^{1:N} \in \Phi^\Zc(c^{1:N},k)$ are admissible dual potentials with martingale compensators $(f_{2:T}^{1:N},g_{1:T}^{1:N}) \in \Ac^\Zc_{1:N,2:T} \times \Bc^\Zc_{1:N,1:T}$, then we can pull them back onto the filtered probability space of any process $\Y \in {\rm FP}(\Yc,k)$ in the following way:

    \medskip Since ${\rm ip}(\Y)$ takes $\P^\Y$--almost surely values in $\Zc(k)$, the functions $\bar f_t^i, \bar g_t^i : \O^i \times \O^\Y \longrightarrow \R$ given by
    \[
        \bar f_t^i(\o^i_{1:t},\o^\Y) \coloneqq f_t^i(\o^i_{1:t},{\rm ip}_{1:t-1}(\Y)(\o^\Y)),
        \quad
        \bar g_1^i(\o^\Y) \coloneqq g_1^i({\rm ip}_1(\Y)(\o^\Y)) \text{ and }
        \bar g_t^i(\o^i_{1:t-1},\o^\Y) \coloneqq g_t^i(\o^i_{1:t-1},{\rm ip}_{1:t}(\Y)(\o^\Y)),
    \]
    are well-defined, for $(t,i) \in \{2,\dots,T\} \times \{1,\dots,N\}$. 
    Furthermore, as ${\rm ip}_t(\Y)$ is $\Fc^\Y_t$-measurable, we have that $\bar f_t^i$ is $\Fc_t^i \otimes \Fc_{t-1}^\Y$-measurable while $\bar g_t^i$ is $\Fc^i_{t-1} \otimes \Fc_t^\Y$-measurable.
    The pulled-back potentials are admissible in the sense that
    \[
        f_1^i(\o^i_1) + \bar g_1^i(\o^\Y) + \sum_{t = 2}^T \bar f_t^i(\o^i_{1:t},\o^\Y) + \bar g_t^i(\o^i_{1:t-1},\o^\Y) \le c^i(\o^i,Y(\o^\Y)) \text{ and } \sum_{i = 1}^N \bar g_1^i(\o^\Y) = 0 \quad \cplc(\X^i,\Y)\text{--quasi-surely}.
    \]
\end{remark}

First, we give a duality result for compact spaces and lower-semicontinuous cost functions, which was proved in \cite{AcKrPa23a}.

\begin{proposition}
    Assume that $\Yc$ is compact and that $c^i$, $i \in \{1,\ldots,N\},$ are lower-semicontinuous and lower-bounded.
    Then we have
    \begin{align*}
        \inf_{\Y \in {\rm FP}(\Yc)} \sum_{i = 1}^N \Ac\Wc_{c^i}(\X^i,\Y) 
        = \sup_{f_1^{1:N} \in \Phi^\Zc(c^{1:N})} \sum_{i = 1}^N \int_{\O^i} f_1^i(\o^i_1) \P^i(\mathrm{d}\o^i_1).
    \end{align*}
\end{proposition}

\begin{proof}
The result was proved in \cite{AcKrPa23a} with continuous potentials, while duality with measurable potentials can be shown similarly as in \Cref{prop:lsc_bary}.
\end{proof}

\begin{theorem}[Duality for adapted barycenters] \label{thm:adapt_bary}
    Let $\Yc$ be $\sigma$-compact.
    For every $i \in \{1,\ldots,N\},$ let $c^i : \O^i \times \Yc \longrightarrow \R$ be measurable and lower-bounded, $k : \Yc \longrightarrow \R$ be bounded on compacts, and $\ell^i \in \L^1(\P^i)$ be such that $c^i(\o^i,y) \leq  \ell^i(\o^i)+k^i(y),$ $(\o^i,y) \in \O^i \times \Yc$.
    Then, we have
    \[ 
        \inf_{\Y \in {\rm FP}(\Yc,k)} \sum_{i=1}^N \Ac\Wc_{c^i}(\X^i,\Y)
        =\sup_{f_1^{1:N} \in \Phi^\Zc(c^{1:N},k)} \sum_{i=1}^N \int_{\O^i_1} f^i_1(\o^i_1) \P^i(\mathrm{d}\o^i_1). 
    \]
    Moreover, there is $f^{1:N}_1 \in \Phi^\Zc(c^{1:N},k)$ such that 
    \[
        \inf_{\Y \in {\rm FP}(\Yc,k)} \sum_{i=1}^N \Ac\Wc_{c^i}(\X^i,\Y)=\sum_{i=1}^N \int_{\O^i_1} f^i_1(\o^i_1) \P^i(\mathrm{d}\o^i_1). 
    \]
\end{theorem}

\begin{proof} 
 We use the very same notation as in the proof of \Cref{thm:caus_dual} with the obvious modifications.
 
\medskip \underline{Part A} (compact $\Yc$):
    We first assume that $\Yc$ is compact. Note that Step 1 and Step 2 follow from the same arguments as in the proof of \Cref{thm:caus_dual}.  Moreover, we have that $\Zc(k) = \Zc$ and ${\rm CFP}(\Yc,k)={\rm CFP}(\Yc)$ since $k$ is bounded on compacts.

    \medskip \emph{Step 3} (Continuity from above of D and attainment): In this step we show that $D$ is jointly continuous from above on $\Mc_{1:N}^-$ and that $D(c)$ is attained.
    To that end, for every $i \in \{1,\ldots,N\}$ let $(c^{i,n})_{n \in \N}$ be a sequence in $\Mc_i^-$ such that $c^{i,n} \searrow c^i$ for some $c^i \in \Mc_i^-.$ Note that due to the assumptions, we have $\inf_{n \in \N} D(c^{1,n},\ldots,c^{N,n}) \in \R$.
    Again, we denote
    \[
        S^n \coloneqq D(c^{1,n},\dots,c^{N,n}) = \sup_{f_1^{1:N} \in \Phi^\Zc(c^{1:N,n},k)} \sum_{i = 1}^N \int_{\O^i_1} f_1^i(\o^i_1) \P^i(\mathrm{d}\o^i_1), 
        \quad \text{and} \quad
        S \coloneqq \inf_{n \in \N} S^n.
    \]
    For $n \in \N$, let $f_1^{1:N,n} \in \Phi^\Zc(c^{1:N,n},k)$ be a $1/n$-minimizer for $D(c^{1:N,n}).$ 
    By \Cref{lem:step3.adapt_bary} there is $f_1^{1:N} \in \Phi^\Zc(c^{1:N},k)$ with
    \[
        D(c^{1:N}) \ge \sum_{i = 1}^N \int_{\O^i} f_1^i(\o^i_1) \P^i(\mathrm d\o_1^i) \ge S \ge D(c^{1:N}),
    \]
    which shows that $f_1^{1:N}$ is a dual optimizer as well as continuity from above on $\Mc_{1:N}^-$.

    \medskip \emph{Step 4} (Choquet): Finally, the last step follows again from the same arguments as in the proof of Step 3.4 of \Cref{thm:attain_bary}.

    \medskip \underline{Part B} ($\sigma$-compact $\Yc$):   
    It remains to verify the result for a general $\sigma$-compact path space $\Yc.$ 
    Let us without loss of generality assume that $c^i \geq 0$ and, for $n \in \N$ and $t \in \{1,\ldots,T\}$, let $\Yc^n_t \subseteq \Yc_t$ be compact with $\Yc^n_t \nearrow \Yc_t.$ Define $\xi_t^n : \Yc_{t} \longrightarrow \Yc^n_{t}$ by 
    \[
        \xi^n_t (y_{t})= 
        \begin{cases} 
            y_{t} &y_{t}  \in \Yc^n_{t},\\y^0_{t} &\text{otherwise,} 
        \end{cases}
    \] 
    where $y_0 \in \Yc^1$ is fixed.
    We set $\xi^n\coloneqq\xi^n_{1:T}:\Yc \longrightarrow \Yc^n.$
    We further note that the space $\Yc^n\coloneqq \Yc^n_{1:T}$ is compact, and, by \cite[Theorem 5.1]{BaBePa21}, so is the space $\Zc^n \subseteq \Zc,$ which corresponds to the canonical filtered path space for processes with paths in $\Yc^n.$ 
    We also have that for every $\xi^n_t,$ there exists a measurable map $\zeta^n_t: \Zc_{t} \longrightarrow \Zc^n_{t}$ such that the map $\zeta^n\coloneqq \zeta^n_{1:T}$ is uniquely determined by the following property: if $\Y^\nu= (\Zc, \Gc_T, (\Gc_t)_{t = 1}^T, \nu \otimes K_2 , \hat Y) \in {\rm FP}(\Yc),$ then 
    \[
        (\Zc, \Gc_T, (\Gc_t)_{t = 1}^T, \nu \otimes K_2 , \xi^n(\hat Y)) \sim (\Zc^n, \Gc_T^n, (\Gc_t^n)_{t = 1}^T, \zeta^n_{\#}(\nu\otimes K_2), \hat Y^n) \in {\rm CFP}(\Yc^n,1),
    \]
    where $(\Gc^n_t)_{t=1}^T$ is the canonical filtration on $\Zc^n.$
    Finally, we define $c^{i,n}: \O^i \times \Yc \longrightarrow \R$ by $c^{i,n}(\o^i,y)\coloneqq c^{i}(\o^i,\xi^n(y))$.
    It is clear that we have the pointwise convergence $ \lim_{n} c^{i,n} = c^i.$

\medskip    Clearly, we have that
    \begin{align*}
        \inf_{\Y \in {\rm CFP}(\Yc^n,1)} \sum_{i=1}^N \Ac\Wc_{c^{i}\vert_{\Yc^n}}(\X^i,\Y) =
        \inf_{\Y \in {\rm CFP}(\Yc,k)} \sum_{i=1}^N \Ac\Wc_{c^{i,n}}(\X^i,\Y)
        \ge \inf_{\Y \in {\rm CFP}(\Yc,k)} \sum_{i=1}^N \Ac\Wc_{c^{i}}(\X^i,\Y), 
    \end{align*} 
    where we denote by $c^{i}\vert_{\Yc^n}$ the restriction of $c^i$ to $\Xc^i \times \Yc^n.$  Similarly as in \Cref{prop:lsc_bary}, we can verify the weak duality
    \[ 
        \sup_{f_1^{1:N} \in \Phi^\Zc(c^{1:N},k)} \sum_{i=1}^N \int_{\O^i_1} f^i_1(\o^i_1) \P^i(\mathrm{d}\o^i_1) 
        \leq \inf_{\Y \in {\rm CFP}(\Yc,k)} \sum_{i=1}^N \Ac\Wc_{c^i}(\X^i,\Y).
    \]

    It remains to show equality and attainment.    
    Let $f_1^{1:N,n} \in \Phi^{\Zc^n}(c_{|\Yc^n}^{1:N},k_{|\Yc^n})=\Phi^{\Zc^n}(c_{|\Yc^n}^{1:N},1)$ be optimal for the problem 
    \[
        \sup_{f_1^{1:N} \in \Phi^{\Zc^n}(c^{1:N}_{\vert {\Yc^n}},k_{|\Yc^n})} \sum_{i=1}^N \int_{\O^i_1} f^i_1(\o^i_1) \P^i(\mathrm{d}\o^i_1).
    \]
    We claim that $f_1^{1:N,n} \in \Phi^\Zc(c^{1:N,n},k)$.
    To this end, consider $(\tilde f_{2:T}^{1:N,n},\tilde g_{1:T}^{1:N,n}) \in \Ac_{1:N,2:T}^{\Zc^n}\times  \Bc_{1:N,1:T}^{\Zc^n}$ be such that
    \[ 
        f_1^{i,n}(\o^i_1) + \tilde g_1^{i,n}(z_1^n) + \sum_{t = 2}^T \Big[ \tilde f_t^{i,n}(\o^i_{1:t},z^n_{1:t-1}) + \tilde g_t^{i,n}(\o^i_{1:t-1}, z^n_{1:t}) \Big] \le c^{i}_{\vert{\Yc^n}}(\o^i,y^n) \text{ and }
            \sum_{i = 1}^N g^{i,n}_1(z_1^n) = 0,
    \]
    for all $(\o^{1:N},z^n) \in \O^{1:N} \times \Zc^n$.
    For $(\o^{1:N},z) \in \O^{1:N} \times \Zc$, we set
    \[ 
        f_t^{i,n}(\o^i_{1:t},z_{1:t-1}) \coloneqq \tilde f_t^{i,n}(\o^i_{1:t},\zeta^n (z_{1:t-1})) \text{ and } g_t^{i,n}(\o^i_{1:t},z_{1:t-1}) \coloneqq \tilde g_t^{i,n}(\o^i_{1:t},\zeta^n (z_{1:t-1})),
    \]
    which implies that
    \[ 
        f_1^{i,n}(\o^i_1) + g_1^{i,n}(z_1) + \sum_{t = 2}^T \Big[ f_t^{i,n}(\o^i_{1:t},z_{1:t-1}) +  g_t^{i,n}(\o^i_{1:t-1}, z_{1:t}) \Big] \le c^{i,n}(\o^i,y) 
        \text{ and }
        \sum_{i = 1}^N g^{i,n}_1(z_1) = 0,
    \]
    which shows our claim that $f_1^{1:N,n} \in \Phi^\Zc(c^{1:N,n},k)$.

    \medskip Consequently, we can apply \Cref{lem:step3.adapt_bary} to the sequence $(f_1^{1:N,n})_{n \in \N}$, which provides us with $f_1^{1:N} \in \Phi^\Zc(c^{1:N},k)$ such that
    \begin{multline*}
        \sum_{i = 1}^N \int_{\O^i_1} f_1^i(\o_1^i) \P^i(\mathrm d \o_1^i) \ge \lim_{n \to \infty} \sum_{i = 1}^N \int_{\O^i_1} f_1^{i,n}(\o_1^i) \P^i(\mathrm d\o_1^i) \\
        = \lim_{n \rightarrow \infty}\inf_{\Y \in {\rm CFP}(\Yc,k)} \sum_{i=1}^N \Ac\Wc_{c^{i,n}}(\X^i,\Y)
        \ge \inf_{\Y \in {\rm CFP}(\Yc,k)} \sum_{i=1}^N \Ac\Wc_{c^{i}}(\X^i,\Y).
    \end{multline*}
    The converse inequality follows from the weak duality and the proof is concluded.
\end{proof}

\begin{lemma} \label{lem:step3.adapt_bary}
   In the setting of {\rm \Cref{thm:adapt_bary}}, for every $(i,n) \in \{1, \dots, N\} \times \N$, let $c^{i,n} : \O^i \times \Yc \longrightarrow [0,\infty)$ be measurable such that $c^{i,n}(\o^i,y) \le \ell^i(\o^i) + k(y)$, $(\o^i,y) \in \O^i \times \Yc$, and $c^{1:N,n} \longrightarrow c^{1:N}$ holds pointwise.
    Further, for $n \in \N$, let $f_1^{1:N,n} \in \Phi^\Zc(c^{1:N,n},k)$.
    Then, there exists $f_1^{1:N} \in \Phi^\Zc(c^{1:N},k)$ such that
    \begin{equation}
        \label{eq:step3.adapted_bary.2}
        \sum_{i = 1}^N \int_{\O^i_1}f_1^{i}(\o^i_1) \P^i(\mathrm d\o^i) \ge \limsup_{n \to \infty} \sum_{i = 1}^N \int_{\O^i_1} f_1^i(\o^i_1) \P^i(\mathrm d\o^i).
    \end{equation}
\end{lemma}

\begin{proof}
    Without loss of generality, we can assume that
    \[
        \limsup_{n \to \infty} \sum_{i = 1}^N \int_{\O^i_1}f_1^{i,n}(\o^i_1) \P^i(\mathrm d\o^i) =
        \lim_{n \to \infty} \sum_{i = 1}^N \int_{\O^i_1}f_1^{i,n}(\o^i_1) \P^i(\mathrm d\o^i) \eqqcolon S.
    \]
    By definition of $\Phi^\Zc(c^{1:N,n},k)$, there are functions  $f_{2:T}^{1:N} \in \Ac_{1:N,2:T}^\Zc$ and   $g_{1:T}^{1:N} \in \Bc_{1:N,1:T}^\Zc$ such that for $(\o^i,z) \in \O^i \times \Zc(k)$
    \begin{equation}
        \label{eq:AWbary.1}
        f_1^{i,n}(\o^i_1) + g_1^{i,n}(z_1) + \sum_{t = 2}^T \Big[ f_t^{i,n}(\o^i_{1:t},z_{1:t-1}) + g_t^{i,n}(\o^i_{1:t-1}, z_{1:t}) \Big] \le c^{i,n}(\o^i,y) \text{ and }
        \sum_{i = 1}^N g^{i,n}_1(z_1) = 0.
    \end{equation}
    By shifting $f^{i,n}_1$ and $g^{i,n}_1$ by constants, we can assume that $\int_{\O^i_1} f^{i,n}_1 \mathrm{d}\P^i = \int_{\O^j_1} f^{j,n}_1 \mathrm{d}\P^j$ for all $(i,j,n) \in \{1,\dots,N\}^2 \times \N$.

    \medskip Analogous to the same step in the proof of \Cref{lem:step3.attain_bary},
    we have that the sequence $(f_1^{i,n})_{n \in \N}$ is bounded in $\L^1(\Fc_1^i,\P^i)$. Thus, we can assume without loss of generality, by passing to the Ces\`aro means of a suitable subsequence, that for every $i \in \{1,\dots,N\}$
    $f_1^{i,n} \longrightarrow f_1^i$ $\P^i$--a.s.\ and $f_1^i \in \L^1(\Fc_1^i, \P^i)$ satisfy \eqref{eq:step3.adapted_bary.2}.
    It remains to show admissibility, that is, to show $f_1^{1:N} \in \Phi^\Zc (c^{1:N},k)$.

    \medskip \emph{Step 1} (Construction of $g_1^{1:N}$):
    To find suitable $g_1^{1:N} \in \Bc^\Zc_{1:N,1}$ with $\sum_{i = 1}^N g_1^i = 0$, we integrate the inequality on the left-hand side in \eqref{eq:AWbary.1} with respect to $\P^i$ and $K_2$, and derive the bound
    \[
        g_1^{1,n}(z_1) \leq \int_{\O^i} \ell^i(\o^i) \P^i(\mathrm d\o^i) + \int_{\Zc} k(y) K_2(z_1;\mathrm dz_{2:T}) -\inf_{k \in \N} \int_{\O^i_1} f_1^{1,k}(\o^i_1) \P^i(\mathrm d\o^i_1).
    \] 

    Note that the right-hand side is well defined and finite for every $z_1 \in \Zc_1(k)$.
    By \Cref{komlos}, the sequence $(g^{1:N,n}_1)_{n \in \N}$ admits a subsequence of forward convex combinations that depend measurably on $z_1$ and converge pointwise for every $z_1 \in \Zc_1(k).$ Taking convex combinations of all the other dual potentials doesn't interfere with their respective properties.

    \medskip \emph{Step 2} (Construction of martingale compensators):
    Observe that the only constraint that is coupling, for $i \in \{1,\ldots,N\}$, the inequalities in \eqref{eq:AWbary.1}, is the congruency condition $\sum_{i=1}^N g_1^{i,n}(z_1)=0.$
    In the previous step we have constructed suitable sequences such that $g_1^{1:N,n} \longrightarrow g_1^{1:N}$ pointwise on $\Zc_1(k)$ and the limits satisfy the congruency condition.
    Hence, for the rest of the proof these inequalities completely decouple, which allows us to use the same construction as in in Step 3.2 and Step 3.3 of \Cref{existDualBC} and thereby find suitable martingale compensators $f^{1:N}_{2:T}$ and $g^{1:N}_{2:T}$.

    \medskip We conclude that $f_1^{1:N} \in \Phi^\Zc(c^{1:N},k)$, which completes the proof 
\end{proof}

\appendix
\section{Appendix} \label{sec:appendix}

\begin{lemma} \label{komlos} 
Let $(\Xc,\Fc^\Yc)$, $(\Yc,\Fc^\Yc)$ be standard Borel spaces.
Let $Y^n :\Xc \times \Yc \longrightarrow \R$, $n \in \N$, be measurable. Let further $\P : \Xc \longrightarrow \Pc(\Yc)$ and $C : \Xc \times \Yc \longrightarrow \R$ be measurable and such that $\inf_{n \in \N} Y^n(x,\,\cdot\,) \geq C(x,\,\cdot\,)$ $\P(x)$--a.s.\ for every $x \in \Xc.$
Then, there are measurable maps $\tY^n : \Xc \times \Yc \longrightarrow \R$, $n \in \N$, and $\tY : \Xc \times \Yc \longrightarrow \R \cup \{+ \infty \}$ satisfying: 
\begin{enumerate}[label = (\roman*)]
\item \label{f}
    there are measurable functions $\lambda_k^n : \Xc \longrightarrow [0,1]$, $(k,n) \in \N^2$, with $\sum_{k \in \N} \lambda_k^n = 1$ such that 
    \[
        \tY^n(x,\,\cdot\,) = \sum_{k \in \N} \lambda_k^n (x) Y^k(x,\,\cdot\,) \in {\rm conv} \left( Y^n(x,\,\cdot\,), Y^{n+1}(x,\,\cdot\,),\ldots \right), \quad x \in \Xc,
    \] 
    where, for random variables $Z^1,Z^2,\ldots$, we denote \[{\rm conv} ( Z^1,Z^2,\ldots )\coloneqq \Big\{ \sum_{k=1}^N \lambda_k Z^k \,\Big\vert\, N \in \N,\; \lambda_k \in [0,1],\;\sum_{k=1}^N \lambda_k = 1 \Big\};\]
\item \label{s}
    for every $x \in \Xc$, we have $\displaystyle \lim_{n \to \infty} \tY^n(x,y) = \tY(x,y)\;\text{ for }\P(x)$--almost every $y \in \Yc.$
\end{enumerate}
\end{lemma}
\begin{proof} The proof follows similar steps as \citeauthor*{DeSc06} \cite[Lemma 9.8.1]{DeSc06}. We have to verify that each step preserves measurability of the involved functions. For the most part of the proof, we will suppress dependencies on $y \in \Yc$. Since $(\Xc,\Fc^\Yc)$, $(\Yc,\Fc^\Yc)$ are standard Borel spaces, there exists a Polish topologies generating $\Fc^\Yc$ and $\Fc^\Yc,$ respectively. We can hence assume without loss of generality that $\Xc$ and $\Yc$ are Polish.
We can further without loss of generality assume that $C=0$, otherwise we work with $Y^{\prime,n}\coloneqq Y^n-C$ instead of $Y^n$. 
For $(x,n) \in \Xc \times \N$, we define
\[ I^n(x)\coloneqq\inf \left\lbrace \E_{\P(x)} \left[ \exp(-Y) \right] \,\vert\, Y \in \text{conv} \left( Y^n(x), Y^{n+1}(x),\ldots \right)\right\rbrace.  \]
Note that $0\leq I^n(x)\leq 1$ since $Y^n(x,\,\cdot\,) \geq 0$ $\P(x)$--a.s.\ for every $x \in \Xc.$ We claim that for every $n \in \N$, there are measurable weights $\lambda_k^n : \Xc \longrightarrow [0,1]$, $k \in \N$, such that for $x\in \Xc$, $\lambda_n^k(x) \neq 0$ for finitely many $k$, $\sum_{k \in \N} \lambda^n_k = 1$ and $\Yb^n(x) \coloneqq \sum_{k \in \N} \lambda_k^n(x) Y^k(x)$ satisfies
\[
    \E_{\P(x)} \left[ \exp\left(-\Yb^n(x)\right) \right] \leq I^n(x) + \frac{1}{n}.
\] 
To that end, we endow the space of null sequences $c_0\coloneqq\lbrace \lambda=(\lambda_0,\lambda_1,\ldots) \in \R^\N : \lim_{i \rightarrow \infty} \lambda_i=0 \rbrace$ with the supremum norm $\lVert\lambda \rVert_\infty=\sup_{i \in \N \cup \{0\}} \lvert \lambda_i \rvert$.
It is well-known that $(c_0,\|\cdot \|_\infty)$ is complete and separable. 
Further, we consider the subspace of sequences that are eventually constant zero
\[ 
    c_{00}^1\coloneqq\bigcup_{\ell=0}^\infty \Sc_\ell,
\] 
where 
\[ 
    \Sc_\ell=\bigg\lbrace \lambda=(\lambda_0,\lambda_1,\ldots) \in c_0 \,\bigg\vert\, \lambda_i \in [0,1],\; i \in \N \cup \{0 \},\; \lambda_i=0 \text{ for } i>\ell,\;\sum_{i=0}^\ell \lambda_i=1 \bigg\rbrace.  
\] 
It is clear that, for every $\ell \in \N \cup \{0\}$, $\Sc_\ell$ is a compact subspace of $c_0$, hence $c_{00}^1$ is measurable and $\sigma$-compact.
For every $n \in \N$, we let $f^n: \Xc \times c_{00}^1 \longrightarrow [0,1]$ be given by
\[ 
    f^n(x,\lambda)\coloneqq\E_{\P(x)} \bigg[ \exp\Big(-\sum_{i=0}^\infty \lambda_i Y^{n+i}(x)\Big) \bigg]. 
\] 
Since $\lambda_i \neq 0$ for only finitely many indices, the sum $\sum_{i=0}^\infty \lambda_i Y^{n+i}(x)$ is always well-defined and finite.
Observe that $f^n$ is measurable in $x$ and continuous in $\lambda$. Clearly, for $x \in \Xc$
\[ 
    I^n(x)=\inf \left\lbrace f^n(x,\lambda) \,\big\vert\,\lambda \in c_{00}^1  \right\rbrace.
\]
Hence, we find a measurable selection by invoking the Arsenin--Kunugui theorem, see \emph{e.g.}\ \cite[Theorem 18.18]{Ke95},
to conclude that, for every $n \in \N$, there exists a measurable $(1/n)$-minimizer $\tilde{\lambda}^n : \Xc \longrightarrow c_{00}^1$ such that 
\[ 
    \E_{\P(x)} \bigg[ \exp\Big(-\sum_{i=0}^\infty \tilde{\lambda}_i^n(x) Y^{n+i}(x)\Big) \bigg]\leq I^n(x)+\frac{1}{n}.
\]
For fixed $x \in \Xc$, we can show in the very same way as in \cite{Komlos}, that the sequence \[\exp\bigg(-\sum_{i=0}^\infty \tilde{\lambda}_i^n(x) Y^{n+i}(x)\bigg),\; n \in \N, \] is Cauchy in $\L^1(\Bc(\Yc),\P(x))$ and hence admits a limit there. We now prove $\ref{f}$ and $\ref{s}$ to show that the limit is indeed measurable. 
Successively, we define
\begin{align*}
n_1(x)&\coloneqq\text{argmin}
\bigg\lbrace n \in \N \, \bigg\vert\, \forall \ell \geq n : 
\E_{\P(x)}\bigg[ \Big\lvert \exp\Big(-\sum_{i=0}^\infty \tilde{\lambda}_i^{\ell}(x) Y^{\ell+i}(x)\Big)-\exp\Big(-\sum_{i=0}^\infty \tilde{\lambda}_i^{n}(x) Y^{n+i}(x)\Big) \Big\rvert \bigg] \leq 2^{-2} \bigg\rbrace,
\end{align*}
and, for $k \in \N$, $k \ge 2$,
\begin{align*}
n_{k}(x)&\coloneqq\text{argmin} \bigg\lbrace n \geq (n_{k-1}(x) \wedge k) \, \Big\vert \, \forall \ell \geq n : \\ & \qquad \qquad \E_{\P(x)}\bigg[ \Big\lvert \exp\Big(-\sum_{i=0}^\infty \tilde{\lambda}_i^{\ell}(x) Y^{\ell+i}(x)\Big)-\exp\Big(-\sum_{i=0}^\infty \tilde{\lambda}_i^{n}(x) Y^{n+i}(x)\Big) \Big\rvert\bigg] \leq 2^{-2k} \bigg\rbrace.
\end{align*}
It is evident from the construction that, for every $k \in \N$, $x \longmapsto n_k(x)$ is measurable, and consequently, 
\[ 
    (x,y) \longmapsto \exp\Big(-\sum_{i=0}^\infty \tilde{\lambda}_i^{n_k(x)}(x) Y^{n_k(x)+i}(x,y)\Big) 
\] 
is also measurable. 
Using the Borel-Cantelli lemma we conclude that for every $x \in \Xc$:
\[ \P(x)\bigg[ \limsup \bigg\lbrace \Big\lvert \exp\Big(-\sum_{i=0}^\infty \tilde{\lambda}_i^{n_k(x)}(x) Y^{n_k(x)+i}(x)\Big)-\exp\Big(-\sum_{i=0}^\infty \tilde{\lambda}_i^{n_{k+1}(x)}(x) Y^{n_{k+1}(x)+i}(x)\Big) \Big\rvert > 2^{-k}  \bigg\rbrace \bigg]=0. \] 
As a consequence we have
\[ \P(x)\bigg[ \bigg\lbrace \exists N \in \N \,\forall k \geq N :  \Big\lvert \exp\Big(-\sum_{i=0}^\infty \tilde{\lambda}_i^{n_k(x)}(x) Y^{n_k(x)+i}(x)\Big)-\exp\Big(-\sum_{i=0}^\infty \tilde{\lambda}_i^{n_{k+1}(x)}(x) Y^{n_{k+1}(x)+i}(x)\Big) \Big\rvert < 2^{-k}  \bigg\rbrace \bigg]=1. \] 
This means that, for every $x \in \Xc$, the sequence
\[ 
    \exp\Big(-\sum_{i=0}^\infty \tilde{\lambda}_i^{n_k(x)}(x) Y^{n_k(x)+i}(x)\Big),\quad k \in \N, 
\]
admits a $\P(x)$--almost-sure limit in $[0,1]$, whence,
\[
    \tY^k(x)\coloneqq\sum_{i=0}^\infty \tilde{\lambda}_i^{n_k(x)}(x) Y^{n_k(x)+i}(x),\quad x \in \Xc, 
\] 
has a limit in $[0,\infty]$.
Recall that by assumption $(Y^n)_{n \in \N}$ is a sequence of jointly measurable functions on $\Xc \times \Yc$, from where we deduce that $\tY^k : \Xc \times \Yc \longrightarrow [0,\infty)$ is also jointly measurable.
Thus, $\tY \coloneqq\limsup_{k\rightarrow \infty} \tY^k$ is jointly measurable and we have, for each $x \in \Xc$,
\[
    \tY^k(x)=\sum_{i=0}^\infty \lambda^k_i(x) Y^{k+i}(x),
\] 
where the weights $\lambda^k$ are measurable and satisfy
\begin{align*}
\lambda^k_i(x)=\begin{cases}
\tilde{\lambda}^{n_k(x)}_{i+k-n_k(x)}(x),\; \text{if } i +k \geq n_k(x), \\
0,\; \text{otherwise.}
\end{cases}
\end{align*}
This concludes the proof.
\end{proof}

\begin{lemma} \label{qs} 
    Let $\Xc$, $\Yc$ be Polish spaces and let $\Q \in \Pc(\Xc \times \Yc)$ disintegrate as $\Q(\mathrm{d}x,\mathrm{d}y) = \P_1(\mathrm{d}x) \otimes \P(x)(\mathrm{d}y)$ for some $\P_1 \in \Pc(\Xc)$ and measurable $\P: \Xc \longrightarrow \Pc(\Yc)$.
    Let $Y^k, Y : \Xc \times \Yc \longrightarrow \R$, $k \in \N$, be measurable functions such that for $\P_1$--{\rm a.e.}\ $x$ holds
    \[
        \lim_{k \to \infty} Y^k(x,\,\cdot\,) = Y(x,\,\cdot\,) \quad \P(x)\text{{\rm--a.s.}}
    \]
    Then $\{ (x,y) \in \Xc \times \Yc :  \limsup_{k \to \infty} Y^k(x,y) \neq Y(x,y) \text{ or }  \liminf_{k \to \infty} Y^k(x,y) \neq Y(x,y) \}$ is a $\Q$-null set.
\end{lemma}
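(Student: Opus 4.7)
The plan is to identify the exceptional set $A = \{(x,y) : \limsup_k f^k(x,y) > \liminf_k f^k(x,y)\}$ as a product-measurable subset of $\Xc \times \Yc$ and then integrate its sections against $\mu$ via the disintegration of $\pi$. Since $\limsup_k f^k$ and $\liminf_k f^k$ are Borel-measurable as pointwise upper/lower limits of the Borel-measurable sequence $(f^k)$, the set $A$ lies in $\Bc(\Xc \times \Yc)$.

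For each fixed $x \in \Xc$ the $x$-section $A_x = \{y \in \Yc : \limsup_k f^k(x,y) > \liminf_k f^k(x,y)\}$ is contained in the set of $y$ for which $(f^k(x,y))_{k \in \N}$ fails to converge. By hypothesis, there exists a $\mu$--null set $N \subset \Xc$ such that for every $x \in \Xc \setminus N$ one has $f^k(x,\cdot) \to f(x,\cdot)$ $\pi_x$--a.s., and in particular $\pi_x(A_x) = 0$.

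Applying the disintegration $\pi(\mathrm{d}x,\mathrm{d}y) = \mu(\mathrm{d}x) \otimes \pi_x(\mathrm{d}y)$ together with Fubini's theorem (the map $x \mapsto \pi_x(A_x)$ is $\mu$--measurable since $A$ is jointly measurable and $x \mapsto \pi_x$ is a measurable kernel) yields
\[
    \pi(A) = \int_{\Xc} \pi_x(A_x) \, \mu(\mathrm{d}x) = \int_{\Xc \setminus N} 0 \, \mu(\mathrm{d}x) = 0,
\]
which is the claim. There is no real obstacle here beyond invoking the standard measurability of $\limsup$/$\liminf$ and of the kernel $x \mapsto \pi_x$; the statement is essentially a Fubini-style upgrade of fiberwise almost-sure convergence to joint quasi-sure convergence.
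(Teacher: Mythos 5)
Your argument is correct and is exactly the paper's proof, just written out in more detail: the paper likewise observes that the non-convergence set is jointly measurable and concludes via Fubini's theorem applied to the disintegration $\pi(\mathrm{d}x,\mathrm{d}y)=\mu(\mathrm{d}x)\otimes\pi_x(\mathrm{d}y)$. Nothing further is needed.
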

    
\begin{proof}
    Since $\{ (x,y) \in \Xc \times \Yc :  \limsup_{k \to \infty} Y^k(x,y) \neq Y(x,y) \text{ or }  \liminf_{k \to \infty} Y^k(x,y) \neq Y(x,y) \}$ is measurable, the claim follows directly from Fubini's theorem.
\end{proof} 

We prove a multidimensional version of the Choquet capacitability theorem \cite[Proposition 2.1]{BaChKu19} and refer for further details to \cite[Section 2]{BaChKu19}. 
To this end, let us consider the following setting. For every $i \in \{1,\ldots,N\},$ let $H^i \subseteq G^i$ be two sets of functions from $\Xc^i$ to $[-\infty,\infty].$ Assume that $H^i$ is a lattice and $G^i$ contains all suprema of increasing sequences in $G^i$ as well as infima of all sequences in $G^i.$ Denote the set of all infima of sequences in $H^i$ by $H^i_\delta.$ Let $\Phi : \prod_{i=1}^N G^i \longrightarrow [-\infty, \infty]$ be a mapping which is increasing in all entries and let us extend $\Phi$ for arbitrary functions $Y^i: \Xc^i\longrightarrow [-\infty,\infty],$ $i \in \{1,\ldots,N\},$ by \[ \hat{\Phi}(Y^1,\ldots,Y^N)\coloneqq\inf \{ \Phi(X^1,\ldots,X^N) \,\vert\, X^i \leq Y^i,\; Y^i \in  G^i \}.\]
We have the following result.
\begin{lemma} \label{lem:choquet} Let $\Phi: \prod_{i=1}^N G^i \longrightarrow \R$ be increasing in all entries and assume that
\begin{enumerate}[label = (\roman*)]
\item $\lim_{n \rightarrow \infty} \Phi(X^1_n,\ldots,X^N_n)=\Phi(\lim_{n \rightarrow \infty}X^1_n,\ldots,\lim_{n \rightarrow \infty}X^N_n)$ for any $N$-tuple of decreasing sequences $(X_n^i)_{n \in \N}$ in $H^i,$ $i \in \{1,\ldots,N\};$
\item $\lim_{n \rightarrow \infty} \Phi(X^1_n,\ldots,X^N_n)=\Phi(\lim_{n \rightarrow \infty}X^1_n,\ldots,\lim_{n \rightarrow \infty}X^N_n)$ for any $N$-tuple of increasing sequences $(X_n^i)_{n \in \N}$ in $G^i,$ $i \in \{1,\ldots,N\}.$
\end{enumerate}
Then $\hat{\Phi}(X^1,\ldots,X^N)=\sup \{ \Phi(Y^1,\ldots,Y^N) \,\vert\, Y^i \leq X^i, \; Y^i \in H^i_{\delta} \}$ for any $H^i$-Suslin functions $X^i,$ $i \in \{1,\ldots,N\}.$
\end{lemma}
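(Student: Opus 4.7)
\medskip
The plan is to reduce the multidimensional statement directly to a single application of the one-dimensional Choquet capacitability theorem \cite[Proposition 2.1]{BaChKu19}, via a bundling reformulation. Work on the disjoint union $\tilde{\Xc} \coloneqq \bigsqcup_{i=1}^N \Xc^i$, which is again Polish, and identify an $N$-tuple $(f^1,\ldots,f^N)$ of functions $f^i : \Xc^i \to [-\infty,\infty]$ with a single function $\tilde{f} : \tilde{\Xc} \to [-\infty,\infty]$ whose restriction to the $i$-th component equals $f^i$. Set $\tilde{H} \coloneqq \prod_{i=1}^N H^i$ and $\tilde{G} \coloneqq \prod_{i=1}^N G^i$ under this identification. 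Since each $H^i$ is a lattice, so is $\tilde{H}$ under coordinate-wise operations; since $G^i$ is closed under suprema of increasing sequences and infima of sequences of its own elements, so is $\tilde{G}$. The functional $\Phi : \tilde{G} \to [-\infty,\infty]$ is monotone, assumption $(i)$ is exactly the continuity of $\Phi$ from above along decreasing sequences in $\tilde{H}$, assumption $(ii)$ is the continuity from below along increasing sequences in $\tilde{G}$, and the outer extension $\hat{\Phi}$ defined in the statement matches the 1D outer extension of $\Phi$ from $\tilde{G}$ to arbitrary functions on $\tilde{\Xc}$.

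\medskip
The key structural observation is that if each $X^i$ is $H^i$-Suslin, then the corresponding $\tilde{X}$ is $\tilde{H}$-Suslin. Writing $X^i = \sup_{\sigma \in \N^{\N}} \inf_{n} f^i_{\sigma \vert n}$ with $f^i_s \in H^i$ for each $i$, a bijection $\N^{\N} \cong (\N^{\N})^N$ together with a relabelling of the finite index sets allows us to rewrite the coordinate-wise Suslin operations as a single Suslin operation on elements of $\tilde{H}$, producing $\tilde{X}$. Moreover, $\tilde{H}_\delta = \prod_{i=1}^N H^i_\delta$, because the infimum of a sequence of tuples in $\tilde{H}$ is, coordinate-wise, the infimum of a sequence in $H^i$ (and conversely any such coordinate-wise infima can be assembled to an infimum of a single sequence of tuples).

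\medskip
With these identifications in place, the one-dimensional Choquet capacitability theorem applied to $\Phi$ viewed as a capacity on the pair $(\tilde{H}, \tilde{G})$ and to the $\tilde{H}$-Suslin function $\tilde{X}$ yields $\hat{\Phi}(\tilde{X}) = \sup\{\Phi(\tilde{Y}) : \tilde{Y} \leq \tilde{X},\, \tilde{Y} \in \tilde{H}_\delta\}$. Translating back via the identifications above, this is exactly $\hat{\Phi}(X^1,\ldots,X^N) = \sup\{\Phi(Y^1,\ldots,Y^N) : Y^i \leq X^i,\ Y^i \in H^i_\delta\}$, which is the desired equality. The main obstacle is the combinatorial verification that coordinate-wise $H^i$-Suslin tuples are $\tilde{H}$-Suslin; once the Baire-space bijection $\N^{\N} \cong (\N^{\N})^N$ is fixed and the finite index sequences are relabelled accordingly, the remaining checks (lattice and monotone-class properties of $\tilde{H}, \tilde{G}$, and the match of assumptions $(i), (ii)$ with 1D continuity) are routine.
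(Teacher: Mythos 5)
Your proposal is correct and follows essentially the same route as the paper: the paper also reduces to a single application of the one-dimensional Choquet capacitability theorem by bundling the $N$ function classes into one, working with functions on $\{1,\ldots,N\}\times\prod_{i=1}^N\Xc^i$ that depend only on $(i,x^i)$ — which is the same device as your disjoint union $\bigsqcup_{i=1}^N\Xc^i$. The verifications you flag (lattice/closure properties, $\tilde H_\delta=\prod_i H^i_\delta$, and that coordinate-wise Suslin tuples are Suslin for the bundled class) are exactly the ones the paper carries out or asserts.
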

\begin{proof} The statement is a direct corollary of \cite[Proposition 2.1]{BaChKu19}. Let us define sets of functions from $\{1,\ldots,N\} \times (\prod_{i=1}^N\Xc^i)$ to $[-\infty,\infty]$ by
\begin{equation*}
    H \coloneqq \{ X(i,x^1,\ldots,x^N) = X^i(x^i) \,\vert\, X^i \in H^i \}, \quad {\rm and}\quad  G \coloneqq \{ Y(i,x^1,\ldots,x^N) = Y^i(x^i) \,\vert\, Y^i \in G^i \}.
\end{equation*}      
It is then easy to verify that $H \subseteq G,$ $H$ is a lattice and $G$ contains all suprema of increasing sequences in $G$ as well as infima of all sequences in $G.$ Moreover, let us define $\Psi : G \longrightarrow [-\infty, \infty]$ and $\hat{\Psi}: H \longrightarrow [-\infty, \infty]$ by \[ \Psi(X)\coloneqq \Phi(X(1,\cdot),\ldots,X(N,\cdot)),\quad{\rm resp.}\quad \hat{\Psi}(Y)\coloneqq\inf \{ \Psi(X) : X \leq Y,\; Y \in  G \}.\] It is then clear that
\begin{multline*}\hat{\Psi}(Y)=\inf \{ \Phi(X(1,\cdot),\ldots,X(N,\cdot)) \,\vert\, X(i,\cdot) \leq Y(i,\cdot),\;i \in \{1,\ldots,N\},\; X \in  G \}\\ =\inf \{ \Phi(X^1,\ldots,X^N) \,\vert\, X^i \leq Y(i,\cdot),\;i \in \{1,\ldots,N\},\; X^i \in  G^i \}=\hat{\Phi}(Y(1,\cdot),\ldots,Y(N,\cdot)).
\end{multline*} Applying the Choquet capacitability theorem \cite[Proposition 2.1]{BaChKu19} to the functional $\Psi,$ we conclude
\[ \hat{\Psi}(X)=\sup \{ \Phi(Y) \,\vert\, Y \leq X, \; Y \in H_{\delta} \},\quad \text{for any $H$-Suslin function $X$},\] where $H_\delta$ denotes the set of all infima of sequences in $H.$
For every $i \in \{1,\ldots,N\},$ let $X^i$ be $H^i$-Suslin functions and set $X(i,x^1,\ldots,x^N) \coloneqq X^i(x^i)$. Then $X$ is $H$-Suslin and we have
\begin{align*}
 \hat{\Psi}(X)&=\hat{\Phi}(X(1,\cdot),\ldots,X(N,\cdot))=\hat{\Phi}(X^1,\ldots,X^N),\\
 \sup \{ \Phi(Y) \,\vert\, Y \leq X, \; Y \in H_{\delta} \}&=\sup \{ \Phi(Y(1,\cdot),\ldots,Y(N,\cdot)) \,\vert\, Y(i,\cdot) \leq X(i,\cdot), \; Y \in H_{\delta} \}\\
 &=\sup \{ \Phi(Y^1,\ldots,Y^N) \,\vert\, Y^i \leq X^i, \; Y^i \in H^i_{\delta} \},
\end{align*} where the last equality follows from the fact that $Y \in H_\delta$ if and only if $Y(i,\,\cdot\,) \in H^i_\delta,$ $i \in \{1,\ldots,N\}.$ This concludes the proof.
\end{proof}

\bibliography{refs}

\section*{Statements and Declarations}
 Daniel Kr\v{s}ek gratefully acknowledges the support of the SNF project MINT 205121-219818. Other than that, the authors declare that no funds, grants, or other support were received during the preparation of this manuscript and the authors have no relevant financial or non-financial interests to disclose.
\end{document}